\newfont{\bb}{msbm10 at 11pt}
\newfont{\bbsmall}{msbm8 at 8pt}
\def\rth{\mathbb{R}^3}
\def\R{\mathbb{R}}
\def\B{\mathbb{B}}
\def\N{\mathbb{N}}
\def\C{\mathbb{C}}
\def\Hip{\mathbb{H}}
\def\D{\mathbb{D}}
\def\esf{\mathbb{S}}
\newcommand{\ben}{\begin{enumerate}}
\newcommand{\bit}{\begin{itemize}}
\newcommand{\een}{\end{enumerate}}
\newcommand{\eit}{\end{itemize}}
\newcommand{\wh}{\widehat}
\newcommand{\wt}{\widetilde}
\newcommand{\HH}{\mbox{\bb H}}
\def\a{{\alpha}}
\def\lc{{\cal L}}
\def\t{{\theta}}
\def\g{{\gamma}}
\def\G{{\Gamma}}
\def\l{{\lambda}}
\def\L{{\Lambda}}
\def\de{{\delta}}
\def\be{{\beta}}
\def\ve{{\varepsilon}}
\def\centerbmp#1#2#3{\vskip#2\relax\centerline{\hbox to#1{\special
    {bmp:#3 x=#1, y=#2}\hfil}}}
\newtheorem{theorem}{Theorem}[section]
\newtheorem{lemma}[theorem]{Lemma}
\newtheorem{proposition}[theorem]{Proposition}
\newtheorem{remark}[theorem]{Remark}
\newtheorem{corollary}[theorem]{Corollary}
\newtheorem{definition}[theorem]{Definition}
\newtheorem{conjecture}[theorem]{Conjecture}
\newtheorem{assertion}[theorem]{Assertion}
\newenvironment{proof}{\smallskip\noindent{\it Proof.}\hskip \labelsep}
{\hfill\penalty10000\raisebox{-.09em}{$\Box$}\par\medskip}
\begin{document}
\begin{title}
{Local removable singularity theorems for minimal laminations}
\end{title}
\vskip .2in

\begin{author}
{William H. Meeks III\thanks{This material is based upon
   work for the NSF under Award No. DMS -
   1004003.  Any opinions, findings, and conclusions or recommendations
   expressed in this publication are those of the authors and do not
   necessarily reflect the views of the NSF.}
   \and Joaqu\'\i n P\' erez
\and Antonio Ros\thanks{The second and third authors were supported in part
by MEC/FEDER grants no. MTM2007-61775 and MTM2011-22547, and
Regional J. Andaluc\'\i a grant no. P06-FQM-01642.}}
\end{author}
\maketitle
\begin{abstract}
In this paper we prove a {\it local removable singularity theorem}
for certain minimal laminations with isolated singularities in a
Riemannian three-manifold. This removable singularity theorem
is the key result used in our proof that a
complete, embedded minimal surface in $\R^3$ with quadratic decay of
curvature has finite total curvature.
\vspace{.3cm}

\noindent{\it Mathematics Subject Classification:} Primary 53A10,
   Secondary 49Q05, 53C42

\noindent{\it Key words and phrases:} Minimal surface,
stability, curvature estimates, finite total curvature, minimal lamination,
removable singularity, limit tangent cone.
\end{abstract}
\noindent

\section{Introduction.}

There is a vast literature concerning the (local) analysis of
mathematical objects around an isolated singularity. In this paper we shall be
concerned with the local behavior of an embedded minimal surface, or more generally,
of a minimal lamination ${\cal L}$ of a punctured ball in a Riemannian three-manifold,
with the central question of when such an ${\cal L}$ extends as lamination across the
puncture. We will characterize the removability of the
possible singularity of the closure of ${\cal L}$ at the puncture in
terms of the growth of the norm of the second fundamental form of
the leaves of ${\cal L}$ when approaching the puncture; see the
Local Removable Singularity Theorem (Theorem~\ref{tt2}) below
for this characterization.

Before stating the Local Removable Singularity Theorem, we set some specific
notation to be used throughout the paper. Given a three-manifold $N$ and
a point $p\in N$, we denote by $d_N$ the distance function in $N$ to $p$
and by $B_N(p,r)$ the open metric ball of center $p$ and radius $r>0$.
For a lamination  $\lc$ of $N$ and a leaf $L$ of
${\cal L}$, we denote by $|\sigma _{L}|$
the norm of the second fundamental form of $L$. Since
leaves of ${\cal L}$ do not intersect, it makes sense
to consider the norm of the second fundamental form as a
function defined on the union of the leaves of
${\cal L}$, which we denote by  $|\sigma _{{\cal L}}|$.
In the case $N=\R^3$, we use the notation $\B (p,r)=B _{\R^3}(p,r)$ and
$\B (r)=\B (\vec{0},r)$.  The boundary and closure of $\B (r)$ will be
respectively denoted by $\partial \B (r)=\esf^2(r)$ and
$\overline{\B }(r)$. $\esf^1(r)$ represents the circle
$\{ (x_1,x_2)\ | \ x_1^2+x_2^2=r^2\} \subset \R^2$. Furthermore,
$R\colon\R^3\to \R $ will stand for the distance function to
the origin $\vec{0}\in \R^3$. Finally, for a surface
$M\subset \R^3$, $K_M$ denotes its Gaussian curvature function.

\begin{theorem} [Local Removable Singularity Theorem]
\label{tt2}
A minimal lamination $\lc$ of a punctured ball $B_N(p,r)-\{ p\} $
in a Riemannian three-manifold $N$ extends to a minimal lamination
of $B_N(p,r)$ if and only if there exists a positive constant $C$
such that $|\sigma _{{\cal L}}|\, d_N(p,\cdot )\leq C$ in some subball.
In particular under this hypothesis,
\begin{enumerate}
\item The curvature of  ${\cal L}$ is bounded in a neighborhood of $p$.
\item If ${\cal L}$ consists of a single leaf $M\subset B_N(p,r)-\{ p\} $
which is a properly embedded minimal surface, then
$M$ extends smoothly through $p$.
\end{enumerate}
\end{theorem}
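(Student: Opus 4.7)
The plan is to handle both directions of the equivalence. The direction ``extends $\Rightarrow |\sigma_{\lc}|\,d_N \leq C$'' is the routine one: if $\lc$ extends as a minimal lamination of $B_N(p,r)$, then the second fundamental forms of its leaves are uniformly bounded on the compactly contained subball $B_N(p,r/2)$ by the smoothness of the extended lamination, so multiplying by $d_N(p,\cdot) \leq r/2$ yields the stated bound. The content of the theorem is in the converse.

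Assume the scale-invariant hypothesis $|\sigma_{\lc}|\,d_N(p,\cdot) \leq C$ on $B_N(p,r_0)\setminus\{p\}$. I would first pass to normal coordinates at $p$ so that a neighborhood of $p$ is identified with a Euclidean ball and the metric $g_N$ is $C^2$-close to the Euclidean metric near $\vec{0}$, with the hypothesis becoming $|\sigma|\,R \leq C'$. The central idea is a blow-up argument exploiting the scale invariance: for a sequence $\lambda_n \to \infty$, the rescaled laminations $\lambda_n\lc$ live in balls of radius $\lambda_n r_0$ in $(\R^3,\lambda_n^2 g_N)$, still satisfy $|\sigma|\,R \leq C'$, and the ambient metrics converge to the Euclidean one. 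By the standard compactness theorem for minimal laminations with locally uniformly bounded curvature (applied away from the puncture), a subsequence of $\lambda_n\lc$ converges to a minimal lamination $\lc_\infty$ of $\R^3\setminus\{\vec{0}\}$ satisfying the same bound $|\sigma_{\lc_\infty}|\,R \leq C'$.

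The heart of the proof is to classify $\lc_\infty$ and derive conclusion~(1), namely that $|\sigma_{\lc}|$ is in fact bounded (not merely $|\sigma_{\lc}|\,d_N$). I would aim to show that $\lc_\infty$ extends across $\vec{0}$ as a minimal lamination of $\R^3$ whose leaves through $\vec{0}$ are planes: iterating the blow-up with further scales $\mu_n\to\infty$ or $\mu_n\to 0$ produces tangent objects that, by scale invariance, are cones; the only complete embedded minimal cones in $\R^3$ are flat planes through the vertex. This planar rigidity at infinity, combined with the maximum principle applied to disjoint leaves of $\lc_\infty$, forces the leaves of $\lc_\infty$ to be either planes through $\vec{0}$ or properly embedded minimal surfaces bounded away from $\vec{0}$. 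Translating this flatness information back through the convergence $\lambda_n\lc \to \lc_\infty$, one concludes that $|\sigma_{\lc}|$ cannot blow up at $p$ at a rate faster than any negative power of $d_N(p,\cdot)$, and in fact must remain bounded.

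Finally, once $|\sigma_{\lc}|$ is bounded near $p$, each leaf is locally the graph of a function satisfying the minimal surface equation over some tangent plane, with bounded gradient and second derivatives on a punctured disk. The isolated singularity at the center is then removable by classical elliptic PDE theory, giving a smooth extension of the leaf through $p$; this is conclusion~(2). These extensions assemble into a minimal lamination of $B_N(p,r)$ because $C^2$-limits of pairwise disjoint minimal surfaces remain pairwise disjoint (or coincide) by the maximum principle. The main obstacle I foresee is the classification step for $\lc_\infty$: one must rule out exotic minimal laminations of $\R^3\setminus\{\vec{0}\}$ with only a scale-invariant curvature bound, without any a~priori properness, completeness or stability of the individual leaves, and this is presumably where the limit tangent cone machinery advertised in the abstract is brought to bear.
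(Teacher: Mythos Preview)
Your overall architecture---blow up, classify the limit lamination $\lc_\infty$ in $\R^3\setminus\{\vec{0}\}$, transfer the conclusion back---matches the paper's, and you correctly locate the difficulty in the classification step. But the argument you sketch for that step does not work, and the paper's actual machinery here is substantially heavier than what you propose.

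Your claim that ``iterating the blow-up with further scales produces tangent objects that, by scale invariance, are cones'' is only justified for a single \emph{properly embedded} surface, because it rests on finite area and the monotonicity formula (this is exactly the paper's Case~I). For a general lamination $\lc_\infty$ of $\R^3\setminus\{\vec{0}\}$ with only the bound $|\sigma|\,R\le C$, you have no a~priori area control, no monotonicity, and hence no tangent-cone machinery. More seriously, the conclusion you draw from ``planar rigidity plus maximum principle''---that leaves are either planes through $\vec{0}$ or proper surfaces away from $\vec{0}$---is precisely the content of the paper's Proposition~4.11, whose proof occupies all of Section~4. The obstruction is a potential non-proper leaf $L\subset H^+$ limiting to the punctured plane $\{x_3=0\}\setminus\{\vec{0}\}$; ruling this out requires the Stability Lemma (a stable minimal surface complete outside a point is a plane) to force $\lim(L)$ to be that punctured plane, and then a delicate analysis of how $L$ meets small cones $C_\delta$ (the ``type~I / type~II'' dichotomy), resolved either by flux arguments invoking Colding--Minicozzi estimates or by a conformal/recurrence argument. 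Nothing like the maximum principle alone suffices.

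A second, smaller gap: once $|\sigma_{\lc}|$ is bounded, you invoke ``elliptic PDE removable singularity'' leaf by leaf. For a single proper leaf this is fine, but for a lamination you must also show the \emph{lamination structure} extends; the paper does this by first showing the limit leaf $F$ (which must exist in the non-proper case) extends to a smooth disk through $p$, then observing that all nearby leaves are graphical over $T_p\overline{F}$ with a fixed sign on the Jacobi function, hence stable, and finally applying Schoen's curvature estimates to get the lamination extension. Your sketch does not address why the accumulation pattern at $p$ remains a lamination.
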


We remark that the natural generalization of the above local
removable singularity theorem
fails badly for codimension-one minimal laminations of $\R^n$, for $n
= 2 $ and for $n > 3$. In the case $n= 2$, consider the cone ${\cal
C}$ over any two non-antipodal points on the unit circle; ${\cal C}$
consists of two infinite rays making an acute angle at the origin.
The punctured cone ${\cal C} - \{ \vec{0} \}$ is totally geodesic
and so, the norm of the second fundamental form of ${\cal
C}-\{\vec{0}\}$ is zero but ${\cal C}$ is not a smooth lamination at
the origin. In the case $n\geq 4$, let ${\cal C}$ denote the cone over
any embedded, compact minimal hypersurface $\Sigma $ in
$\esf^{n-1}$ which is not an equator.
Since the norm of the second
fundamental form of $\Sigma $ is bounded, then the norm of
the second fundamental form of ${\cal C} - \{ \vec{0} \}$ multiplied
by the distance function to the origin is also a bounded function
on ${\cal C} - \{ \vec{0} \}$.
These examples demonstrate that Theorem~\ref{tt2}
is precisely an ambiently three-dimensional result.

Theorem~\ref{tt2} is related to previous results by Colding and
Minicozzi, where they obtain quadratic estimates for the area of a
compact embedded minimal surface $\Sigma \subset \B (R)$
with connected boundary $\partial \Sigma \subset \esf ^2(R)$, assuming
a quadratic estimate of its Gaussian curvature and a concentration of the
genus of $\Sigma $ in a smaller ball; see
Theorem~0.5 and Corollary~0.7 in~\cite{cm24}.

An important application of Theorem~\ref{tt2} to the classical
theory of minimal surfaces is to characterize complete embedded
minimal surfaces with quadratic curvature decay. For the statement
of the next theorem, we first recall that a complete Riemannian
surface $M$ has { \em intrinsic quadratic curvature decay constant
$C>0$ with respect to a point $p\in M$}, if the absolute Gaussian
curvature function $|K_M|$ of $M$ satisfies
\[
|K_M(q)|\leq \frac{C}{d_M(p,q)^2}\quad \mbox{for all $q\in M$,}
\]
 where $d_M$ denotes the Riemannian distance
function. Since the intrinsic distance $d_M$ dominates the
ambient extrinsic distance in $\R^3$, we deduce that if a complete Riemannian surface
$M$ in $\R^3$ with $p=\vec{0}\in M$ has intrinsic quadratic curvature
decay constant $C$ with respect to $\vec{0}$, then it also has extrinsic
quadratic decay constant $C$ with respect to the radial distance $R$
to $\vec{0}$, in the sense that $|K_M|R^2 \leq {C}$ on $M$. For this reason, when
we say that a minimal surface in $\rth$ has {\em quadratic decay of
curvature}, we will always refer to curvature decay with respect to
the extrinsic distance $R$ to $\vec{0}$, independently of whether or
not $M$ passes through $\vec{0}$. Note that the property of having
quadratic decay of curvature is scale-invariant, a fact that will be
crucial throughout this paper.

\begin{theorem}[Quadratic Curvature Decay Theorem]
\label{thm1introd}
A complete, embedded minimal surface in $\R^3$
with compact boundary (possibly empty) has quadratic decay of
curvature if and only if it has finite total curvature. In
particular, a complete, connected embedded minimal surface $M\subset
\R^3$ with compact boundary and quadratic decay of curvature is
properly embedded in $\R^3$. Furthermore, if $C$ is the maximum of
the logarithmic growths of the ends of $M$, then
\[
\lim _{R\to \infty }\sup _{M-{\mbox{\bbsmall B}}(R)}|K_M|R^4=C^2.
\]
\end{theorem}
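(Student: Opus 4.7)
If $M$ has finite total curvature, then by Huber and Osserman, $M$ is conformally a compact Riemann surface minus finitely many points, its Gauss map extends meromorphically across these punctures, and each end is asymptotic to either a plane or a half-catenoid with a well-defined logarithmic growth parameter. A direct computation on the model ends yields $|K_M|R^4 \to 0$ at a planar end and $|K_M|R^4 \to c^2$ at a catenoid end of logarithmic growth $c$. In particular $|K_M|R^2\to 0$ along every end, which gives quadratic curvature decay, and simultaneously one obtains $\lim_{R\to\infty}\sup_{M-\B(R)}|K_M|R^4 = C^2$, where $C$ is the maximum of the logarithmic growths.

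\textbf{Hard direction: blow-down and the Local Removable Singularity Theorem.} Assume $|K_M|R^2 \leq C$ on $M$. Since this bound is scale invariant, for any divergent sequence $\l_n \to \infty$ the rescaled surfaces $M_n = (1/\l_n)M$ satisfy the same bound, and their compact boundaries shrink to $\vec 0$. Away from $\vec 0$, the sequence $\{M_n\}$ has locally uniformly bounded second fundamental form, so a standard lamination compactness argument yields, along a subsequence, a minimal lamination $\lc$ of $\R^3-\{\vec 0\}$ with $|\sigma_{\lc}|\,R \leq \sqrt{C}$. Theorem~\ref{tt2} then applies at $\vec 0$ to extend $\lc$ to a minimal lamination $\wt{\lc}$ of all of $\R^3$ with curvature bounded near the origin.

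\textbf{Classifying the blow-down and deducing finite total curvature.} The next step is to identify $\wt{\lc}$. Using structural results on minimal laminations of $\R^3$ (Halfspace Theorem, together with the fact that a properly embedded leaf inside a minimal lamination of $\R^3$ is stable on one side and hence a plane), every leaf of $\wt{\lc}$ is a plane; pairwise disjointness of leaves together with the fact that each accumulates $\vec 0$ forces all of them to coincide with a single plane $\Pi$ through $\vec 0$, counted with some finite multiplicity $m$. Back on $M$, this means that for $R$ sufficiently large the set $M-\B(R)$ decomposes into $m$ almost-horizontal minimal graphs over $\Pi$ with gradients tending to zero. Standard asymptotic analysis for minimal graphs with decaying gradient shows each such graphical end is asymptotic to a plane or a half-catenoid, so $M$ has finite total curvature. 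Properness in $\R^3$ is then a classical consequence of finite total curvature (Jorge--Meeks), and the limit formula $|K_M|R^4\to C^2$ follows by combining this asymptotic description with the calculation from the easy direction.

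\textbf{Main obstacle.} The heart of the argument is the classification of the blow-down lamination $\wt{\lc}$. Without Theorem~\ref{tt2} one is stuck with a lamination of the punctured space $\R^3-\{\vec 0\}$ that could in principle be singular at the origin in a way incompatible with planarity, and neither the uniqueness of the limit plane $\Pi$ nor the uniformity of the multiplicity $m$ across rescaling sequences could be concluded. Verifying that independence of the subsequence, and then translating multiplicity-$m$ convergence into a genuine graphical decomposition of the ends of $M$, is where the analytic work is concentrated; the Local Removable Singularity Theorem is precisely the black box that makes these steps possible.
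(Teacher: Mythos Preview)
Your treatment of the easy direction and of the final limit formula is fine; the problem is in the ``Classifying the blow-down'' step, where there is a genuine gap.

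You assert that every leaf of the extended lamination $\wt{\lc}$ of $\R^3$ is a plane, invoking ``the fact that a properly embedded leaf inside a minimal lamination of $\R^3$ is stable on one side and hence a plane''. This is false: a single catenoid is a minimal lamination of $\R^3$ and is neither stable nor a plane. Stability is available only for \emph{limit} leaves (or leaves reached with infinite multiplicity), not for arbitrary leaves. In fact the paper shows the opposite of what you claim: its Proposition~\ref{proposlamin} proves that a non-flat minimal lamination of $\R^3-\{\vec 0\}$ with quadratic curvature decay is a \emph{single} properly embedded (possibly non-flat) surface in $\R^3$. So the blow-down lamination could a priori be non-flat, and your argument does not exclude this. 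Consequently the conclusion that $M-\B(R)$ splits into $m$ graphical sheets over a fixed plane $\Pi$ is unjustified, and you also have not established that the plane and the multiplicity are independent of the rescaling sequence; Theorem~\ref{tt2} by itself does not supply either of these facts.

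The paper's proof (Theorem~\ref{thm2}) avoids this obstacle by a different mechanism. After first establishing properness, it sets up a dichotomy: either $|K_M|R^2$ eventually drops below the small constant $C_1$ of Lemma~\ref{lemma2}, in which case that lemma directly gives finitely many annular ends of planar/catenoidal type; or there is a sequence $p_n\to\infty$ with $|K_M|(p_n)|p_n|^2\geq C_1$, producing a \emph{non-flat} blow-down leaf $L$ through $\vec 0$. This second case is then eliminated by a monotonicity argument: the non-decreasing area ratio $R^{-2}\mathrm{Area}(M\cap[\B(R)-\B(R_1)])$ is shown to converge to $\ve^{-2}\mathrm{Area}(L\cap\B(\ve))$ for every small $\ve$, which is strictly increasing in $\ve$ because $L$ is not flat --- a contradiction. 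This monotonicity step (or something equivalent) is exactly what is missing from your outline.
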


The Local Removable Singularity Theorem in this paper
will be applied in forthcoming papers to obtain
the following results.
\begin{enumerate}
\item A dynamics type result for the space of all limits of
a given non-flat, properly embedded minimal surface in $\R^3$
under divergent sequences of dilations in~\cite{mpr20}.

\item A blow-up technique on the scale of non-trivial topology for describing the
local structure of a complete embedded minimal surface with injectivity radius zero
in a homogeneously regular Riemannian three-manifold in~\cite{mpr14}.

\item Global structure theorems for certain possibly singular
minimal laminations of $\rth$ in \cite{mpr11}.

\item Bounds for the number
of ends and for the index of stability of all
complete, embedded minimal surfaces in $\rth$ with finite
topology, more than one end and having fixed genus in \cite{mpr8}.

\item Calabi-Yau type results. For example, a complete
embedded minimal surface in $\R^3$ of finite genus is
properly embedded if and only if it has a countable
number of ends, see~\cite{mpr9};
    this result generalizes a theorem by Colding
    and Minicozzi~\cite{cm35}, who proved the sufficient
    implication in the case that the number of ends is finite.

\item We will extend in~\cite{mpr21} the Local Removable Singularity Theorem
from the minimal case (that is the leaves of ${\cal L}$ in Theorem~\ref{tt2}
are minimal surfaces) to the case of an $H$-lamination (all the leaves of
${\cal L}$ have the same constant mean curvature $H\in \R $). In this generalization,
we will even allow the leaves of ${\cal L}$ to intersect tangentially. This
extended Local Removable Singularity Theorem is the key tool for the
classification of all CMC (constant mean curvature) foliations of $\R^3$ and $\esf ^3$ with a closed countable
number of singularities, where by a CMC foliation we mean that the leaves of
the foliation have constant mean curvature, possibly varying from leaf to leaf.
We point out that the statement of this classification
was announced in Theorem 6.8 of~\cite{mpr19}; there we only gave there a proof
in the particular case of $N=\R^3$ and ${\cal S}$ is finite, and referred the
reader to an earlier version of the present manuscript for the proof in the general case. Nevertheless,
for the sake of simplicity we will only deal here with minimal surfaces and laminations,
and postpone both the generalization of Theorem~\ref{tt2} to the case
of $H$-laminations and the proof of the classification of CMC
foliations of $\R^3-{\cal S}$ and $\esf^3-{\cal S}$, ${\cal S}$ being a closed countable set,
 to the paper~\cite{mpr21}.

 \end{enumerate}

The paper is organized as follows. In Section~\ref{secex} we present
the basic definitions and examples of minimal laminations in $\R^3$
and in Riemannian three-manifolds. Section~\ref{sec3} is devoted to
prove a key lemma on stable minimal surfaces which are complete outside
a point in $\R^3$. In Section~\ref{sec4} we apply results by
Colding and Minicozzi~\cite{cm23} to demonstrate a key technical
result about minimal laminations of $\R^3-\{ \vec{0}\} $ with
quadratic decay of curvature, which will be used in
Section~\ref{sectionproofremov} when proving the
Local Removable Singularity Theorem. In Section~\ref{sec6} we apply
the Local Removable Singularity Theorem to
characterize complete embedded minimal surfaces with quadratic decay
of curvature in $\R^3$ as being surfaces with finite total
curvature. We will give some applications to the case
of minimal surfaces or minimal laminations with countably many singularities
in Section~\ref{sec10}. We finish the paper with an appendix
that contains a second
proof of the main technical result in Section~\ref{sec4}, which does not depend on
the results of Colding and Minicozzi in~\cite{cm23}.

The authors would like to thank David Hoffman for helpful
suggestions and Brian White for explaining to us classical work
of Allard and Almgren concerning limit tangent cones for surfaces
with bounded mean curvature outside an isolated singular point.

\section{Basic definitions and some examples.} \label{secex}

\begin{definition}\label{def-limset} {\rm
Let $M$ be a complete, embedded surface in a three-manifold $N$. A
point $p\in N$ is a {\it limit point} of $M$ if there exists a
sequence $\{p_n\}_n\subset M$ which diverges to infinity in $M$ with
respect to the intrinsic Riemannian topology on $M$ but converges in
$N$ to $p$ as $n\to \infty$. Let lim$(M)$ denote the set of all limit
points of $M$ in $N$; we call this set the {\it limit set of $M$}.
In particular, lim$(M)$ is a closed subset of $N$ and $\overline{M} -M
\subset \lim (M)$, where $\overline{M}$ denotes the closure of~$M$.}
\end{definition}

\begin{definition}
\label{deflamination}
{\rm
 A {\it codimension one
lamination} of a Riemannian three-manifold $N$ is the union of a
collection of pairwise disjoint, connected, injectively immersed
surfaces, with a certain local product structure. More precisely, it
is a pair $({\mathcal L},{\mathcal A})$ satisfying:
\begin{enumerate}
\item ${\mathcal L}$ is a closed subset of $N$;
\item ${\mathcal A}=\{ \varphi _{\be }\colon \D \times (0,1)\to
U_{\be }\} _{\be }$ is an atlas of coordinate charts of $N$ (here
$\D $ is the open unit disk in $\R^2$, $(0,1)$ is the open unit
interval and $U_{\be }$ is an open subset of $N$); note that
although $N$ is assumed to be smooth, we
only require that the regularity of the atlas (i.e. that of
its change of coordinates) is of class $C^0$, i.e. ${\cal A}$
is an atlas with respect to the topological structure of $N$.
\item For each $\be $, there exists a closed subset $C_{\be }$ of
$(0,1)$ such that $\varphi _{\be }^{-1}(U_{\be }\cap {\mathcal L})=\D \times
C_{\be}$.
\end{enumerate}

We will simply denote laminations by ${\mathcal L}$, omitting the
charts $\varphi _{\be }$ in ${\mathcal A}$. A lamination ${\mathcal
L}$ is said to be a {\it foliation of $N$} if ${\mathcal L}=N$.
Every lamination ${\mathcal L}$ naturally decomposes into a
collection of disjoint connected topological surfaces (locally given by $\varphi
_{\be }(\D \times \{ t\} )$, $t\in C_{\be }$, with the notation
above), called the {\it leaves} of ${\mathcal L}$.
Note that if $\Delta
\subset {\cal L}$ is any collection of leaves of ${\cal L}$, then
the closure of the union of these leaves has the structure of a
lamination within ${\cal L}$, which we will call a {\it
sublamination.} The notion of limit point of a complete embedded
surface (Definition~\ref{def-limset}) can be extended to the case of
a lamination $\mathcal{L}$ of a three-manifold $N$ as follows. A point
$p\in \mathcal{L}$ is a {\it limit point} if there exists a coordinate chart
$\varphi _{\beta }\colon \D \times (0,1)\to U_{\be }$ as in Definition~\ref{deflamination}
such that $p\in U_{\be }$ and $\varphi _{\be }^{-1}(p)=(x,t)$ with
$t$ belonging to the accumulation set of $C_{\be }$.
It is easy to show that if $p$ is a limit point of
a lamination ${\cal L}$, then the leaf $L$ of ${\cal L}$ passing through $p$
consists entirely of limit points of ${\cal L}$; see Footnote~\ref{footnote}.
In this case, $L$ is called a {\it limit leaf} of ${\cal L}$.
 }
\end{definition}

A lamination ${\cal L}$ of $N$ is said to be a {\it minimal lamination}
if each of its leaves is a smooth surface with zero mean curvature.
In this case, the function $|\sigma _{\cal L}|$ that associates to each
point $p$ of ${\cal L}$ the norm of the second fundamental form of the unique leaf
of $\mathcal{L}$ passing through $p$, makes sense on $\mathcal{L}$.
 A natural question to ask is whether or not the function
$|\sigma _{\cal L}|$ is locally bounded for any
minimal lamination ${\cal L}$ in a Riemannian
three-manifold $N$. Concerning this question, we
observe  that the 1-sided curvature estimates for minimal disks by Colding and
Minicozzi~\cite{cm23,cm35} imply that $|\sigma _{\cal L}|$
is locally bounded (to prove this, one only has to deal
with limit leaves, where the 1-sided curvature estimates apply).
Another important observation is that given a
sequence of minimal laminations ${\cal L}_n$ of $N$ with uniformly bounded
second fundamental forms
on compact subdomains of $N$, a subsequence of the ${\cal L}_n$ converges
to a minimal lamination of $N$; see Proposition~B1 in~\cite{cm23}.


\subsection{Minimal laminations with isolated singularities.}

We first construct examples in the closed unit ball of $\R^3$
centered the origin, with the origin as the unique non-removable
singularity.  We then show how these examples lead to related
singular minimal laminations in the hyperbolic space ${ \HH }^3$.

\begin{description}
\item[{\sc Example I.}] {\it Catenoid type laminations.}
Consider the sequence of horizontal circles
$C_n=\esf^2(1)\cap \{ x_3=\frac{1}{n}\} $, $n\geq 2$.
   Note that each pair
$C_{2k},C_{2k+1}$ bounds a compact unstable catenoid $M(k)\subset \overline{\B }(1)$.
Clearly, $M(k)\cap M(k')=\mbox{\O }$ if $k\neq k'$. The sequence $\{
M(k)\} _k$ converges with multiplicity two outside of the origin
$\vec{0}$ to the closed horizontal disk $\overline{\D }$ of radius 1
centered at $\vec{0}$. Thus, $\{ M(k)\} _k\cup \{ \overline{\D }-\{
\vec{0}\} \} $ is a minimal lamination of $\overline{\B }(1)-
\{ \vec{0}\} $ which does not extend through the origin; see
Figure~\ref{example1} left.
\begin{figure}
\begin{center}
\includegraphics[width=13.1cm]{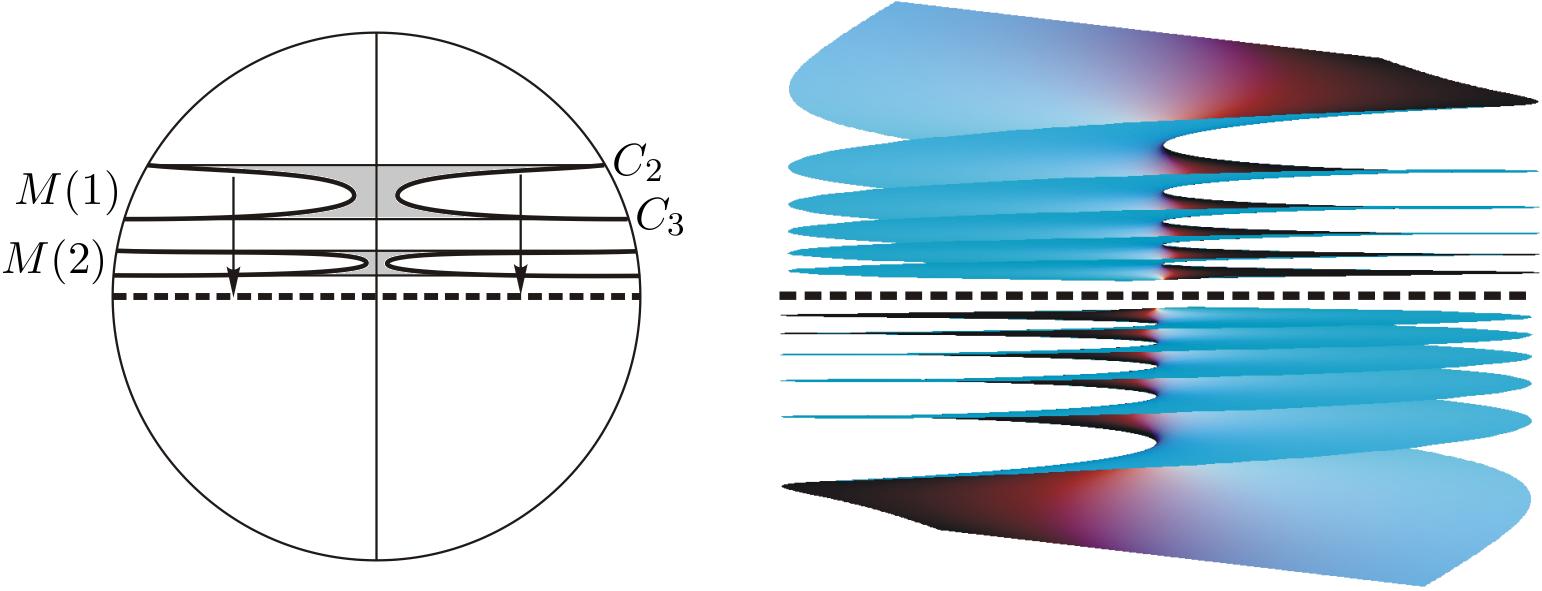}
\caption{Left: A catenoid type lamination. Right: A Colding-Minicozzi
type lamination in a cylinder.}
\label{example1}
\end{center}
\end{figure}

\item[{\sc Example II.}] {\it Colding-Minicozzi examples.}
In their paper~\cite{cm28}, Colding and Minicozzi constructed a
sequence of compact, embedded minimal disks $D_n\subset \overline{\B }(1)$
with boundaries in $\esf ^2(1)$, that converges to a singular
minimal lamination $\overline{{\cal L}}$ of $\overline{\B }(1)$
with an isolated singularity at
$\vec{0}$. The related lamination ${\cal L}$ of $\overline{\B}
(1) - \{ \vec{0} \}$ consists of a unique limit leaf which
is  the punctured closed disk $\overline{\D }-\{ \vec{0}\} $,
together with two non-proper leaves that spiral into $\overline{\D
}-\{ \vec{0}\} $ from opposite sides; see Figure~\ref{example1} right.

Consider the exhaustion of $\Hip ^3$ (identified with $\B
(1)$ through the Poincar\'e model) by hyperbolic geodesic balls of
hyperbolic radius $n\in \N $ centered
at the origin, together with compact minimal disks with boundaries
on the boundaries of these balls, similar to the compact
Colding-Minicozzi disks. We conjecture that these examples produce a
similar limit lamination of $\Hip ^3-\{ \vec{0}\} $ with three
leaves, one which is totally geodesic and the other two which are
not proper and that spiral into the first one. We remark that one of
the main results of Colding-Minicozzi theory (Theorem 0.1
in~\cite{cm23}) insures that such an example cannot be constructed
in $\R^3$.

\item[{\sc Example III.}] {\it Catenoid type examples in
$\Hip ^3$ and in $\Hip ^2\times \R $.}
As in example I, consider the circles $C_n=\esf ^n(1)\cap \{ x_3=\frac{1}{n}\} $,
where $\esf^2(1)$ is now viewed as the boundary at infinity of $\Hip ^3$.
Then each pair of circles $C_{2k},C_{2k+1}$ is the asymptotic boundary of
a properly embedded annular minimal unstable surface $M(k)$,
which is a surface of revolution called a catenoid (see e.g.,
Lemma~3.5 in Gomes~\cite{gom1}). The sequence $\{ M(k)\} _k$ converges with
   multiplicity two outside of $\vec{0}$ to the horizontal
totally geodesic subspace $\D $ at height zero. Thus,
$\{ M(k)\} _k\cup \{ \D -\{ \vec{0}\} \} $ is
a minimal lamination of $\Hip ^3-\{ \vec{0}\} $, which does
not extend through the origin. A similar catenoidal construction
can be done in $\Hip ^2\times \R $, where we consider $\Hip ^2$
in the disk model of the hyperbolic plane, using the minimal catenoids
constructed in Theorem~1 of~\cite{ner2}.
Note that the Half-space Theorem~\cite{hm10} excludes this
type of singular minimal lamination in $\R^3$.
\end{description}

\subsection{Minimal laminations with limit leaves.}

\begin{description}
\item[{\sc Example IV.}] {\it Simply-connected bridged examples.}
Coming back to the Euclidean closed unit ball $\overline{\B }(1)$,
consider the sequence of horizontal
closed disks $\D _n=\overline{\B }(1)\cap \{ x_3=\frac{1}{n}\} $, $n\geq 2 $.
Connect each pair $\D _n,\D _{n+1}$ by a thin, almost
   vertical minimal  bridge (in opposite sides for consecutive
disks, as in Figure~\ref{example4} left), and perturb slightly
this non-minimal surface to obtain an embedded, stable minimal
surface with boundary in
$\overline{\B }(1)$ (this is possible by the
{\it bridge principle}~\cite{my2,wh11,wh12}).
We denote by $M$ the intersection of
   this surface with $\B (1)$. Then, the closure of
$M$ in $\B (1)$ is a minimal lamination of
$\B (1)$ with two leaves, both being stable, one of which is
$\D$ (this is a limit leaf) and the other one is not flat and
not proper.
\begin{figure}
\begin{center}
\includegraphics[width=12.38cm,height=4.5cm]{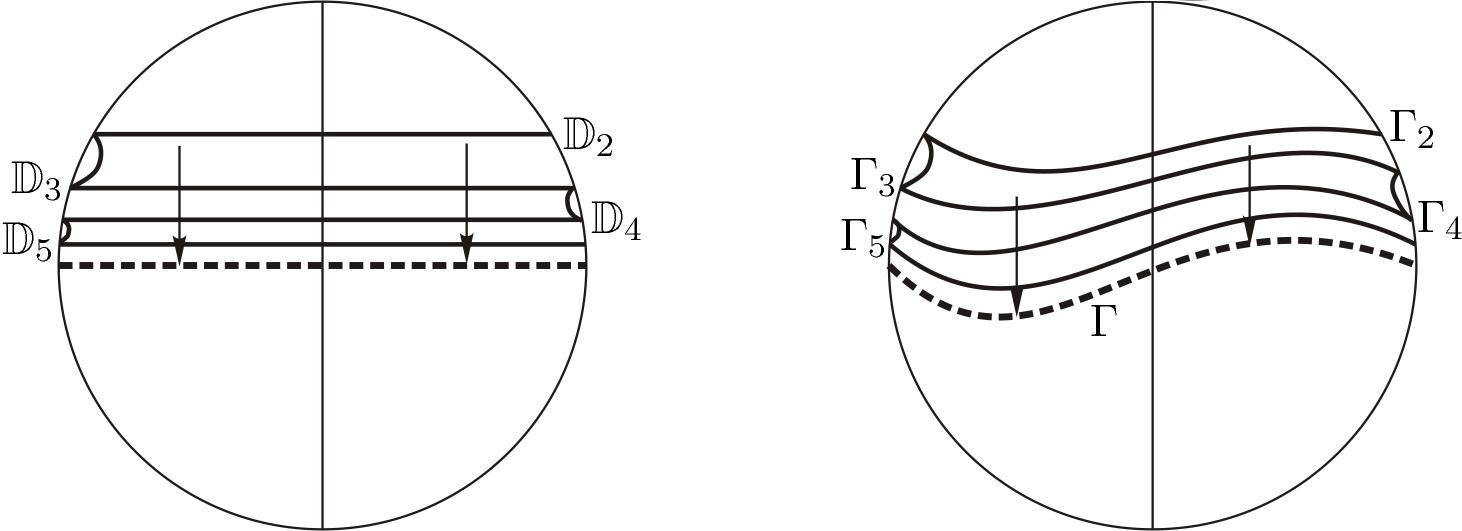}
\caption{Left: Almost flat minimal disks joined by small bridges.
Right: A similar example with a non-flat limit leaf.}
\label{example4}
\end{center}
\end{figure}

A similar example with a non-flat limit leaf can be
   constructed by exchanging the horizontal circles
by suitable curves in $\esf^2(1)$. Consider a non-planar
   smooth Jordan curve $\G \subset \esf^2(1)$ which
   admits a one-to-one projection onto a convex planar
curve in a plane $\Pi $.
Let $\G _n$ be a sequence of smooth Jordan curves in
   $\esf^2(1)$ converging to
$\G $, so that each $\G _n$ also projects injectively onto a convex
planar curve in $\Pi $ and $\{ \G _n\} _n\cup \{ \G \} $ is a
lamination on $\esf^2(1)$. An elementary application of the maximum principle
implies that each of the $\G _n$ is the boundary of a
unique compact minimal surface $\overline{M_n}$, which is a graph over its
projection to $\Pi $. Now join slight perturbations of the
$\overline{M_n}$ by thin bridges as in the preceding paragraph, to
obtain a simply-connected minimal surface in the closed unit ball.
Let $M$ be the intersection of this surface with $\B (1)$.
Then, the closure of $M$ in $\B (1)$ is a minimal lamination
of $\B (1)$
   with two leaves, both being non-flat and stable,
and exactly one of them is properly embedded in
$\B (1)$ and is a limit leaf; see Figure~\ref{example4} right.

\item[{\sc Example V.}] {\it Simply-connected bridged examples in $\Hip ^3$.}
As in the previous subsection, the minimal laminations in example IV give rise to
minimal laminations of $\Hip ^3$ consisting
of two stable, complete, simply connected minimal surfaces, one of
which is proper and the other one which is not proper in the space,
and either one is not totally geodesic or both of them are not
totally geodesic, depending on the choice of the Euclidean model
surface in Figure~\ref{example4} (for this existence, one can use
Anderson~\cite{an4} to create the corresponding minimal disks $\overline{M_n}$
as in Example IV, and then use the bridge principle at infinity as described in
Coskunuzer~\cite{cosk1}). In this case, the proper leaf is
the unique limit leaf of the minimal lamination. More generally,
Theorem~13 in~\cite{mr13} implies that the closure of any complete,
embedded minimal surface of finite topology in $\Hip ^3$
has the structure of a minimal lamination.
\end{description}

\section{Stable minimal surfaces which are complete outside of a point.}
\label{sec3}

\begin{definition}
\label{defcompl} {\rm A surface $M\subset \R^3-\{ \vec{0}\} $ is
{\it complete outside the origin,} if every divergent path in $M$ of
finite length has as limit point the origin. }
\end{definition}

If $M$ is a complete, stable, orientable minimal surface in $\R^3$,
then $M$ is a plane~\cite{cp1,fs1,po1}. Our goal in this section is to
extend this result to the case where $M$ is complete outside the
origin.

\begin{remark}
{\rm
\begin{description}
\item[]
\item[{\rm 1.}] In Sections 4, 5 and 6 we shall study complete, embedded minimal surfaces
$M\subset \R^3$ with quadratic decay of curvature. Our approach is
to produce from $M$, via a sequence of homothetic rescalings, a
minimal lamination ${\cal L}$ of $\R^3-\{ \vec{0}\} $ with a limit
leaf $L$. Since $L$ is a leaf of a minimal lamination of $\R^3-\{
\vec{0}\} $, then $L$ is complete outside the origin. After
possibly passing to its orientable two-sheeted cover and applying
the main theorem of~\cite{mpr18}, we can assume that
$L$ is stable, orientable and complete outside the origin.
The following lemma will then be
used to show that the closure of $L$ is a plane. This planar leaf
$L$ will play a key role in proving that $M$ must have finite total
curvature.

\item[{\rm 2.}] The line of arguments in item 1 of this remark is inspired by
ideas in our previous paper~\cite{mpr4}, where we proved that a
properly embedded minimal surface of finite genus in $\R^3$ cannot
have one limit end. A key lemma in the proof of this result states
that if such a surface $M$ exists, then some sequence of homothetic
shrinkings of $M$ converges to a minimal lamination of $\R^3-\{
\vec{0}\} $. Furthermore, this lamination is contained in a closed
half-space and contains a limit leaf $L$, which is different from
the boundary of the half-space. Since $L$ is a leaf of a minimal
lamination of $\R^3-\{ \vec{0}\} $, then it is complete outside
$\vec{0}$ and as it is a limit leaf, it is stable. We then proved that
the closure $\overline{L}$ of $L$ must be a plane. Using the plane
$\overline{L}$ as a guide for understanding the lamination, we obtained a
contradiction.

\item[{\rm 3.}] The minimal case of Lemma~\ref{lema1} below was found independently by
Colding and Minicozzi~\cite{cm25}.
\end{description}
}
\end{remark}

\begin{lemma} [Stability Lemma] \label{lema1}
Let $L\subset \R^3-\{ \vec{0}\} $ be a stable, immersed
constant mean curvature (orientable if minimal) surface,
which is complete outside the
origin. Then, $\overline{L}$ is a plane.
\end{lemma}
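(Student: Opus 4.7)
The plan is to pass to a conformally rescaled metric on $L$ that converts the puncture $\vec{0}$ into an end at infinity, and then apply the stability inequality (which is conformally covariant in dimension two) with cutoff functions approaching the constant function $1$ to force $|\sigma_L|\equiv 0$.

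Specifically, I would consider $\tilde g := R^{-2}\,g$ on $L$, where $g$ is the induced metric from $\R^3$. Completeness of $(L,\tilde g)$ follows from the elementary bound $|\nabla R|\leq 1$ (because $R$ is the restriction to $L$ of the ambient $1$-Lipschitz function): along any $g$-arclength-parametrised curve $\gamma$ in $L$, one has $R(\gamma(s))\leq T-s$ on paths reaching $\vec{0}$ in finite $g$-length $T$, and $R(\gamma(s))\leq R_0+s$ on paths escaping to infinity in $L$; in either case $\int ds/R(\gamma(s))=\infty$, so both the puncture and the infinity of $L$ lie at infinite $\tilde g$-distance. The next step, which I expect to be the main obstacle, is to establish that $(L,\tilde g)$ is parabolic, i.e.\ that there exist $\phi_n\in C^\infty_c(L)$ with $\phi_n\to 1$ on compact sets and $\int_L|\tilde\nabla\phi_n|^2\,d\tilde A\to 0$. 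Here I would exploit the pointwise identity $\Delta_L\log R=2(1-|\nabla R|^2)/R^2\geq 0$ (valid on any minimal surface, with a bounded correction term $2H\langle x,\nu\rangle/R^2$ in the CMC case), together with a co-area computation along the level sets of $\log R$, to produce a logarithmic cutoff in $\log R$ having the desired energy decay.

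Once parabolicity is in hand, the conformal invariance of the Dirichlet integral in dimension two rewrites the stability inequality as
\[
\int_L R^{2}|\sigma_L|^2\,\phi^2\,d\tilde A \;\leq\; \int_L|\tilde\nabla\phi|^2\,d\tilde A \qquad\text{for all }\phi\in C^\infty_c(L).
\]
Testing with $\phi=\phi_n$ and letting $n\to\infty$ forces $R^2|\sigma_L|^2\equiv 0$, hence $|\sigma_L|\equiv 0$, so $L$ is totally geodesic. Being connected, $L$ lies in an affine plane $\Pi$, and completeness of $L$ outside $\vec{0}$ yields $\overline L=\Pi$. The CMC case runs on identical lines because in $\R^3$ the Jacobi operator remains $\Delta+|\sigma_L|^2$ (the Ricci curvature of $\R^3$ vanishes), so the only extra work is to control the additional term in $\Delta_L\log R$ mentioned above.
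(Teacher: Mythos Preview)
Your strategy uses the same conformal metric $\tilde g=R^{-2}g$ as the paper and rests on the same completeness observation, but there is a genuine gap at the parabolicity step. The tools you cite --- subharmonicity of $\log R$ on a minimal surface, coarea along its level sets, and a logarithmic cutoff in $\log R$ --- do not by themselves force $\int_L|\tilde\nabla\phi_n|^2\,d\tilde A\to 0$. Concretely, the energy of a cutoff $\phi_n=\psi(n^{-1}\log R)$ is controlled by the $\tilde g$-area of $\{|\log R|\leq Cn\}$, and subharmonicity of $\log R$ only tells you that the flux $\int_{\{R=r\}}|\nabla\log R|\,ds$ is monotone in $r$, not that it is bounded; nothing prevents the area from growing too fast at one of the ends. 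More fundamentally, parabolicity is a conformal invariant, so $(L,\tilde g)$ is parabolic if and only if the underlying Riemann surface $L$ is parabolic, and there exist complete (immersed) minimal surfaces in $\R^3$ that are conformally hyperbolic. Minimality alone cannot deliver the conformal type you need, and the extra information ``complete outside $\vec 0$'' does not help here.

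The paper's proof uses stability at exactly this juncture, but in the opposite order from your plan: rather than first proving parabolicity and then testing the stability inequality, it uses stability of $(L,g)$ to deduce the operator inequality $-\tilde\Delta+\tilde K\geq 0$ on the complete surface $(L,\tilde g)$ (via $\tilde K=R^2(K_L+\Delta\log R)\geq R^2K_L\geq 2R^2K_L$ since $K_L\leq 0$, combined with $-\Delta+2K_L\geq 0$). Fischer--Colbrie--Schoen then gives that the universal cover of $(L,\tilde g)$ is conformally $\C$, and the positive Jacobi function $u$ (which satisfies $\Delta u=2K_Lu\leq 0$) lifts to a positive superharmonic function on $\C$, hence is constant, forcing $K_L\equiv 0$. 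The point is that stability is what supplies the conformal-type information; your outline tries to obtain parabolicity for free and only afterwards cash in stability, and that order does not go through.
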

\begin{proof}
We will present a detailed proof in the minimal
case, since this is the version needed in this paper; see
Lemma~6.4 of~\cite{mpr19} for a proof in the general case of
constant mean curvature.
If $\vec{0}\notin \overline{L}$, then $L$ is complete and so, it is
a plane. Assume now that $\vec{0}\in\overline{L}$. Consider the
metric $\widetilde{g}=\frac{1}{R^2}g$ on $L$, where $g$ is the
metric on $L$ induced by the usual inner product $\langle ,\rangle $ of
$\R^3$. Note that if $L$ were a plane through $\vec{0}$, then
$\widetilde{g}$ would be the metric on $L$ of an  infinite cylinder
of radius~1 with ends at $\vec{0}$ and at infinity. Since
$(\R^3-\{ \vec{0}\} ,\widehat{g})$ with
$\widehat{g}=\frac{1}{R^2}\langle ,\rangle $, is isometric to $\esf
^2(1)\times \R $, then $(L,\widetilde{g})\subset (\R^3-\{ \vec{0}\}
,\widehat{g})$ is complete.

We next show how the assumption of stability can be used to prove
that $(L,g)$ is flat. The laplacians and Gauss
curvatures of $g,\widetilde{g}$ are related by the equations
$\widetilde{\Delta }=R^2\Delta $ and $\widetilde{K}=R^2(K_{L}+\Delta
\log R)$. Since $\Delta \log R=\frac{2(1-\| \nabla R\| ^2)}{R^2}\geq
0$, then
\[
-\widetilde{\Delta }+\widetilde{K}=R^2(-\Delta +K_{L}+\Delta \log R)\geq
R^2(-\Delta +K_{L}).
\]
Since $K_{L}\leq 0$ and $(L,g)$ is stable, $-\Delta +K_{L}\geq
-\Delta +2K_{L}\geq 0$, and so, $-\widetilde{\Delta }+\widetilde{K}
\geq 0$ on $(L,\widetilde{g})$. As $\widetilde{g}$ is complete, the
universal covering of $L$ is conformally $\C $ (Fischer-Colbrie and
Schoen~\cite{fs1}). Since $(L,g)$ is stable, there exists a positive
Jacobi function $u$ on $L$. Passing to the universal covering
$\widehat{L}$, we have $\Delta \widehat{u}=2K_{\widehat{L}}\widehat{u}\leq
0$. Thus $\widehat{u}$ is a positive superharmonic on $\C $, and
hence constant. Therefore, $0=\Delta u-2K_{L}u=-2K_{L}u$ on $L$,
which means that $K_L=0$.
\end{proof}

We will need the following two corollaries. The first one follows directly
from Lemma~\ref{lema1} and the fact that the two-sided cover of every limit leaf of a minimal
lamination is stable, see~\cite{mpr18}. For the second corollary, we refer the reader
to Lemma~4.2 in~\cite{mpr20} which implies that (with the
notation of Corollary~\ref{sc2} below) the two-sided cover  $\wh{L}$
of $L$ is stable; hence, Lemma~\ref{lema1} applies to give that $\wh{L}$
(and so $L$) is flat.

\begin{corollary}
\label{sc1}
If $L$ is a limit leaf of a minimal lamination of $\rth - \{ \vec{0} \}$, then
$\overline{L}$ is a plane.
\end{corollary}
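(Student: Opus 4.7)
The plan is to combine the Stability Lemma (Lemma~\ref{lema1}) with the stability of limit leaves established in~\cite{mpr18}, reducing the corollary to an essentially immediate consequence of what has already been proved. The two ingredients mesh so cleanly that I expect the main content of the argument to be the verification that $L$ (more precisely, its two-sided cover) satisfies the three hypotheses of Lemma~\ref{lema1}: immersed minimal, stable and orientable, and complete outside the origin.

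First I would invoke the result from~\cite{mpr18} cited in the excerpt: the two-sided cover $\wh{L}\to L$ of any limit leaf $L$ of a codimension-one minimal lamination is stable. By construction $\wh{L}$ is two-sided, hence orientable, and it inherits the zero mean curvature of $L$ since the covering map is a local isometry. Next I would check completeness outside $\vec{0}$. Let $\gamma\colon [0,1)\to \wh{L}$ be a divergent path of finite length; its projection to $L\subset \R^3-\{\vec{0}\}$ is again a divergent path of finite length in the leaf $L$ of the minimal lamination ${\cal L}$ of $\R^3-\{\vec{0}\}$. Because ${\cal L}$ is a closed subset of $\R^3-\{\vec{0}\}$, any accumulation point in $\R^3-\{\vec{0}\}$ of $\gamma$ would have to lie in some leaf of ${\cal L}$, which, together with the local product structure of a lamination, would force $\gamma$ to be non-divergent in $L$ (and so also in $\wh{L}$). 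Hence the only possible limit point in $\R^3$ of $\gamma $ is $\vec{0}$, which is precisely the definition of being complete outside the origin in the sense of Definition~\ref{defcompl}.

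With the hypotheses verified, Lemma~\ref{lema1} applies to $\wh{L}$ and yields that $\overline{\wh{L}}$ is a plane. Since the covering map $\wh{L}\to L$ is a local isometry, the image leaf $L$ is itself contained in this plane (it is an open subset of the image of the covering map, which lies in a plane), so $\overline{L}$ is contained in a plane $\Pi $. Because $L$ is a complete (outside the origin) minimal surface contained in $\Pi $, its closure is either $\Pi $ itself or an open subset of $\Pi $ whose boundary is forced, by the limit-leaf property and the completeness outside $\vec{0}$, to be at most the point $\vec{0}$; in either case $\overline{L}=\Pi $ is a plane, finishing the proof.

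The only step requiring genuine care is the completeness-outside-the-origin verification, since one must use the closedness and local product structure of the lamination rather than properness of $L$; everything else is a direct citation of Lemma~\ref{lema1} and of~\cite{mpr18}.
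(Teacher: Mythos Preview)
Your proof is correct and follows exactly the approach the paper takes: the paper states that the corollary ``follows directly from Lemma~\ref{lema1} and the fact that the two-sided cover of every limit leaf of a minimal lamination is stable, see~\cite{mpr18},'' and you have simply written out the details of that sentence (verifying completeness outside the origin via the lamination's local product structure, and descending from $\wh{L}$ to $L$). There is no substantive difference between your argument and the paper's.
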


\begin{corollary}
\label{sc2}
Let $\lc$ be a minimal lamination of $\rth$ (resp. of $\R^3-\{ 0\} $)
which is a limit of embedded minimal surfaces $M_{n}$ with uniformly
 bounded second fundamental form on compact sets in $\rth$ (resp. of $\R^3-\{ 0\} $). Let $L$ be a leaf of ${\cal
L}$ which is not a limit leaf of $\lc$, such that the
multiplicity of the limit $\{ M_{n} \}_n\to L$ is greater than one.
Then, $L$ (resp. $\overline{L}$) is a plane.
\end{corollary}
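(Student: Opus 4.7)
The plan is to reduce to the Stability Lemma (Lemma~\ref{lema1}) by upgrading the hypothesis ``multiplicity $>1$'' into stability of the two-sided cover $\wh{L}$ of $L$. Because $L$ is not a limit leaf of $\lc$, a small normal neighborhood of any compact subdomain of $L$ is disjoint from every other leaf of $\lc$. Hence for $n$ large the portion of $M_n$ converging to that subdomain can be written locally as a finite collection of graphs over $L$ in its normal bundle, pulled back to $\wh{L}$ as at least two disjoint graphs $u_n^1<u_n^2$ since the limit multiplicity is at least~$2$.

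The key step is to produce a positive Jacobi function $w$ on $\wh{L}$ from the separation of these sheets. I would normalize: fix a base point $q_0\in\wh{L}$ and set
\[
w_n=\frac{u_n^2-u_n^1}{(u_n^2-u_n^1)(q_0)},
\]
so that $w_n(q_0)=1$ and $w_n>0$. Each $u_n^i$ satisfies the minimal surface equation over $\wh{L}$, and since both sheets collapse onto the zero section, a standard linearization shows $w_n$ satisfies an elliptic equation whose coefficients converge to those of the Jacobi operator $J=\Delta+|\sigma_{\wh L}|^2$. Harnack's inequality applied to the positive functions $w_n$ gives uniform interior bounds on compact subdomains, so after passing to a subsequence $w_n\to w$ in $C^2_{\mathrm{loc}}$ with $Jw=0$, $w(q_0)=1$ and $w\geq 0$; the maximum principle then forces $w>0$ on $\wh{L}$. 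The existence of such a positive Jacobi function is exactly the classical criterion (Fischer--Colbrie--Schoen) for stability of $\wh{L}$. This is the step one may simply cite from Lemma~4.2 of~\cite{mpr20}.

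Once $\wh{L}$ is known to be stable, the conclusion is immediate. In the ambient case $N=\rth$, the leaf $\wh{L}$ is a complete, orientable, stable minimal surface in $\rth$, hence a plane by Fischer--Colbrie--Schoen (equivalently, by Lemma~\ref{lema1} applied with $\vec 0\notin\overline{\wh L}$); therefore $L$ is a plane. In the punctured case $N=\rth-\{\vec0\}$, the surface $\wh{L}$ is a stable minimal surface complete outside the origin, so Lemma~\ref{lema1} applies directly and yields that $\overline{\wh L}$ is a plane, whence $\overline{L}$ is a plane.

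The main obstacle I expect is the normalization step that guarantees the limit Jacobi function is not identically zero and is positive: one needs a Harnack-type estimate on the positive solutions $w_n$ to pass to a non-trivial limit, together with a careful check that the linearized operator really is the Jacobi operator of $\wh{L}$ (the one-sided/two-sided distinction is what forces lifting to the cover in the first place). These analytic points are exactly the content of Lemma~4.2 of~\cite{mpr20}, which I would invoke rather than re-derive, so that the remaining argument reduces to a one-line application of Lemma~\ref{lema1}.
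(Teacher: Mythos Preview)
Your proposal is correct and follows exactly the route the paper takes: invoke Lemma~4.2 of~\cite{mpr20} to conclude that the two-sided cover $\wh{L}$ is stable, then apply Lemma~\ref{lema1} to deduce flatness. The paper's own proof is a two-line reference to precisely these two ingredients; your write-up simply unpacks the content of the cited lemma (the Harnack/linearization argument producing a positive Jacobi function), which is accurate but not required here.
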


\section{Minimal laminations with quadratic decay of curvature.}
\label{sec4}
In this section we will obtain a preliminary description of any
non-flat minimal lamination ${\cal L}$ of $\R^3-\{ \vec{0}\} $ with
quadratic decay of curvature, see Definition~\ref{def4} below. We
first consider the simpler case where ${\cal L}$ consists of a
properly embedded  minimal surface in $\R^3$. When the decay
constant for its curvature is small, then the topology and geometry of the surface is
simple, as shown in the next lemma.

\begin{lemma}
\label{lemma2} There exists $C\in (0,1)$ such that if $M\subset
\R^3-\B (1)$ is a properly embedded, connected minimal surface with
non-empty boundary $\partial M\subset \esf ^2(1)$ and
$|K_M|R^2\leq C$ on $M$, then $M$ is an annulus which has a planar
or catenoidal end.
\end{lemma}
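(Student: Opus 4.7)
The strategy is to show that for $C$ small enough, $M$ is asymptotic to a single plane at infinity, from which finite total curvature and the stated annular structure with planar or catenoidal end follow by the classification of embedded minimal ends of finite total curvature (Schoen).

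First, I would exploit the scale invariance of the hypothesis $|K_M|R^2\leq C$. For any sequence $\l _n\searrow 0$, the blow-downs $\l _nM$ sit in $\R^3-\B (\l _n)$ with boundaries on the shrinking spheres $\esf ^2(\l _n)$ and retain the bound $|K_{\l _nM}|R^2\leq C$. On each compact subset of $\R^3-\{\vec{0}\}$, the second fundamental forms are uniformly bounded, so by the compactness of minimal laminations with locally bounded curvature (Proposition B1 in~\cite{cm23}), a subsequence converges to a minimal lamination $\lc _0$ of $\R^3-\{\vec{0}\}$ satisfying $|K_{\lc _0}|R^2\leq C$.

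Next I would show that every leaf of $\lc _0$ is an open subset of a plane through $\vec{0}$. Any limit leaf of $\lc _0$ has stable two-sided cover by~\cite{mpr18} and is complete outside $\vec{0}$, so by Corollary~\ref{sc1} its closure is such a plane. Any leaf arising as a multiplicity $\geq 2$ limit of the $\l _nM$ is planar by Corollary~\ref{sc2}. For a remaining leaf (a simple, non-limit-leaf limit), iterating the blow-down produces a tangent cone at infinity inside $\lc _0$ which is a minimal cone in $\R^3-\{\vec{0}\}$ satisfying the same bound; since any non-flat embedded minimal cone in $\R^3-\{\vec{0}\}$ satisfies $|K|R^2\geq c_0>0$ uniformly, smallness of $C$ forces this cone, and hence the leaf, to be flat. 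Combining this with connectedness of $M$, properness, and the convex hull property then forces $\lc _0$ to reduce to a single plane $\Pi $ through $\vec{0}$ with some multiplicity $k\geq 1$.

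The crux of the argument is to show $k=1$. If $k\geq 2$, Corollary~\ref{sc2} together with the Colding-Minicozzi one-sided curvature estimate from~\cite{cm23} would produce, outside a sufficiently large extrinsic ball, at least two disjoint graphs of small gradient over $\Pi $ contained in $M$, hence at least two distinct ends of $M$. For $C$ small, such parallel sheets are incompatible with the connectedness of $M$ and the boundary condition $\partial M\subset \esf ^2(1)$, yielding a contradiction. Once $k=1$, $M$ is asymptotic to a single graph over $\Pi $, and standard asymptotic analysis gives that $M$ has finite total curvature with a single planar or catenoidal end, so $\partial M$ is a single Jordan curve in $\esf ^2(1)$ and $M$ is an annulus. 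The main obstacle is precisely this multiplicity-one step, which requires a careful synthesis of the stability results for limit leaves, the one-sided curvature estimates of Colding-Minicozzi, and the smallness of $C$ in order to rule out higher-sheeted asymptotic behavior.
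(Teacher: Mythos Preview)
Your approach differs substantially from the paper's, and the steps you flag as difficult are precisely where the argument breaks down.

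The paper proceeds in two independent stages. First, for \emph{any} $C<1$, a direct Morse-theoretic computation on $f=R^2$ shows that every interior critical point of $f$ on $M$ is a non-degenerate local minimum (the Hessian bound $(\nabla^2 f)_p(v,v)\geq 2(1-\sqrt{C})>0$ follows from $|K_M|R^2\leq C$). Since index-one critical points are then impossible, $f$ has no interior critical points at all, and $M$ meets every sphere $\esf^2(r)$, $r\geq 1$, transversely in a single closed curve. Thus $M$ is an annulus \emph{before} any blow-down analysis. You bypass this step entirely and try to recover the topology from finite total curvature at the end; but the annulus structure is exactly what makes the second stage tractable.

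Second, the paper argues by contradiction: if the lemma fails, there exist $C_n\to 0$ and annuli $M_n$ with infinite total curvature. Because $C_n\to 0$, any blow-down limit automatically satisfies $|K|R^2\equiv 0$ and is therefore a lamination by parallel planes---no stability lemma, no Corollary~\ref{sc2}, no cone argument needed. The contradiction then comes from a concrete dichotomy: on $\esf^2(1)$ the rescaled annulus either traces a single near-circular curve (contradicting unbounded geodesic curvature via Gauss--Bonnet) or spirals, forcing two disjoint graphical disks over nearby limit planes and contradicting the one-curve-per-sphere property just established.

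Your argument, by contrast, fixes a small $C$ and must prove that every leaf of the blow-down $\lc_0$ is flat. For limit leaves and multiplicity-$\geq 2$ leaves this is fine, but your treatment of a simple non-limit leaf $L$ is circular: ``iterating the blow-down produces a tangent cone at infinity\ldots and hence the leaf is flat.'' A flat tangent cone does \emph{not} force $L$ to be flat---the catenoid is a counterexample. Indeed, such an $L$ is a complete-outside-$\vec{0}$ minimal surface with $|K_L|R^2\leq C$, which is essentially the hypothesis of the very lemma you are proving. The paper sidesteps this entirely by letting $C_n\to 0$. Your multiplicity-one step has the same flavor: ``parallel sheets are incompatible with connectedness and $\partial M\subset\esf^2(1)$'' is not an argument, and the paper's replacement---the spiraling analysis---works precisely because the annulus structure (one curve per sphere) is already in hand.
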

\begin{proof}
First let $C$ be any positive number less than 1.
Let $f=R^2$ on $M$. Its critical points occur
at those $p\in M$ where $M$ is tangent to $\esf ^2(|p|)$. The
hessian $\nabla ^2f$ at a critical point $p$ is $(\nabla
^2f)_p(v,v)=2\left( |v|^2-\sigma _p(v,v)\langle p,\vec{n}\rangle \right)
$, $v\in T_pM$, where $\sigma $ is the second fundamental form of
$M$  and $\vec{n}$ its Gauss map. Taking $|v|=1$, we  have
$\sigma_p(v,v)\leq |\sigma _p(e_i,e_i)|=\sqrt{|K_M|}(p)$, where
$e_1,e_2$ is an orthonormal basis of principal directions at $p$.
Since $\langle p,\vec{n}\rangle \leq |p|$, we have
\begin{equation}
(\nabla ^2f)_p(v,v)\geq 2\left[ 1-(|K_M|R^2)^{1/2}\right] \geq 2(1-\sqrt{C})>0.
\end{equation}
Hence, all critical points of $f$ in the interior of $M$ are
non-degenerate local minima on $M$, and if $p\in \mbox{Int}(M)$
is a local minimum of $f$, then $M$ lies outside $\B (|p|)$
locally around $p$, touching $\esf ^2(|p|)$ only at $p$. Suppose
$f$ admits an interior critical point $p$. Since $M$ is connected,
we can choose a regular value $R_1>1$ of $f$ large enough so that $p$ lies in the same
component $M_1$ of $M\cap \overline{\B }(R_1)$ as $\partial M$. Since $p$
is a non-degenerate local minimum and $f$ has only non-degenerate critical
points, then $f$ is a Morse function on $M$ and Morse theory implies that $f|_{M_1}$
must have an index-one critical point, which is impossible. Therefore,
$f$ has no local minima on $M$ except along $\partial M$ where it attains its
global minimum value. Hence, $M$ intersects every sphere
$\esf^2(r)$, $r\geq 1$, transversely in a connected simple closed
curve, which implies that $M$ is an annulus.

If $M$ has finite total curvature, then it must be asymptotic to an
end of a plane or of a catenoid, thus either the lemma is proved or
$M$ has infinite total curvature.

A general technique which we will use in later sections to obtain compactness of
sequences of minimal surfaces is the following (see
e.g., Meeks and Rosenberg~\cite{mr8}): If $\{ M_n\} _n$ is a sequence of minimal surfaces
properly embedded in an open set  $B\subset \R^3$, with their
curvature functions $K_{M_n}$ uniformly bounded on compact subsets of $B$,
then a subsequence converges uniformly on compact subsets of $B$ to a minimal
lamination of $B$ with leaves that have the same bound on the
curvature as the surfaces $M_n$.

Suppose that the lemma fails. In this case, there exists a sequence
of positive numbers $C_n\to 0$ and minimal annuli $M_n$ satisfying
the conditions of the lemma, such that $M_n$ has infinite total
curvature and $|K_{M_n}|R^2 \leq C_n$. Since the $M_n$ are annuli
with infinite total curvature, the Gauss-Bonnet formula implies that
there exists a sequence of numbers $R_n\to \infty $ such that the
total geodesic curvature of the outer boundary of  $M_n\cap \B
(R_n)$ is greater than $n$. After extracting a subsequence, the
surfaces $\widetilde{M}_n=\frac{1}{R_n}M_n$ converge to a minimal lamination
${\cal L}$ of $\R^3-\{ \vec{0}\} $ that extends across $\vec{0}$ to
a lamination of $\R^3$ by parallel planes (since $|K_{\widetilde{M}_n}|R^2
\leq C_n$ and $C_n\to 0$ as $n\to \infty $).
Furthermore, ${\cal L}$ contains a plane $\Pi $ passing through
$\vec{0}$. Consider the great circle $\G =\Pi \cap \esf^2(1)$ and
let $\G (\ve )$ be the $\ve $-neighborhood of $\G $ in $\esf ^2(1)$,
for a small number $\ve >0$. Each $\widetilde{M}_n$ transversely
intersects $\esf^2(1)$ in a simple closed curve $\a _n$ and the
Gauss map of $\widetilde{M}_n$ along $\a _n$ is almost constant and
parallel to the unit normal vector to $\Pi $. Clearly, for $n$
sufficiently large, either $\widetilde{M}_n\cap \G (\ve )$ contains
long spiraling curves that join points in the two components of
$\partial \G (\ve )$ or it consists of a single closed curve which
is $C^2$-close to $\G $. This last case contradicts the assumption
that the total geodesic curvature of $M_n\cap \esf^2(R_n)$ is
unbounded. Hence, we must have spiraling curves in $\widetilde{M}_n
\cap \G (\ve )$. In this case, there are planes $\Pi _+,\Pi _-$ in
${\cal L}$, parallel to $\Pi $, such that $\partial \G (\ve )=(\Pi
_+\cup \Pi _-)\cap \esf^2(1)$. In a small neighborhood $U$ of $(\Pi
_+\cup \Pi _-)\cap \B (2)$ which is disjoint from $\Pi $, the
surfaces $\widetilde{M}_n\cap U$ converge smoothly to ${\cal L}\cap
U$. Since $(\Pi _+\cup \Pi _-)\cap \B (2)$ is simply connected, then
a standard monodromy lifting argument implies that for $n$ large,
$\widetilde{M}_n \cap
\overline{\B }(1)$ contains two compact disks in $U$ which are close
to $(\Pi _+\cup \Pi _-)\cap \overline{\B }(1)$. This contradicts the
fact that each $\widetilde{M}_n$ intersects $\esf^2(1)$ transversely in just one
simple closed  curve (see the first paragraph of this proof). This
contradiction completes the proof of the lemma.
\end{proof}

\begin{definition} \label{def4}
{\rm
We denote by $K_{\cal L}\colon{\cal L}\to \R $ the
Gaussian curvature function of a lamination ${\cal L}$.
A lamination ${\cal L}$ of $\R^3-\{ \vec{0}\} $
is said to have {\it quadratic decay of curvature} if $|K_{\cal
L}|R^2\leq C$ on ${\cal L}$ for some $C>0$.
}
\end{definition}

Our next goal is to show properness in $\R^3-\{ \vec{0}\} $ for every leaf of
a non-flat minimal lamination ${\cal L}$ of $\R^3-\{ \vec{0}\} $ with quadratic
decay of curvature (Proposition~\ref{propos1} below). The proof of this property
is a delicate technical argument, which we will break into separate statements.

\begin{lemma}
  \label{lema2new}
Let ${\cal L}$ be a non-flat minimal lamination of
$\R^3-\{ \vec{0}\} $ with quadratic decay of curvature. Suppose that $L$ is a leaf
of ${\cal L}$ which is not proper in $\R^3-\{ \vec{0}\} $. Then after a
rotation in $\R^3$, $L$ is contained in $H^+= \{ (x_1,x_2,x_3)\in \R^3\ | \ x_3>0\} $, and
\[
\mbox{\rm lim}(L)=\{ \mbox{limit points of $L$ in $\R^3-\{ \vec{0}\} $}\} =\{ x_3=0\} -\{ \vec{0}\} .
\]
In particular, $L$ is proper in $H^+$.
\end{lemma}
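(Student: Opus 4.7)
The plan is to find a limit leaf $L'$ of $\cal L$ whose closure is a plane $\Pi$ through $\vec 0$, rotate so that $\Pi=\{x_3=0\}$, place $L$ on one side of $\Pi$ by disjointness, and finally propagate the limit behaviour across all of $\Pi-\{\vec 0\}$ via the lamination's product structure; properness of $L$ in $H^+$ will then be automatic.

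Since $L$ is not proper in $\R^3-\{\vec 0\}$, choose $p\in\lim(L)$. Then $p$ is a limit point of $\cal L$, so the leaf $L'$ of $\cal L$ through $p$ is a limit leaf, and Corollary~\ref{sc1} yields $\overline{L'}=\Pi$, a plane in $\R^3$. The main obstacle is to show that $\vec 0\in\Pi$. Arguing by contradiction, assume $d:=d(\vec 0,\Pi)>0$, so that $L'=\Pi$ is a complete planar leaf of $\cal L$ disjoint from $L$, and by connectedness $L$ lies in one of the two open half-spaces bounded by $\Pi$. In the half-space not containing $\vec 0$ one has $R\geq d$ on $L$, the bound $|K_L|R^2\leq C$ gives $|K_L|\leq C/d^2$ globally, and $L$ (being bounded away from $\vec 0$) is intrinsically complete; the Meeks--Rosenberg properness theorem for complete embedded minimal surfaces of bounded curvature, together with the strong halfspace theorem of Hoffman--Meeks applied to the disjoint pair $L,\Pi$, then force $L$ to be a plane parallel to $\Pi$, contradicting the non-properness of $L$. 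In the half-space containing $\vec 0$, the end of $L$ accumulating to $\Pi$ still has bounded curvature in a tubular neighborhood of $\Pi$ and the local product structure of $\cal L$ near $\Pi$ exhibits this end as a sequence of minimal graphs over $\Pi$ converging to $\Pi$; the challenge here is to combine this with the Stability Lemma (Lemma~\ref{lema1}) applied to a suitable limit of rescaled plaques to extract the same contradiction. Either way, $\vec 0\in\Pi$, and we rotate so that $\Pi=\{x_3=0\}$.

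Since $L'=\Pi-\{\vec 0\}$ is a leaf disjoint from the connected leaf $L$, we have $L\subset H^+$ (after reflection if needed). To finish, I verify that $\lim(L)=\{x_3=0\}-\{\vec 0\}$. For $\supset$, given any $q\in L'$, join $q$ to $p$ by a path $\gamma$ inside the connected limit leaf $L'$, cover $\gamma$ by finitely many foliated charts of $\cal L$, and use the local product structure to propagate a sequence in $L$ converging to $p$ into one in $L$ converging to $q$. For $\subset$, any $q\in\lim(L)$ lies in a limit leaf $L_q$ whose closure $\Pi_q$ must also pass through $\vec 0$ by the same main step; since $L_q$ and $L'$ are disjoint leaves, the planes $\Pi_q$ and $\Pi$ can meet only at $\vec 0$, so two distinct planes through $\vec 0$ would intersect in a line lying in both $L_q$ and $L'$, a contradiction. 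Hence $\Pi_q=\Pi$ and $q\in\Pi-\{\vec 0\}$. Properness of $L$ in $H^+$ follows immediately: any intrinsically divergent sequence in $L$ accumulates in $\lim(L)\cup\{\vec 0\}\cup\{\infty\}=\{x_3=0\}\cup\{\infty\}$, which is disjoint from $H^+$.
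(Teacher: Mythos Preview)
Your overall architecture matches the paper's: find a limit leaf, show its closure is a plane through $\vec 0$, and then show that this punctured plane is all of $\lim(L)$. The step that fails is your Case~(b), where $L$ lies in the half-space of $\Pi$ containing $\vec 0$. You explicitly flag this as ``the challenge'' and propose to use the Stability Lemma on ``a suitable limit of rescaled plaques,'' but you never carry out an argument, and the Stability Lemma is not the relevant tool here: it would at best tell you that some limit object is flat, not that $L$ cannot accumulate on $\Pi$. So as written this is a genuine gap.

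The paper avoids your case split entirely. Assuming $d(\vec 0,\Pi)>0$, choose $\varepsilon>0$ so small that the $\varepsilon$-tube $L'(\varepsilon)$ about $L'=\Pi$ lies at positive distance from $\vec 0$; then $|K_L|R^2\le C$ gives a uniform curvature bound on $L\cap L'(\varepsilon)$. For $\varepsilon$ small enough each component $\Omega$ of $L\cap L'(\varepsilon)$ is a multigraph over $\Pi$ with small gradient, and since $L$ is embedded and lies on one side of $\Pi$, a separation argument upgrades $\Omega$ to a single-sheeted graph; hence $\Omega$ is proper in $L'(\varepsilon)$ with boundary in $\partial L'(\varepsilon)$. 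Now the catenoid-barrier argument from the proof of the Half-space Theorem (this is Lemma~1.3 of Meeks--Rosenberg, cited in the paper) applies directly to such a proper minimal graph converging to $\Pi$ and gives the contradiction. This works regardless of which side of $\Pi$ contains $\vec 0$, so your Case~(a) machinery is unnecessary and your Case~(b) gap disappears.

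Your $\supset$ step (propagating limit points along $L'$ through finitely many foliated charts) and your $\subset$ step (a second punctured plane through $\vec 0$ would meet the first in a line) are essentially the paper's ``closed and open in $L'$'' and ``no second punctured plane'' arguments; in the $\subset$ step, just phrase it as ``if $L_q\neq L'$ then\ldots'' before invoking disjointness of distinct leaves.
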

\begin{proof}
As $L$ is not proper in $\R^3-\{ \vec{0}\} $, then $L$ is not flat and
there exists $p\in \lim(L)\subset \R^3-\{ \vec{0}\} $.
Let $L'$ be the leaf of ${\cal L}$  that contains $p$. Since $L'\cap
\lim(L)$ is closed and open\footnote{\label{footnote}Openness follows since a neighborhood
of every point of $L'\cap \lim(L)$ can be written as the limit of a sequence
of graphs, all of them contained in $L$. Note that the same argument
shows that if $p$ is a limit point of a leaf $L$ of a lamination, then the leaf passing
through $p$ consists entirely of limit points of $L$.}
in $L'$, then $L'\subset \lim (L)$. In
particular, by Corollary~\ref{sc1}, $L'$ is either a plane or a
plane punctured at the origin, and $L$ is contained in one of the
half-spaces determined by $L'$. If $L'$ does not pass through
$\vec{0}$, then there exists $\ve >0$ such that the $\ve $-neighborhood $L'(\ve )$
of $L'$ is at positive distance from $\vec{0}$. Since $|K_L|R^2\leq C$ for some
$C>0$, then $L\cap L'(\ve )$ has bounded curvature, which is
impossible by the statement and proof of Lemma 1.3 in \cite{mr8};
for the sake of completeness we now sketch this argument. Taking $\ve
$ small, each component $\Omega $ of $L\cap L'(\ve )$ is a
multigraph over its orthogonal projection to $L'$. Actually $\Omega
$ is a graph over its projection on $L'$ by a separation argument.
Thus, $\Omega$ is proper in $L'(\ve )$, and the proof of the
Half-space Theorem~\cite{hm10} gives a contradiction. Hence, the
plane $\overline{L'}$ passes through $\vec{0}$. This argument also
shows that $L'$ equals $\lim (L)$ (otherwise we obtain a second
punctured plane $L''\subset \lim (L)$ which is a leaf of ${\mathcal L}$
and which passes through $\vec{0}$, which contradicts that $L'\cap
L''$ is empty). Now the lemma is proved.
\end{proof}

Given $\de >0$, let $C_{\de }=\{ (x_1,x_2,x_3)\ | \ x_3^2=\de ^2(x_1^2+x_2^2)\} \cap H^+$
(positive half-cone) and $C_{\de }^-$ the region of $H^+$ below $C_{\de }$.
\begin{lemma}
  \label{lema1new}
Let $L\subset H^+$ be a connected minimal surface which is complete
outside the origin and whose Gaussian curvature $K_L$ satisfies $|K_L|R^2\leq C$
for some $C>0$. Then for any $\ve >0$ small, there exists a $\de >0$ such
that in $L\cap C_{\de }^-$, the inequality $|\nabla _Lx_3|\leq \ve
\frac{x_3}{R}$ holds.
\end{lemma}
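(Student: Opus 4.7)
The plan is to argue by contradiction, combining a blow-down limit with the Liouville rigidity of positive harmonic functions on the punctured plane. Suppose the conclusion fails for some $\ve_0>0$: there exist $\de_n\downarrow 0$ and $p_n\in L\cap C_{\de_n}^-$ with $|\nabla_L x_3|(p_n)>\ve_0\, x_3(p_n)/R(p_n)$. Because $|\nabla_L x_3|$, $x_3/R$ and $|K_L|R^2$ are all invariant under homotheties centered at $\vec{0}$, the rescalings $\wt L_n=L/R(p_n)\subset H^+$ and points $\wt p_n=p_n/R(p_n)\in \esf^2(1)$ preserve both the curvature bound and the violating inequality, and $\wt p_n$ satisfies $x_3(\wt p_n)\leq \de_n\to 0$. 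Passing to a subsequence, $\wt p_n\to \wt p_\infty\in \esf^2(1)\cap \{x_3=0\}$, and the standard compactness theorem recalled in Section~\ref{sec4} yields a subsequential limit minimal lamination $\wt{\lc}$ of $\R^3-\{\vec{0}\}$ contained in $\overline{H^+}$.

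Next I would identify the leaf $L_\infty$ of $\wt{\lc}$ through $\wt p_\infty$. Since $L_\infty\subset \overline{H^+}$ and $L_\infty$ touches the minimal surface $\{x_3=0\}$ at $\wt p_\infty$, the strong maximum principle forces $L_\infty\subset \{x_3=0\}$; being a connected leaf of a lamination of $\R^3-\{\vec{0}\}$, it must coincide with $\{x_3=0\}-\{\vec{0}\}$. The uniform curvature bound upgrades the laminar convergence to smooth convergence on compact subsets of $\R^3-\{\vec{0}\}$, so for each compact $K\subset \{x_3=0\}-\{\vec{0}\}$ the sheet of $\wt L_n$ containing $\wt p_n$ is eventually a single-valued positive graph $u_n$ over $K$ with $\|u_n\|_{C^k(K)}\to 0$ for every $k$. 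A diagonal argument then represents this sheet as a positive minimal graph $u_n$ over an exhaustion $K_n\nearrow \{x_3=0\}-\{\vec{0}\}$, with $u_n(\wt p_n)=x_3(\wt p_n)$.

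Setting $v_n=u_n/u_n(\wt p_n)$ gives $v_n>0$, $v_n(\wt p_n)=1$, and
\[
\Div\!\left(\frac{\nabla v_n}{\sqrt{1+u_n(\wt p_n)^2|\nabla v_n|^2}}\right)=0.
\]
The Harnack inequality for positive solutions of the minimal surface equation, together with the interior gradient estimate, bounds $v_n$ and $|\nabla v_n|$ uniformly on compact subsets of $\R^2-\{\vec{0}\}$; since $u_n(\wt p_n)\to 0$, a further subsequence converges in $C^1_{\mathrm{loc}}(\R^2-\{\vec{0}\})$ to a positive harmonic function $v_\infty$ on the punctured plane with $v_\infty(\wt p_\infty)=1$. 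Positive harmonic functions on $\R^2-\{\vec{0}\}$ are constants, so $v_\infty\equiv 1$, and therefore
\[
\frac{|\nabla_{\wt L_n}x_3|(\wt p_n)}{x_3(\wt p_n)}\leq \frac{|\nabla u_n|(\wt p_n)}{u_n(\wt p_n)}=|\nabla v_n|(\wt p_n)\longrightarrow 0,
\]
contradicting $|\nabla_{\wt L_n}x_3|(\wt p_n)>\ve_0\, x_3(\wt p_n)$ for every $n$.

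The main difficulty is the graphical step: one must ensure that the sheet of $\wt L_n$ through $\wt p_n$ is realized as a graph over an exhaustion of $\{x_3=0\}-\{\vec{0}\}$, since a Harnack estimate on merely a fixed compact disk would only yield $|\nabla v_n|(\wt p_n)\leq C$ and give no contradiction. The Liouville rigidity on the whole punctured plane is precisely what forces the necessary gradient decay, and is the crucial global input behind the statement.
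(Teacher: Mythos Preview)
Your argument is correct, but it proceeds along a genuinely different route from the paper's proof. The paper argues directly rather than by contradiction: it passes to the conformal metric $\widetilde{g}=R^{-2}g$ on $L$, which is complete (as in the Stability Lemma), and applies the Cheng--Yau gradient estimate to the harmonic function $x_3$ on $(L,\widetilde{g})$. This yields $R|\nabla_L x_3|\le \alpha\, x_3\,(|\widetilde K_\infty|+1/a)$ on any $\widetilde{g}$-geodesic disk of radius $a$; since $\widetilde{g}$ is complete one may take $a$ large, and since the leaves in $C_\de^-$ are $C^2$-close (after conformal change) to pieces of the flat cylinder $\{x_3=0\}-\{\vec 0\}$, one can make $|\widetilde K_\infty|$ small by taking $\de$ small. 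The inequality then drops out with no limiting procedure.

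Your compactness-plus-Liouville approach is a soft substitute for Cheng--Yau: both exploit that the relevant model is the punctured plane (equivalently, the flat cylinder), and both ultimately rest on its parabolicity. The paper's route is quantitative and avoids all subsequence extraction, at the cost of invoking Cheng--Yau and checking the curvature of $(L,\widetilde g)$; your route uses only standard elliptic tools (Harnack, interior gradient estimates, Liouville on $\R^2-\{\vec 0\}$ via the universal cover), at the cost of the diagonal-exhaustion bookkeeping you flag. That bookkeeping is legitimate: lamination convergence with the uniform curvature bound does give, for each compact annulus $K_m\subset\{x_3=0\}-\{\vec 0\}$, that for $n\ge N(m)$ the sheet through $\widetilde p_n$ is a single positive graph over $K_m$, and a subsequence then realizes it over $K_n\nearrow\R^2-\{\vec 0\}$. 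One small point worth making explicit in your write-up is that Harnack is being applied to $u_n$ (a positive solution of the minimal surface equation, uniformly elliptic since $|\nabla u_n|\to 0$) rather than to $v_n$ directly, which immediately gives the local bounds on $v_n=u_n/u_n(\widetilde p_n)$.
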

\begin{proof}
   A consequence of $|K_L|R^2\leq
C$ is that for all $\ve >0$, there exists $\de >0$ such that for all
$p\in L\cap C_{\de }^-$, then the angle that the tangent space to
$L$ at $p$ makes with the horizontal is less than $\ve $.

Consider the conformal change of metric
$\widetilde{g}=\frac{1}{R^2}g$ that we used in the proof of Lemma~\ref{lema1},
where $g$ is the induced metric on $L$ by the usual inner product of $\R^3$.
Recall that $\widetilde{g}$ was proven to be complete on $L$.
Then, we can apply Theorem 6 in Cheng-Yau~\cite{chya1} to the harmonic function $x_3$
on $(L,\widetilde{g})$  to obtain
\begin{equation}
\label{CY1}
\widetilde{g}(\widetilde{\nabla }x_3,\widetilde{\nabla }x_3)(x)^{1/2}\leq
\alpha \, x_3(x)\left( |\widetilde{K}_{\infty }|+\frac{1}{a}\right) ,
\end{equation}
where $\widetilde{\nabla }x_3$ is the gradient of $x_3$ with respect to $\widetilde{g}$,
$x\in L\cap C_{\de }^-$, $\a >0$ is a universal constant and
$\widetilde{K}_{\infty }\in \R $ is a lower bound for the Gaussian curvature of
the geodesic disc $\widetilde{D}_L(x,a)$ in $(L,\widetilde{g})$
centered at $x$ with radius $a>0$.
As $\widetilde{g}$ is complete on $L$, we can take $a\geq \frac{2\a }{\ve }$.
For this value of $a$, we can choose $\de >0$ small enough so that
$|\widetilde{K}_{\infty }|<\frac{\ve}{2\a }$ in $\widetilde{D}_L(x,a)$ (this follows
since for fixed $a>0$ and $\de >0$ arbitrarily small, the geodesic disks $\widetilde{D}_L(x,a)$ centered
at $x\in L\cap C^-_{\delta }$ can be considered to be domains in $(L,g)$ which are locally
approximated arbitrarily well in the $C^2$-norm as small $x_3$-graphs over
a domain in the plane $\{ x_3=0\} -\{ \vec{0}\} $,
and the restriction of $\widetilde{g}$ to $\{ x_3=0\} -\{ \vec{0}\} $ is a flat metric).
As $\widetilde{\nabla }x_3=R^2\nabla x_3$, then~(\ref{CY1}) transforms into
$R|\nabla x_3|\leq
\alpha \, x_3(x)\left( |\widetilde{K}_{\infty }|+\frac{1}{a}\right) <\ve \, x_3(x)$,
which proves the lemma.
\end{proof}

Next we analyze the geometry of each component of $L\cap C_{\de }$ for
$\de >0$ sufficiently small, where $L$ satisfies the hypotheses and
conclusions of Lemma~\ref{lema2new}. Note that Lemma~\ref{lema1new} applies in this
setting and guarantees that for $\ve >0$ fixed and small, and $\de >0$
sufficiently small, $L$ intersects $C_{\de }$ transversely in a
small angle that is uniformly bounded away from zero and each
component of $L\cap C_{\de }$ is locally a radial graph over the
circle $C_{\de }\cap \{ x_3=1\} $. Furthermore, in the natural
polar coordinates in $C_{\de }$, the radial lines intersect the
collection of curves $L\cap C_{\de }$ almost orthogonally. Let
$\Lambda $ be the set of components of $L\cap C_{\de }$. Then
any $\G \in \Lambda $ is of one of the following two types, see
Figure~\ref{figure1}:
\begin{description}
\item[{\sc Type I.}] $\G $ is a closed almost horizontal curve.
In this case, any other $\G '\in \Lambda $ is also of type I, and
there are an infinite number of these curves, converging to $\{
\vec{0}\} $.
\item[{\sc Type II.}] $\G $ is a spiraling curve (with almost
horizontal tangent vector) limiting down to~$\{ \vec{0}\} $. $\G $
rotates infinitely many times around $C_{\de }$, and any other $\G
'\in \Lambda $ is of type~II. Note that in this case,
$\Lambda=\{\G_1, \ldots, \G_k\}$ has a finite number of these
spiraling components.
\end{description}
\begin{figure}
\begin{center}
\includegraphics[width=11.9cm,height=4cm]{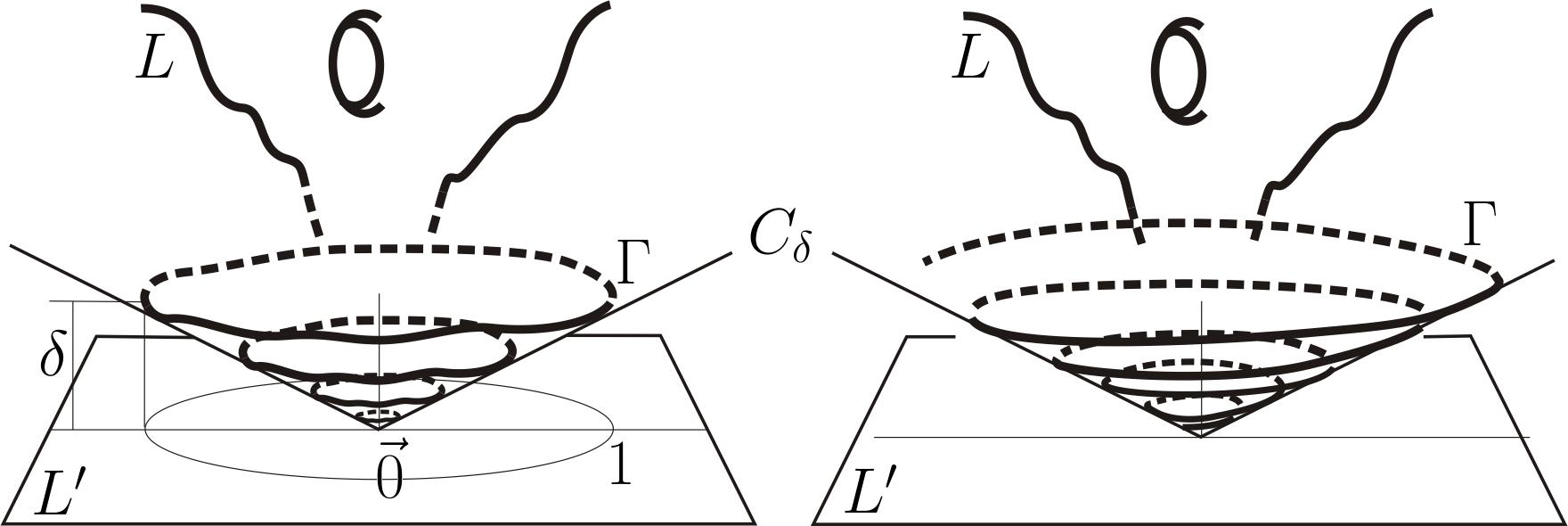}
\caption{Curves $\G \in \L $ of type I (left) and one curve of type II (right).}
\label{figure1}
\end{center}
\end{figure}
\par
\noindent
Observe that $L\cap C_{\de '}$ has the same pattern as $\Lambda $,
for each $\de '\in (0,\de )$. {\bf Our next goal is to find a contradiction in each of the
two cases listed above.} These contradictions will lead to the conclusion that a
leaf component $L$ satisfying the hypotheses and conclusions of Lemma~\ref{lema2new}
cannot exist. This will be the content of Proposition~\ref{propos1} at the end of this section.

\subsection{Suppose the curves in $\Lambda $ are of type I.}
\label{subsect1}

Let $\G \in \Lambda $. Denote by $E(\G )$ the component
  of $L\cap C_{\de }^-$ whose boundary contains $\G $. Lemma~\ref{lema1new}
implies that the third component of the unit normal vector to $E(\G )$ has a fixed
non-zero sign on $E(\G)$, and so, $E(\G)$ is locally a graph over its vertical
projection to $\{ x_3=0\} $. Since $E(\G )$ separates
$C_{\de }^-$ (because $E(\G )$ is properly embedded in the closure of
$C_{\de }^-$ and $\G \subset E(\G )$ generates the first homology group of $C_{\de }^-(1)$),
then $E(\G )$ is a global graph over its projection to $\{ x_3=0\} $.

\begin{assertion}
\label{ass4.5}
For $\de >0$ sufficiently small, the boundary of $E(\G )$ equals $\G $ for each
$\G \in \Lambda $.
\end{assertion}
\begin{proof}
Observe that otherwise there exists a point
$p=p(\de )\in E(\G )\cap C_\de $ where the tangent plane $T_pL$ is steeper than
the tangent plane to $C_{\de }$, i.e. $\de <|\nabla _Lx_3|$. Using again Lemma~\ref{lema1new},
\[
|\nabla _Lx_3|(p)\leq \ve \frac{x_3(p)}{R(p)}=\ve \frac{\de }{\sqrt{1+\de ^2}},
\]
where we have used that $p\in C_{\de }$ in the last equality. Therefore,
$\de <\frac{\ve \de }{\sqrt{1+\de ^2}}$, or equivalently,
$\sqrt{1+\de^2}<\ve $, which is impossible for $\ve >0$ small enough.
This proves the assertion.
\end{proof}

\begin{assertion}
\label{ass4.6}
There exists a sequence of points
  $\{ q_n\} _n\subset L$ converging to~$\vec{0}$ such that
for all $n\in \N$, we have $(|K_L|R^2)(q_n)\geq 1$.
  \end{assertion}
\begin{proof}
Reasoning by contradiction, suppose that there exists $r>0$
   small such that $|K_L|R^2<1$ in $L\cap \overline{\B }(r)$.
   By the arguments in the proof of Lemma~\ref{lemma2}, $f=R^2$
   is a Morse function with only local minima in $L\cap \overline{\B }(r)$,
and so,
$L\cap \overline{\B }(r)$ consists of a non-empty family of compact disks
and non-compact annuli with boundary on
$\esf^2(r)$ and which are proper in $\overline{\B }(r)-\{ \vec{0}\} $.
  Let $\Omega $ be one of these components and
suppose $\Omega $ is an annulus. If
$\Omega $ is conformally $\D ^*$, then $\Omega $ extends smoothly
  across $\vec{0}$, which contradicts the maximum principle since $L$ is
contained in $\{ x_3>0\} $. If $\Omega $
   is conformally $\{ \ve <|z|\leq 1\} $ for some $\ve >0$, then each
   coordinate function of $\Omega $ can be reflected in $\{ |z|=\ve \} $
   by Schwarz's reflection principle, defining a branched conformal
   harmonic map on
a larger annulus that maps the entire curve $\{ |z|=\ve \} $ to a single
  point, which is impossible.
This means that every component in $L\cap \overline{\B }(r)$ is a compact
disk. As the points of $\esf ^1(r)\times \{ 0\} $ are limit points of $L$,
we conclude that there exists
a sequence of boundary curves $\g _n$ of these disks components of $L\cap
\overline{\B }(r)$ that converges to  $\esf^1(r)\times \{ 0\} $, and such that
for $n$ large, $\g _n$ is the boundary of an exterior non-compact minimal
  graph over its projection to $\{ x_3=0\} $. This clearly contradicts that
$L$ is connected and finishes the proof of the assertion.
\end{proof}

Since $|K|R^2\leq C$ on the minimal  laminations
$\frac{1}{|q_n|}[L\cup (\{ x_3=0\} -\{ \vec{0}\} ]$,
a subsequence of these laminations converges to a minimal lamination
${\cal L}_1$ of $x_3^{-1}([0,\infty ))-\{ \vec {0}\} $ that contains $\{ x_3=0\} -\{ \vec{0}\}$.
By Assertion~\ref{ass4.6}, ${\cal L}_1$ also contains a non-flat leaf $L_1$ passing through a point
in $\esf^2(1)$. Since ${\cal L}_1$ is a lamination outside the origin, then
$L_1$ is complete outside the origin. Furthermore, $\vec{0}$ is in the closure of $L_1$ in $\R^3$
(otherwise the inequality $|K_{L_1}|R^2\leq C$ implies that
$L_1$ is a complete minimal surface with bounded Gaussian curvature,
which contradicts that $L_1$ is contained in a halfspace, see Xavier~\cite{xa5}).

\begin{assertion}
\label{ass4.7}
  In the above situation, $L_1$ is proper in $H^+$ and $\lim (L_1)=\{ x_3=0\} -\{ \vec{0}\} $.
\end{assertion}
\begin{proof}
  By Lemma~\ref{lema2new} and since $L_1\subset H^+$, it suffices to prove that
$L_1$ is not proper in $\R^3-\{ \vec{0}\} $. Arguing by contradiction, suppose that
$L_1$ is proper in $\R^3-\{ \vec{0}\} $. Since $L_1$ is disjoint from the compact
set $\esf ^1(1)\times \{ 0\} $, then the distance from $\esf ^1(1) \times \{ 0\} $
to $L_1$ is greater than some $d>0$. In particular,
the cylinder $\esf^1(1)\times [0,d]$ is disjoint from
$L_1$. Fix $n\in \N$. Since the curves $\esf^1(1)\times \{ d \} $ and
$\esf^1(\frac{1}{n})\times \{ 0\} $ are homotopic in a component of
$\{ x_3\geq 0\} -L_1$, then using $L_1$
as a barrier we obtain a compact least-area annulus $A(n)$ disjoint from
$L_1$ with these curves as boundary. The annulus $A(n)$
is a catenoid but no such catenoid exists for $n$ large enough,
since $d >0$ is fixed. This contradiction proves the assertion.
\end{proof}
By Assertion~\ref{ass4.7}, Lemma~\ref{lema1new} and the arguments in the
paragraph just after its proof, the intersection $\Lambda _1$ of $L_1$ with $C_{\de }$
consists of curves of type I or II. If the curves in $\Lambda _1$ are of type~II,
then the corresponding spirals produce after shrinking back to $L$ spiraling curves of
type II on $\Lambda $, which is contrary to the hypothesis. Thus, $\Lambda _1$ consists
of curves of type I.

By our previous description of type I curves, the components in $\Lambda _1$,
close to the origin are closed, almost horizontal curves that are naturally ordered by
their distances to $\vec{0}$ and have $\{ \vec{0}\} $ as limit set.
Furthermore, Assertion~\ref{ass4.5} gives that for $\de >0$ sufficiently small,
every $\G _1\in \Lambda _1$ bounds an annular end $G_{\G _1}$ of $L_1\cap C_{\de}^-$
which is a graph over the exterior of the vertical projection of $\G _1$
in $\{ x_3=0\} $.

\begin{assertion}
\label{ass4.8}
There exists a compact horizontal disk $\Delta _1\subset H^+$ such that the following properties hold.
\begin{enumerate}
\item $\Delta _1\cap L_1$ consists of a (non-zero) finite number of
simple closed  curves in $\Delta _1-\partial \Delta _1$.
\item $\Delta _1$ can be extended to a topological plane $P(\Delta _1)$ which is a global
graph over $\{ x_3=0\} $ and with $P(\Delta _1)\cap L_1=\Delta _1\cap L_1$.
\end{enumerate}
\end{assertion}
\begin{proof}
Recall that given $\G _1\in \L _1$, the end $G_{\G_1}$ of $L_1\cap C_{\de}^-$
is annular and graphical, hence
it has finite total curvature and it is planar or catenoidal (with positive logarithmic growth
because $L_1\subset H^+$).

If there exists a curve $\G _1\in\Lambda _1$ such that its corresponding
graphical annular end $G_{\G _1}$ of $L_1\cap C_{\de}^-$ is planar, then any curve
$\G '_1\in \Lambda _1-\{ \G_1\} $ at smaller distance from $\vec{0}$ than
$\G _1$ also bounds a planar graphical annular end $G_{\G '_1}$
of $L_1\cap C_{\de}^-$. In this case, since between consecutive
planar ends of $L_1$ we can always find a horizontal plane
$P$ that intersects $L_1$ transversally in a compact set,
our claim holds by taking an appropriate disk $\Delta _1$ in
$P$ and by letting $P(\Delta _1)=P$.

Suppose now that a curve $\G _1\in \Lambda _1$ bounds a
catenoidal end $G_{\G _1}\subset L_1$. Take a
horizontal plane $P\subset H^+$ whose height is large enough so that
$G_{\G _1}$ intersects $P$ transversely at an almost-circle $\G _1'$.
Consider the closed horizontal disk $\widehat{\Delta }_1\subset P$
bounded by  $\G _1'$. Now define ${\Delta }_1\subset P$ to be
a slightly smaller closed disk in $P$ which is contained in the interior of
$\widehat{\Delta }_1$ and such that
\[
W_1:=\Delta_1 \cap L_1 =(\widehat{\Delta}_1\cap L_1) -\G _1'
\]
is contained in the interior of $\Delta _1$ (we can assume $W_1\neq \mbox{\O}$
  by taking $\G_1$ close enough to $\vec{0}$).
In this case, the topological plane $P(\Delta _1)$ can be taken
to be the union of $\Delta _1$
with a minimal annular graph lying strictly above $G_{\G _1}$
and close to it. Observe
that the set of curves $W_1$ is the boundary of a proper, possibly disconnected
subdomain $L_1 (W_1)$ of $L_1$ which lies above $P$; see Figure~\ref{figure3}.
\begin{figure}
\begin{center}
\includegraphics[width=13cm]{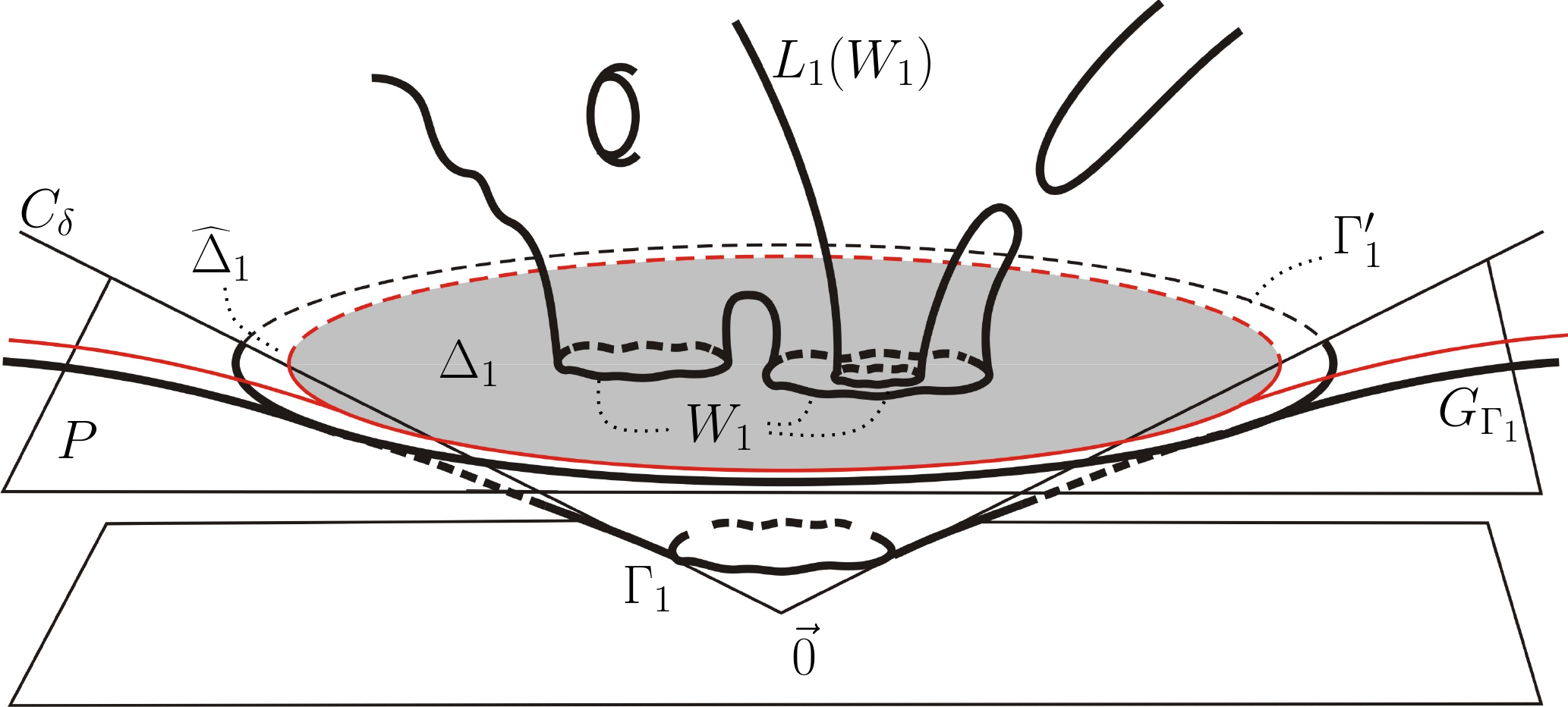}
\caption{The shaded area is the compact horizontal disk $\Delta _1$,
  in the case that the end $G_{\G _1}$ is catenoidal.}
\label{figure3}
\end{center}
\end{figure}
This completes the proof of the assertion.
\end{proof}

To find the desired contradiction in this case of type I curves,
we will use a flux argument.  Recall that for a
curve $\g \subset L$, the {\it flux of $x_3$} along $\g $ is defined as
\[
\mbox{Flux}(\nabla x_3,\g )=\int _{\g }\frac{\partial x_3}{\partial \eta },
\]
where $\eta $ denotes a unit conormal to $L$ along $\g $.

In the next flux argument, we will just consider the case where the annular graphical ends
$G_{\G _1}\subset L_1\cap C_{\de}^-$ for $\G_1\in \Lambda _1$ are catenoidal
(the planar end case is similar and we leave the details to the reader).
For $n$ large, the shrunk disks $D(n)=|q_n|{\Delta }_1$ intersect $L$
transversally in a set $W(n)$ which consists of a finite number of closed curves.
Furthermore, $W(n)$ bounds a proper, possibly disconnected subdomain
$L(W(n))$ of $L$ and $L(W(n))$ lies above the horizontal plane that contains $D(n)$.
By construction, $L(W(n))$ is the portion of $L$ above a topological plane
$P(n)$ which is a global graph over $\{ x_3=0\} $ with $P(n)\cap L=W(n)$.
Note that $L(W(n))\subset L(W(n+m))$ for every $n,m\in \N$ with $n$ large.
Since $x_3$ is proper on $L(W(n+m))$,
the absolute value of the flux of $\nabla x_3$ across $\partial L(W(n+m))$
is not less than the absolute value of flux of $\nabla x_3$ across $\partial L(W(n))$,
which is positive. This is a contradiction, since the length of
$\partial L(W(n+m))$ converges to $0$ as $m\to \infty $. This contradiction finishes
the analysis when the curves in $\Lambda $ are of type I.

\subsection{Suppose the curves in $\Lambda $ are of type II.}
\label{subsect2}

Take a component $\G $ in $\Lambda $. By embeddedness, all of the curves in
$\Lambda -\{ \G \} $
have disjoint arcs trapped between one complete turn of $\G $.
Since $L$ is proper in $H^+$, we have
the number of curves in $\Lambda $ is finite, say
$\Lambda =\{ \G (1),\ldots ,\G (k)\} $.

Consider the horizontal plane at height $\de $, which intersects $C_{\de }$
in a circle of radius $1$. After a small perturbation of $\de $, we may assume that
$\{ x_3=\de \} $ intersects $L$ transversely and intersects each $\G (i)$
transversely, $i=1,\ldots ,k$. Using natural cylindrical coordinates $(r,\theta ,x_3)$
in $\{ x_3\geq 0\} -\{ x_3-\mbox{axis}\} $, each spiraling curve
$\G (i)$ can be parameterized by the angle as $(r_i(\t ),\t ,\de \, r_i(\t ))$ for
$\t \in (-\infty ,\infty )$, where $r_i\colon \R\to (0,\infty )$
is a smooth function satisfying
\[
\lim _{\theta \to -\infty }r_i(\theta )=\infty ,\qquad
\lim _{\theta \to +\infty }r_i(\theta )=0.
\]
Furthermore, we may assume that the largest values of $\t $
such that the spiraling curves $\G (i)$ intersect $\{ x_3=\de \} $ are given by
$0\leq \t _1<\t _2<\ldots <\t_k<2\pi $. This means that
$r_i(\t _i)=1$, $r_i(\t )<1$ for $\t >\t _i$, $i=1,\ldots ,k$.

As in the case of type I curves, we will use a flux argument to find a contradiction.
Given $t>0$, we define the function
\begin{equation}
\label{eq:fluxt}
F(t)=\sum _{i=1}^k\mbox{Flux}\left( \nabla x_3,\G (i)|_{[\t _i,\t _i+t]}\right) \in \R ,
\end{equation}
where the unit conormal vector with respect to which the above flux is computed is
pointing inwards $L\cap C_{\de }^-$.

\begin{lemma}
\label{lema4.9}
  The function $F\colon (0,\infty )\to \R $ is bounded.
\end{lemma}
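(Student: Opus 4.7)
The plan is to exploit the harmonicity of $x_3$ on $L$: since $L$ is minimal, $x_3|_L$ is harmonic, so the $1$-form $\omega:=\ast\, dx_3$ is closed on $L$ and $\mbox{Flux}(\nabla x_3,\gamma)=\int_\gamma \omega$ for any curve $\gamma \subset L$. Writing $\sigma(t)=\gamma_1(t)+\cdots +\gamma_k(t)$ with $\gamma_i(t)=\G(i)|_{[\theta_i,\theta_i+t]}$, we have $F(t)=\int_{\sigma(t)}\omega$.

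The strategy is to close $\sigma(t)$ up into the boundary of a compact $2$-chain $\Omega(t)\subset L\cap C_\delta^-$ by adjoining two families of auxiliary curves. The first is a fixed $1$-chain $\beta$ in $L$ near the outer endpoints $p_i=\G(i)(\theta_i)$ at height $x_3=\delta$, independent of $t$; the second is a family $\alpha(t)$ of closing arcs connecting the inner endpoints $q_i(t)=\G(i)(\theta_i+t)$ across the portion of $L$ close to $\vec{0}$. If $\Omega(t)$ can be arranged so that $\partial \Omega(t)=\pm \sigma(t) +\alpha(t)+\beta$, then Stokes' theorem applied to the closed form $\omega$ gives
\[
F(t)=\mp \int_\beta \omega \mp \int_{\alpha(t)}\omega.
\]
The first summand is a $t$-independent constant, and the uniform boundedness of $F$ is reduced to the uniform boundedness of $\int_{\alpha(t)}\omega$.

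The key pointwise estimate comes from Lemma~\ref{lema1new}, which yields $|\omega|=|\nabla x_3|\leq \varepsilon\,\delta/\sqrt{1+\delta^2}$ throughout $L\cap C_\delta^-$; hence it suffices to produce $\alpha(t)$ with uniformly bounded total length. The natural candidate is to take each component of $\alpha(t)$ to be a segment of a level set of $x_3$ in $L\cap C_\delta^-$ at height $x_3=x_3(q_i(t))$, connecting $q_i(t)$ to the next endpoint $q_{i+1}(t_i)$ on an adjacent spiral. By Lemma~\ref{lema1new} such a segment is nearly horizontal, and its length is controlled by the absolute distance between consecutive spiral points at that level; since $|q_i(t)|\to 0$ and there are only finitely many spirals, these distances are uniformly bounded.

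The main obstacle is the topological/combinatorial construction: verifying the existence of the compact chain $\Omega(t)\subset L\cap C_\delta^-$ with the prescribed boundary. This rests on the description of $L\cap C_\delta^-$ near its boundary $\bigcup_i \G(i)$ as a collection of nearly-horizontal graph pages (Lemma~\ref{lema1new}), separated by the $k$ type II spirals, which are then glued into $\Omega(t)$ using the level-set arcs for $\alpha(t)$ and a fixed height-$\delta$ cycle for $\beta$. The detailed assembly of these pieces, and the verification of uniform length bounds for $\alpha(t)$ using the type II hypothesis (finite number of spirals, each accumulating only at $\vec{0}$), is the technical heart of the argument.
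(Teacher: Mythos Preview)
Your overall scheme --- complete the spiral arcs $\sigma(t)$ to the boundary of a compact piece of $L$ and apply the divergence theorem for the harmonic function $x_3$ --- is exactly the mechanism the paper uses. The gap is in where and how you close.

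You propose $\Omega(t)\subset L\cap C_\delta^-$ with closing arcs $\alpha(t)$ and $\beta$ lying in $L\cap C_\delta^-$ and joining endpoints $q_i(t)\in\Gamma(i)$ to $q_{i+1}(t')\in\Gamma(i+1)$ (respectively $p_i$ to $p_{i+1}$). But by the structural description immediately after this lemma, each component of $L\cap C_\delta^-$ is a graph of a single function $u_i$ over $\Omega_i=\{r>r_i(\theta)\}$ in the universal cover, whose entire boundary in $L$ is the single spiral $\Gamma(i)$. Distinct spirals therefore bound distinct components of $L\cap C_\delta^-$, so no arc in $L\cap C_\delta^-$ connects $q_i(t)$ to $q_{i+1}(t')$; your level-set segment starting at $q_i(t)$ stays in that one component and heads off toward $r=\infty$ rather than reaching another spiral. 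If instead you try to close each $\gamma_i(t)$ within its own component (join $q_i(t)$ back to $p_i$ inside the graph over $\Omega_i$), any such arc must traverse $t$ full turns in the $\theta$-variable, so its length is at least of order $t$ and the bound $|\omega|\le\varepsilon\delta/\sqrt{1+\delta^2}$ only gives a flux bound of order~$t$, which is useless. The observation that $|q_i(t)|\to 0$ controls only the extrinsic distance in~$\R^3$, not the intrinsic distance in $L$, and it is the latter that governs $\int_{\alpha(t)}\omega$.

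The paper closes on the \emph{other} side of $C_\delta$, inside $L\cap(H^+-C_\delta^-)$, and the construction of short closing arcs there is the substantive step you are missing. Arguing by contradiction with $|F(t_n)|\to\infty$, one rescales by $1/\lambda_n$ where $\lambda_n=|\Gamma(1)(\theta_1+t_n)|\to 0$, extracts a limit lamination $\mathcal{L}_1$ (still with type~II spirals, and with multiplicity-one convergence by Corollaries~\ref{sc1}--\ref{sc2}), and chooses at the limit scale a compact disk $D\subset H^+-C_\delta^-$ with $\partial D\subset C_\delta$ transverse to $\mathcal{L}_1$ and meeting each limit spiral once. Scaling back, $\lambda_1 D$ and $\lambda_n D$ furnish the outer and inner ``caps''; the compact piece $\widehat L(n)\subset L$ bounded by $\lambda_1D$, $\lambda_nD$ and $C_\delta$ has total boundary flux zero, the flux across $L\cap\lambda_1D$ is fixed, and the flux across $L\cap\lambda_nD$ tends to zero because $\mathrm{length}(L\cap\lambda_nD)\to 0$. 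This forces the spiral flux (which differs from $F(t_n)$ by a bounded amount) to stay bounded, giving the contradiction. The rescaling--limit--transversal-disk package is precisely what produces closing arcs of controllable length near the origin; a direct construction as you outline does not.
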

\begin{proof}
  Arguing by contradiction, we may assume that there exists a sequence $t_n\to \infty $ such that
$|F(t_n)|\to \infty $. Consider the sequence $\l _n=|\G (1)(\t _1+t_n)|>0$, which converges to zero
as $n\to \infty $. After passing to a subsequence, the laminations $\frac{1}{\l_n}[L\cup (\{ x_3=0\} -\{ \vec{0}\} )]$
converge to a minimal lamination ${\cal L}_1$ of $\{ x_3\geq 0\} -\{ \vec{0}\} $ with quadratic decay of curvature.
Note that ${\cal L}_1$ contains a leaf $L_1$ which passes through the limit point $q_{\infty }$ of the sequence
$\frac{1}{\l _n}\G (1)(\t _1+t_n)$, which lies in $C_{\de }\cap \esf^2(1)$
(in particular, $x_3(q_{\infty })=\frac{\de }{\sqrt{1+\de ^2}}>0$).

We claim that ${\cal L}_1$ is not flat. Otherwise, ${\cal L}_1$ contains the plane
$\{ x_3=\frac{\de }{\sqrt{1+\de ^2}}\} $ as a leaf. Therefore, for $n$ large
$\frac{1}{\l_n}L$ contains an almost horizontal compact
disk arbitrarily close to the disk $\{ (x_1,x_2,\frac{\de }{\sqrt{1+\de ^2}} )\ | \ x_1^2+x_2^2\leq
1+\frac{1}{1+\de ^2}\}$. Since $L$
is embedded, this contradicts the existence of the proper spiraling curves $\G (i)$, $i=1,\ldots ,k$. Therefore,
${\cal L}_1$ is not flat. The same argument proves that the leaf $L_1$ of ${\cal L}_1$ passing through $q_{\infty }$
is not flat.

Since $L_1$ is not flat, then $L_1$ contains $\vec{0}$ in its closure (argue as in the last sentence before the
statement of Assertion~\ref{ass4.7}). The proof of Assertion~\ref{ass4.7} applies without changes here and gives
that lim$(L_1)=\{ x_3=0\} -\{ \vec{0}\} $ and $L_1$ is proper in $H^+$, and thus Lemma~\ref{lema1new} insures that
the intersection $\Delta _1$ of $L_1$ with $C_{\de }$ consists of curves of type I or II. Type $I$ curves
of $\Delta _1$ cannot occur, since we have explained that each of the types I and II persists
after changing scale and taking limits.

The same analysis in the last paragraph shows that every leaf of ${\cal L}_1$ different from $\{ x_3=0\} -\{ \vec{0}
\} $ contains $\vec{0}$ in its closure, has $\{ x_3=0\} -\{ \vec{0}\} $ as limit set in $\R^3-\{ \vec{0}\} $, is proper in $H^+$
and intersects $C_{\de }$ in finitely many curves of type II.
The non-existence of limit leaves of ${\cal L}_1$ in $H^+$
(by Corollary~\ref{sc1}) implies that ${\cal L}_1$ consists of a finite number of leaves.
The same argument (by Corollary~\ref{sc2}) shows that
the multiplicity of the limit $\frac{1}{\l_n}[L\cup (\{ x_3=0\} -\{ \vec{0}\} )]\to {\cal L}_1$ is one; consequently,
$\mathcal{L}_1\cap C_{\de }$ consists of precisely $k$ spiraling curves,
each of which is a limit of a rescaled spiraling curve in $L$.
This correspondence allows us to label the curves in $\mathcal{L}_1\cap C_{\de }$
with the same indices as the related curves in $L\cap C_{\de }$.

Consider an auxiliary smooth
compact disk $D\subset H^+-C_{\de }^-$ with $\partial D\subset C_{\de }$, satisfying the following
properties:
\begin{enumerate}[(P1)]
\item $D$ intersects transversely the union of the leaves in ${\cal L}_1$.
\item $\partial D$ intersects transversely each of the
spirals in $L_1\cap C_{\de }$ in a single point.
\end{enumerate}
 After shrinking back to the original scale, we find a sequence of compact disks $\l _nD\subset H^+-C_{\de }^-$
 which for $n$ sufficiently large, satisfy the properties
\begin{enumerate}[(P1)']
\item $\l_nD$ intersects transversely $L$.
\item $\l _n\partial D$  intersects transversely each of the
spirals $\G (1),\ldots ,\G (k)\subset L_1$ in points of the form $\G (i)(\t _i+t_{i,n})$.
We can assume without loss of generality that $q_{\infty }$ is the limit of rescaled points
in $\G (1)(\t _1+t_{1,n})$.
\item $|t_n-t_{1,n}|\to 0$ as $n\to \infty$ (since the angle coordinate is invariant under change of scale and
the points $\G (1)(\t _1+t_n)$ converge after rescaling by $1/\l _n$ to $q_{\infty }$ as $n\to \infty $).
\item $|t_n-t_{i,n}|$ is bounded independently of $i,n$ for each $i=2,\ldots ,k$ (this holds since $|\t _i-\t _1|\leq 2\pi $
and by property (P3)').
\end{enumerate}
As $|F(t_n)|\to \infty $ by hypothesis and the sum of lengths of the $\G (i)$ from
$\t _i+t_n$ to $\t _i+t_{i,n}$ tends to zero as $n\to \infty $ (by properties (P3)', (P4)'), then
\begin{equation}
\label{fluxtin}
\sum _{i=1}^k\mbox{Flux}\left( \nabla x_3,\G (i)|_{[\t _i,\t _i+t_{i,n}]}\right) \to \infty \mbox{ as }n\to \infty .
\end{equation}
We will next obtain the desired contradiction as an application of the fact that the total flux of a
compact minimal surface along its boundary vanishes. To do this, consider the compact minimal surface
with boundary $\wh{L}(n)\subset L$ bounded by $\l _1D$,
$\l _nD$ and $C_{\de }$, where $n$ is chosen large enough so that $(\l _1D)\cap (\l _nD)=\mbox{\O }$. Since the flux of
$\nabla x_3$ along $\l _1D$ is a fixed number and the flux of $\nabla x_3$ along $\l _nD$ tends to zero as $n\to \infty $
(because the length of $(\l _nD)\cap L$ tends to zero), then we conclude that the last displayed summation is bounded as
$n\to \infty $, which is a contradiction.
\end{proof}

By the description above, each of the components of $L\cap C_{\de }^-$ can be considered to be a graph of a
function $u_i=u_i(r,\t )$ defined over a region $\Omega _i$
in the universal cover of the punctured plane $\{x_3=0\} -\{ \vec{0}\} $, of the type
\[
\Omega _i=\{ (r,\t )\ | \ r>r_i(\t ),\ \t \in \R \} ,
\]
where the function $r_i(\theta )$ was defined before the statement of Lemma~\ref{lema4.9}.
In particular, the restriction of $u_i$ to the quadrant
$\{ (r,\t )\ | \ r\geq 1,\ \t \geq \t _1\} $ is an
$\infty $-valued graph in the sense of Colding-Minicozzi.

Consider the compact region $R=\overline{C_{\de }^-}\cap \{ (x_1,x_2,x_3)\ | \
x_1^2+x_2^2\leq 1\} $. By Lemma~\ref{lema1new}, each component of the intersection of
$L\cap R$ with any vertical halfplane $\{ \t =\mbox{constant}\} $ has length not greater than 2 for $\de >0$
sufficiently small. This property together with Lemma~\ref{lema4.9} and the Divergence Theorem implies that
the following function is bounded:
\begin{equation}
\label{eq:hatflux}
\wh{F}\colon (0,\infty )\to \R, \quad
\wh{F}(t)=\sum _{i=1}^k\mbox{Flux}\left( \nabla x_3,\g _{i,t}\right) ,
\end{equation}
where $\g_{i,t}$ is given in cylindrical coordinates by $\g _{i,t}(\theta )=(1,\t ,u_i(1,\t ))$, $\t \in [\t _i,\t _i+t]$.

The desired contradiction in this case of type II curves will come from application of a slight adaptation of
inequality (5.3) in the proof
of Theorem~1.1 in Colding and Minicozzi~\cite{cm26}. To be more precise, inequality (5.3) states
(with the notation in~\cite{cm26}):
\[
\sum _{j=1,2}\int _1^{r}\int _0^t\frac{|w_j(e^s,0)|}{t}ds\, dt-\sum _{j=1,2}\int _0^{2\pi }\int _{\partial \wt{D}_1\cap Q}
u_j(e^{x+i(y+\tau )}d\tau
-6\pi \sum _{j=1,2}u_j^3(1,0)\log r
\]
\[
\leq -\int _0^{2\pi }\int _1^r\left( \sum _{j=1,2}\int _{(\Sigma _j)_{1,1}^{\tau ,t+\tau }}
\frac{\partial x_3}{\partial \eta }\right) \frac{1}{t}dt\, d\tau \leq 2C_1\log r. \qquad (\star )
\]
In our application, we have $k$ $\infty $-valued minimal graphs instead of two; the last inequality in
$(\star )$ holds in our case by Lemma~\ref{lema4.9}, since the summation in the parenthesis is essentially the function
  $\wh{F}(t)$ defined in (\ref{eq:hatflux}). This is a contradiction (for $r$ large) since the
left-hand-side of $(\star)$ has three terms, of which the third one grows as $\log r$, the second one
is a fixed constant not depending on $r$ and the first one can be bounded below by a positive constant times
$(\log r)^2$ by integrating the first inequality in Theorem~1.3 of~\cite{cm26} (which is valid
in our situation since the $\infty $-valued minimal graphs are defined for $r\in [1,\infty )$ and
their gradient can be made arbitrarily small by choosing $\de >0$ small enough). This contradiction
finishes the analysis of type II curves.

\begin{remark}
{\rm In the above analysis of type I and II curves in $\Lambda $ we have used a flux argument based
on Colding-Minicozzi theory. In the appendix we provide a self-contained argument to get the
same conclusions as above, by showing that a non-proper leaf $L$ as in Lemma~\ref{lema2new} is
recurrent for Brownian motion,
which by the Liouville theorem applied to the positive third coordinate function $x_3|_L$, gives the desired contradiction
to the existence of such an $L$. See Proposition~\ref{propos1new} for details.
}
\end{remark}

We are now ready to prove the main result of this section, which
will be improved in Corollary~\ref{corollamin}.

\begin{proposition}
\label{propos1}
Let ${\cal L}$ be a non-flat minimal lamination of
$\R^3-\{ \vec{0}\} $ with quadratic decay of curvature. Then, any
leaf of ${\cal L}$ is a properly embedded minimal surface in
$\R^3-\{ \vec{0}\} $, and ${\cal L}$ does not contain flat leaves.
\end{proposition}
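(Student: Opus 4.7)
The proof plan is two-fold: first show that every leaf of $\mathcal{L}$ is proper in $\R^3-\{\vec{0}\}$, and then deduce that $\mathcal{L}$ has no flat leaves.

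For properness, I would argue by contradiction. Suppose some leaf $L$ of $\mathcal{L}$ is not proper in $\R^3-\{\vec{0}\}$. Lemma~\ref{lema2new} then gives, after a rotation, that $L\subset H^+$ and $\lim(L)=\{x_3=0\}-\{\vec{0}\}$, so that $L$ is proper in $H^+$. Lemma~\ref{lema1new} supplies the angle estimate $|\nabla_L x_3|\le \ve\, x_3/R$ inside $C_\de^-$ for any $\ve>0$ and $\de>0$ sufficiently small, which forces $L$ to intersect $C_\de$ transversely along almost-horizontal curves, and these must therefore be either all of type I or all of type II, as discussed just after Lemma~\ref{lema1new}. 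Subsection~\ref{subsect1} has already derived a contradiction in the type I case: one blows up at a sequence $q_n\to \vec{0}$ with $(|K_L|R^2)(q_n)\ge 1$ (Assertion~\ref{ass4.6}) to obtain a non-flat leaf $L_1$ of a blow-up lamination which is again proper in $H^+$ with $\lim(L_1)=\{x_3=0\}-\{\vec{0}\}$; Assertion~\ref{ass4.8} then constructs a topological plane $P(\Delta_1)$ transverse to $L_1$, and the rescaled disks $|q_n|\Delta_1$ produce proper subdomains of $L$ whose $\nabla x_3$-fluxes violate flux monotonicity. Subsection~\ref{subsect2} has similarly ruled out the type II case: Lemma~\ref{lema4.9} establishes boundedness of the flux function $F$, and this, combined with the Colding--Minicozzi inequality (5.3) of~\cite{cm26} applied to the $k$ spiraling $\infty$-valued graphs, produces the contradiction. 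Hence no non-proper leaf can exist.

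For the no-flat-leaves conclusion, suppose $\mathcal{L}$ contains a flat leaf $P$. Then $\overline{P}$ is a plane $\Pi$, and either $P=\Pi$ (if $\vec{0}\notin \Pi$) or $P=\Pi-\{\vec{0}\}$. Since $\mathcal{L}$ is non-flat, pick a non-flat leaf $L_0$, which by the properness step just established is proper in $\R^3-\{\vec{0}\}$. Connectedness of $L_0$ together with $L_0\cap P=\emptyset$ places $L_0$ entirely in one of the open half-spaces $H$ bounded by $\Pi$. If either $\vec{0}\notin H$ or $\vec{0}\notin \overline{L_0}$, then $L_0$ is a complete, properly embedded minimal surface contained in the closed half-space $\overline{H}$, and the Half-space Theorem~\cite{hm10} forces $L_0$ to be a plane parallel to $\Pi$, contradicting its non-flatness. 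The remaining case where $\vec{0}\in H\cap \overline{L_0}$ (or $\vec{0}\in \Pi\cap \overline{L_0}$) is excluded by the same $\ve$-tubular-neighborhood argument using Lemma~1.3 of~\cite{mr8} that appears in the proof of Lemma~\ref{lema2new}: inside an $\ve$-neighborhood of $\Pi$ at positive distance from $\vec{0}$, each component of $L_0$ is a proper minimal graph over a subdomain of $\Pi$, from which the Half-space Theorem again produces a contradiction.

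The main obstacle is the properness step, and more precisely the technical type I and type II analyses carried out in Subsections~\ref{subsect1} and~\ref{subsect2}: both rest on delicate blow-up arguments combined with flux estimates or Colding--Minicozzi theory, and constitute the true content of the proposition. Once those contradictions are in place, the proof is essentially an assembly, and the no-flat-leaves statement follows quickly from the Half-space Theorem applied to a non-flat proper leaf trapped on one side of the plane $\overline{P}$.
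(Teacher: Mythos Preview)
Your properness argument matches the paper's proof exactly: contradiction via Lemmas~\ref{lema2new} and~\ref{lema1new}, then the type~I/type~II dichotomy ruled out by the flux arguments of Subsections~\ref{subsect1} and~\ref{subsect2}.

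Your no-flat-leaves argument has a gap in the case where the plane $\Pi=\overline{P}$ passes through $\vec{0}$ and $\vec{0}\in\overline{L_0}$. You invoke the $\ve$-tubular-neighborhood argument of Lemma~1.3 in~\cite{mr8} (as in the proof of Lemma~\ref{lema2new}), but that argument requires the $\ve$-neighborhood $\Pi(\ve)$ to lie at positive distance from $\vec{0}$ so that $|K_{L_0}|$ is bounded there and the components of $L_0\cap\Pi(\ve)$ are proper graphs. When $\vec{0}\in\Pi$ this fails: the slab $\Pi(\ve)$ contains $\vec{0}$, the curvature of $L_0$ need not be bounded in it, and the graph/Half-space conclusion does not follow. (Your case split is also slightly tangled: the condition ``$\vec{0}\notin H$'' does not by itself imply $L_0$ is complete, since $\vec{0}$ could lie in $\Pi\cap\overline{L_0}$.) The paper handles this case differently: after reducing to $\vec{0}\in\Pi$ and $\vec{0}\in\overline{L_0}$ (by first using Xavier~\cite{xa5} to force $\vec{0}\in\overline{L_0}$, then the proof of the Half-space Theorem to slide $\Pi$ until it passes through $\vec{0}$), it applies the catenoid-barrier argument from the proof of Assertion~\ref{ass4.7}: since $L_0$ is proper in $\R^3-\{\vec{0}\}$, one finds $d>0$ with $\esf^1(1)\times[0,d]$ disjoint from $L_0$, then uses $L_0$ as a barrier to produce least-area catenoids with boundary $\esf^1(1)\times\{d\}\cup\esf^1(1/n)\times\{0\}$, which cannot exist for $n$ large. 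That is the missing ingredient.
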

\begin{proof}
Arguing by contradiction, suppose $L$ is a leaf of ${\cal L}$ which
is not proper in $\R^3-\{ \vec{0}\} $. By Lemmas~\ref{lema2new} and~\ref{lema1new},
after a rotation we can assume $L\subset H^+$, lim$(L)=\{ x_3=0\} -\{ \vec{0}\} $
and for $\de >0$ sufficiently small, we have the description of the set $\Lambda $
of components of $L\cap C_{\de }$ given just after Lemma~\ref{lema1new}
in terms of curves of types I and II. By the above analysis, neither of these two cases
can occur. This contradiction shows that every leaf $L$ of $\mathcal{L}$ is properly embedded in $\R^3-
\{ \vec{0}\} $.

To finish the proof of the proposition, it remains to show that none of the leaves of
${\cal L}$ are flat. Arguing again by contradiction,
suppose $\widetilde{L}\in {\cal L}$ is a flat leaf and let
$\widehat{L}\in {\cal L}$ be a non-flat leaf. If $\widehat{L}$ does
not limit to $\vec{0}$, then $\widehat{L}$ has bounded curvature,
and so, it cannot be contained in a halfspace~\cite{xa5}.
This contradicts the Half-space Theorem as $\wh{L}$ lies
at one side of the closure of $\wt{L}$, which is a plane.
Hence, $\vec{0}$ is a limit point of $\widehat{L}$.

Now consider the sublamination $\widetilde{\cal L}=\{
\widehat{L},\widetilde{L}\} $. Suppose that $\widetilde{L}$ has
$\vec{0}$ in its closure and we will obtain a contradiction. After a
rotation, assume that $\widetilde{L}$ is the $(x_1,x_2)$-plane and
the third coordinate of $\widehat{L}$ is positive.
In this setting, the proof of Assertion~\ref{ass4.7} applies to $L_1=\wh{L}$
and gives the desired contradiction.

So we may assume that $\widetilde{L}$ is a plane which does not pass
through $\vec{0}$. As $\wh{L}$ is properly embedded in $\R^3-\{ \vec{0}\} $,
the proof of the Half-space Theorem~\cite{hm10} gives that
the distance between $\widehat{L}$ and
$\widetilde{L}$ is positive. Consider the plane $\Pi $ parallel to
$\widetilde{L}$ at distance $0$ from $\widehat{L}$. Since
$\widehat{L}$ is not a plane, $\Pi $ must go through the origin, and
we finish as before. Now the proof is complete.
\end{proof}

\section{The proof of Theorem~\ref{tt2}.}
\label{sectionproofremov}
We will divide the proof in five different cases.
\par
\vspace{.2cm}
{\sc Case I:} {\bf Suppose that $N=\R^3$, $p=\vec{0}$
{\rm (hence $\overline{B}_N(p,r)= \overline{\B} (r)$)} and ${\cal L}$
consists of a single leaf $M$ which is properly
 embedded in $\overline{\B} (r)-\{ \vec{0}\} $.}
\par
\vspace{.2cm}
\noindent
 In this case it is known that the area of $M$ is finite and $M$
 satisfies the monotonicity formula, see for instance Harvey and
 Lawson~\cite{hl1}. For the sake of
completeness, we give a self-contained proof in our setting.

For $0 < R_1\leq R_2\leq r$, let $A_M(R_1)=\mbox{Area}(M\cap \B
(R_1))$, $l_M(R_1)=\mbox{Length}(M\cap \esf^2 (R_1))\in (0,\infty ]$
and $A_M(R_1,R_2)=\mbox{Area}(M\cap [\B (R_2)-\B (R_1)])\in
(0,\infty )$. The Divergence Theorem applied to the vector field
$p^T=p-\langle p,\vec{n}\rangle \vec{n}$ (here $\vec{n}$ is the
Gauss map of $M$) gives:
\begin{equation}
\label{Stokes}
2A_M(R_1,R_2)=\int _{M\cap [\B (R_2)-\B (R_1)]}\mbox{div}_M(p^t)=
\int _{\partial _{R_1}}\langle p,\nu \rangle +
\int _{\partial _{R_2}}\langle p,\nu \rangle  ,
\end{equation}
where $\partial _{R_i}=M\cap \esf^2(R_i)$, $i=1,2$,
and $\nu $ is the unit exterior conormal vector to
$M\cap [\B (R_2)-\B (R_1)]$ along its boundary. The first integral
in the right-hand-side is not positive, and
Schwarz inequality applied to the second one gives $2A_M(R_1,R_2)\leq
R_2\, l_M(R_2)$. Taking $R_1\to 0$ and relabeling
$R_2$ as $R$, we have
\begin{equation}
\label{eq:removsingD}
2A_M(R)\leq R\, l_M(R)\quad \mbox{for all }R\in [0,r].
\end{equation}
In particular, the total area of $M$ is finite. Next we observe
that the monotonicity formula holds in our setting (i.e. $R^{-2}A_M(R)$ is
not decreasing for $R\in [0,r]$). To see this, note that
\begin{equation}
\label{eq:removsingB}
R^3\frac{d}{dR}\left( \frac{A_M(R)}{R^2}\right)=R\ A_M'(R)-2A_M(R).
\end{equation}
The coarea formula applied to the radial distance function $R$ to $\vec{0}$ gives
\begin{equation}
\label{eq:removsingC}
A_M'(R)=\int _{\partial _R}\frac{ds}{|\nabla R|}\geq l_M(R)
\end{equation}
where $\nabla R$ is the intrinsic gradient of $R|_M$ and $ds$ is the
length element along $\partial _R$. Now (\ref{eq:removsingD}),
(\ref{eq:removsingB}) and (\ref{eq:removsingC}) imply the monotonicity formula.

As an important consequence of the finiteness of its area together
with the monotonicity formula, $M$ has limit tangent cones at the
origin under expansions. To prove that $M$ extends to a smooth
minimal surface in $\overline{\B }(R_1)$, we discuss two situations
separately. In the first one (paragraph I.1 below)
we will deduce that $M$ has finite
topology, in which case the removability theorem is known (see
\cite{cs1}, although we also provide a proof of the
removability of the singularity in this situation),
 and to conclude the proof in Case I, we will show that
 the second situation (paragraph I.2 below) cannot hold.
\vspace{.2cm}

{\sc I.1.} Suppose there exist constants $C_1<1$ and $r'\leq r$ such that
$|K_M|R^2\leq C_1$ in $M\cap \B (r')$. Using the
arguments in the proof of Lemma~\ref{lemma2}, we deduce
that $M\cap \B (r')$ consists of a finite number of
annuli with compact boundary, transverse to the spheres
centered at the origin such that each of these annuli
has  $\vec{0}$ in its closure, together with a finite
number of compact disks. Thus, for $r'$ sufficiently small we may assume
that there are no such disk components. Let $A$ be one
of the annuli in $M\cap \overline{\B }(r')$. If $A$ is
conformally $\{ \ve <|z|\leq 1\} $ for some $\ve >0$,
then each coordinate function of $A$ can be reflected
in $\{ |z|=\ve \} $ (Schwarz's reflection principle),
 defining a conformal branched harmonic map which
carries the entire curve $\{ |z|=\ve \} $ to a single
point, which is impossible. Thus, $A$ is conformally
$\{ 0<|z|\leq 1\}$ and so, its coordinate
functions extend smoothly across $\vec{0}$, defining a
possibly branched minimal surface $A_0$ that passes
through~$\vec{0}$. If $\vec{0}$ is a branch point of
$A_0$, then $A$ cannot be embedded in a punctured
neighborhood of $\vec{0}$, which is a contradiction.
Therefore, $A_0$ is a smooth embedded minimal surface
passing through~$\vec{0}$. Since $M$ is embedded, the
usual maximum principle for minimal surfaces implies
that there exists only one such surface $A_0$, and
the theorem holds in this case.
\vspace{.2cm}

{\sc I.2.} Now assume that there exists a sequence $\{
p_n\} _n\subset M$ converging to $\vec{0}$ such that
 $1\leq |K_M|R^2(p_n)$ for all $n$, and we will obtain
 a contradiction. The expanded surfaces
 $\widetilde{M}_n=\frac{1}{|p_n|}M\subset \R^3-
 \{ \vec{0}\} $ also satisfy $|K_{\widetilde{M}_n}|R^2
 \leq C$. After choosing a subsequence, the
 $\widetilde{M}_n$ converge to a minimal lamination
 ${\cal L}_1$ of $\R^3-\{ \vec{0}\} $ with
 $|K_{{\cal L}_1}|R^2\leq C$. Furthermore,
 ${\cal L}_1$ contains a non-flat leaf $L_1$ passing
through a point in $\esf^2(1)$, where it has absolute
Gaussian curvature at least $1$.
By the
monotonicity formula, $R^{-2}A_M(R)$ is bounded as
$R\to 0$. Geometric measure theory implies that any
sequence of expansions of $M$ converges (up to a
subsequence) to a minimal cone over a configuration of
geodesic arcs in $\esf^2(1)$. Since any smooth point of
such a minimal cone is flat, we contradict the
 existence of the non-flat minimal leaf $L_1$.
This finishes the proof of Case {\sc I.}
\par
\vspace{.2cm}
{\sc Case II:} {\bf Suppose that $N=\R^3$, $p=\vec{0}$
and ${\cal L}$ consists of a possibly disconnected,
 properly embedded minimal surface in
 $\overline{\B}(r)-\{ \vec{0}\} $.}
\par
\vspace{.2cm}
\noindent
Consider the intersection of ${\cal L}$ with the
closed ball of radius $r'\in (0,r)$. We claim that
every leaf $L$ of ${\cal L}$
having $\vec{0}$ in its closure intersects
$\partial \overline{\B}(r')$; otherwise, the supremum of the
function $R|_L$ is some $r_1\leq r'$ and there exists a leaf
$L_1$ in the closure of $L$ that lies at the inner side
of $\esf ^2(r_1)$ and touches this sphere at some point,
which contradicts the mean comparison principle for $L_1$ and
$\esf^2(r_1)$ and proves our claim.
 Since $\partial \overline{\B}(r')$ is compact, we conclude that
 there are at most a finite number of leaves of ${\cal L}$
having $\vec{0}$ in its closure. If there are
two leaves of ${\cal L}\cap \overline{\B}(r')$
which have $\vec{0}$ in their closure, then each of
these leaves extends smoothly across the origin
by the previous Case I, and we contradict the
maximum principle for minimal surfaces. Therefore,
at most one leaf of ${\cal L}$ has the origin in its closure,
and the other components, which are compact, do not
intersect a certain ball $\overline{\B }(r')$ for some
$r'\in (0,r)$ small enough. Hence, ${\cal L}$
extends in this case.
\par
\vspace{.2cm}
{\sc Case III:} {\bf Suppose that $N=\R^3$, $p=\vec{0}$
and ${\cal L}$ is a minimal lamination which
does not intersect any small punctured neighborhood of
$\vec{0}$ in a properly embedded surface} (note that
Cases I--III finish the
 $\rth$-setting of Theorem~\ref{tt2}).
\par
\vspace{.2cm}
\noindent
In this case, every punctured neighborhood of
$\vec{0}$ intersects a limit leaf of~${\cal L}$.
Since the set of limit leaves of ${\cal L}$ is closed,
it follows that  ${\cal L}$ contains a limit leaf $F$
with $\vec{0}$ in the closure of $F$.

We claim that any blow-up limit of ${\cal L}$ from
$\vec{0}$ converges outside $\vec{0}$ to a flat
 lamination of $\R^3$ by planes. Since
 $|K_{\cal L}|R^2$ is scale-invariant, our claim
follows by proving that for any $\ve >0$, there is
$r(\ve )\in (0,r)$ such that $|K_{\cal L}|R^2<\ve $ on
${\cal L}\cap \B (r(\ve ))$. Arguing by contradiction,
suppose there exists a sequence of points
$q_n\in {\cal L}$ converging to $\vec{0}$ such that
$|K_{\cal L}|(q_n)|q_n|^2$ is bounded away from zero.
Then, after expansion of ${\cal L}$ by
$\frac{1}{|q_n|}$ and taking a subsequence, we obtain a
limit which is a non-flat minimal lamination
${\cal L}_1$ of $\R^3-\{ \vec{0}\}$ that satisfies the
hypotheses in Proposition~\ref{propos1}. In
particular, ${\cal L}_1$ does not contain flat leaves.
The limit leaf $F$ in ${\cal L}$ produces under
expansion a leaf $F_1$ (which is stable
since $F$ is stable) in ${\cal L}_1$. Since $F_1$ is
complete outside the origin, the stability lemma implies
that $F_1$ is a plane, which is a contradiction. Now our claim is proved.

By the above claim, we know that any blow-up limit of
${\cal L}$ is a minimal lamination of $\R^3-
\{ \vec{0} \}$ by parallel planes. It follows that for
$\ve > 0$ sufficiently small, in the annular domain
$A = \{ x \in \rth \mid \frac{1}{2} \leq |x| \leq 2 \}$
the normal vectors to the leaves of ${\cal L}_\ve =
(\frac{1}{\ve} {\cal L}) \cap A$ are almost parallel,
and after a rotation (which might depend on $\ve $),
we will assume that the unit normal
vector to the leaves of ${\cal L}_{\ve }$ lies in a small
neighborhood of $\{ \pm (0,0,1)\} $.
Hence, for such a sufficiently small $\ve$, each
 component $C$ of ${\cal L}_\ve$ that intersects
${\esf}^2(1)$ is of one of the following four types, see
Figure~\ref{fig4new}:
\begin{figure}
\begin{center}
\includegraphics[width=14cm,height=6.7cm]{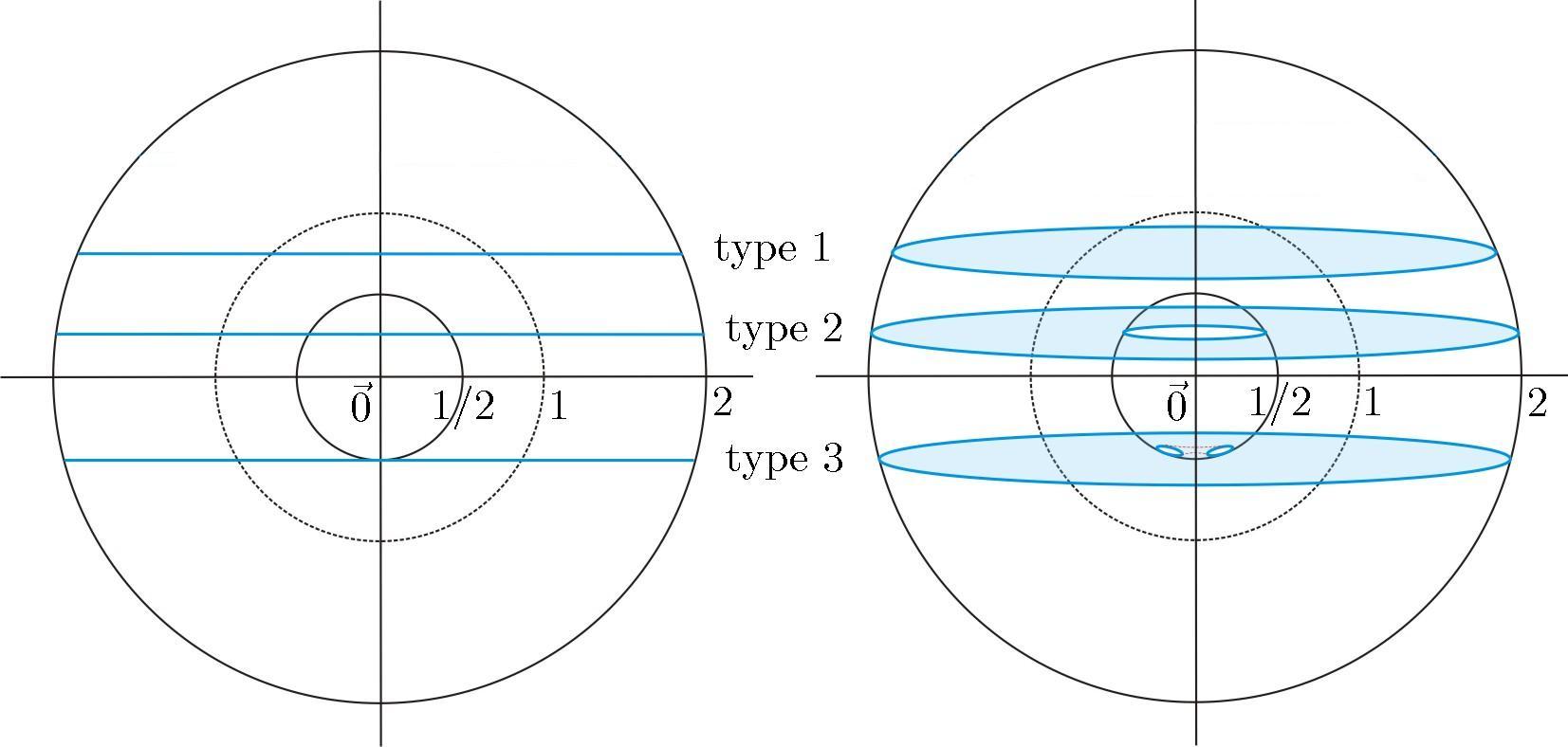}
\caption{Type 1, 2, 3 connected components of ${\cal L}_{\ve }$.}
\label{fig4new}
\end{center}
\end{figure}
\begin{enumerate}
\item A compact disk with boundary in
${\esf}^2(2)$;

\item A compact annulus with one boundary curve in
${\esf}^2 (\frac{1}{2})$ and the other boundary
     curve in ${\esf}^2 (2)$;

\item A compact planar domain whose boundary
 consists of a single closed curve in
 ${\esf}^2 (2)$ together with at least two closed
 curves in ${\esf}^2 (\frac{1}{2})$,  and
where the outer boundary curve bounds a compact disk in
$\frac{1}{\ve} {\cal L}$;

\item An infinite multigraph $\mathcal{M}$ whose limit set
 consists of two compact components of ${\cal L}_{\ve }$
of type~2. To see why this is the only possibility for
the limit set lim$(\mathcal{M})$ of $\mathcal{M}$,
note that lim$(\mathcal{M})$ cannot contain a component of type 1
by an elementary covering argument. Also, lim$(\mathcal{M})$
can be supposed not to contain components of type 3,
by choosing $\ve $ smaller.
\end{enumerate}

One consequence of the description above is that if a component
$\mathcal{M}$ of ${\cal L}_{\ve }$ of type~4 occurs, then $\mathcal{M}$ cannot intersect the
complement of the slab $\{ |x_3|\leq \frac{1}{2}\} $ in $A$. In fact,
by taking $\ve >0$ small enough, we conclude that spiraling multigraph
components of ${\cal L}_{\ve }$ can only occur in the intersection of $A$
with an open slab $\Delta $ of small width around height $0$.
Note that the intersection of a spiraling component $\mathcal{M}$ with $\esf^2(2)$
is an embedded spiraling curve
$\G (2)$ contained in $\Delta $ that limits to two closed, pairwise disjoint
curves $C_1(2),C_2(2)\subset \esf^2(2)\cap \overline{\Delta }$ which are almost horizontal,
and the same description holds for the intersection
of $\mathcal{M}$ with $\esf^2(\frac{1}{2})$, defining an embedded spiraling curve
$\G (\frac{1}{2})$ contained in $\esf^2(\frac{1}{2})\cap \Delta $
that limits to two closed, pairwise disjoint, almost horizontal
curves $C_1(\frac{1}{2}),C_2(\frac{1}{2})\subset \esf^2(\frac{1}{2})\cap \overline{\Delta }$.
In fact, this description also holds for the intersection of $\mathcal{M}$
with every intermediate sphere $\esf^2(\tau )$, $\frac{1}{2}\leq \tau \leq 2$, and the
union of the related closed curves $\cup _{\tau \in [\frac{1}{2},2]}C_j(\tau )$, $j=1,2$,
defines the two compact components of ${\cal L}_{\ve }$ of type 2 in the limit set of $\mathcal{M}$.

Next we check that type~4 components of
${\cal L}_{\ve }$ cannot occur for $\ve >0$
sufficiently small: if for some sufficiently small
$\ve_0 $, $ {\cal L}_{\ve{_0}}$ has a component of type 4,
then this multigraph component persists for all
$\ve \in (0,\ve_0)$, varying in a continuous manner
in terms of $\ve$, as well as the two annular components of type 2 which are the limit
set of this multigraph component. Thus, the existence of a multigraph
component in ${\cal L}_{\ve_{0}}$ implies that in the
original scale, ${\cal L}\cap \B (2\ve_0)$ has two
properly embedded  annular leaves in
$\B (2\ve_0) - \{ \vec{0} \}$. By our previously
considered Case I in this proof, these two annular leaves extend
smoothly to two minimal disks that only intersect at the
origin, thereby contradicting the maximum principle
for minimal surfaces. This contradiction shows that
only components of types 1, 2, 3 can occur in
${\cal L}_{\ve}$ for $\ve $ small.

Recall that $F$ is a limit leaf of ${\cal L}$ with $\vec{0}$ in its
closure. We claim that there exists $r_1\in (0,r)$ such that
$F\cap [\overline{\B }(r_1)-\{ \vec{0}\} ]$
is a proper annulus in $\overline{\B }(r_1)-\{ \vec{0}\} $.
This follows from the fact that for any $\ve >0$ small enough,
$F$ produces a type 2 component of ${\cal L}_{\ve }$ (type 4 is
discarded by the previous paragraph, and types 1, 3 can be discarded
by taking $\ve $ small enough). Since $F\cap [\overline{\B }(r_1)-\{ \vec{0}\} ]$
is a proper annulus, we can apply the already proven Case~I
to $F\cap [\overline{\B }(r_1)-\{ \vec{0}\} ]$ and conclude that
$F\cap [\overline{\B }(r_1)-\{ \vec{0}\} ]$ extends smoothly across $\vec{0}$ and
$\overline{F}$ is a compact minimal disk.

Next we claim that for $\ve >0$ small enough and for every leaf component of
${\cal L}\cap \overline{\B }(\ve )$ the tangent planes to this leaf component
at any of its points make an angle less
than $\pi /4$ with the tangent plane $T_{\vec{0}}\overline{F}$.
Otherwise, we find a sequence of points $p_n\in {\cal L}$
converging to $\vec{0}$ as $n\to \infty $, such that the leaves $L_n$ of
${\cal L}$ passing through $p_n$ have tangent planes $T_{p_n}L_n$ making
an angle larger than $\pi /4$ with $T_{\vec{0}}\overline{F}$. After rescaling
${\cal L}$ by the homothety with factor $1/|p_n|$ centered at $\vec{0}$, we produce a
sequence of laminations with the same quadratic decay constant. After passing to a subsequence,
this sequence converges to a family of parallel planes, which is impossible since one of the planes is
$T_{\vec{0}}\overline{F}$ and another one is a plane passing through a point
in $\esf^2(1)$ making an angle greater than $\pi /4$ with $T_{\vec{0}}\overline{F}$.
This contradiction proves our claim.

By the claim in the last paragraph, the inner product of the unit normal vector to
any leaf component of ${\cal L}\cap \overline{\B }(\ve )$ with the normal direction
to $T_{\vec{0}}\overline{F}$ defines a Jacobi function with constant sign on
such a leaf component. Hence, ${\cal L}\cap \overline{\B }(\ve )$ consists of
a collection of stable surfaces. Finally, curvature estimates for stable minimal surfaces
away from their boundaries (Schoen~\cite{sc3})
gives the desired lamination structure for ${\cal L}$ around $\vec{0}$. This
%
 finishes the proof in this Case III.
\par
\vspace{.2cm}
{\sc Case IV:} {\bf Suppose $N$ is a Riemannian manifold,
$p\in N$ and for some $r'\in (0,r)$
${\cal L} \cap B_N(p,r')$ is a non-compact, possibly disconnected,
properly embedded minimal surface $M$ in $B_N(p,r') - \{ p \}$.}
\par
\vspace{.2cm}
\noindent
In this case, $\exp _p$ yields $\R^3$-coordinates on
$B_N(p,r')$ centered at $p \equiv \vec{0}$, for $r'>0$
small enough.
It follows from Theorems~3.1 and 4.1 in~\cite{hl1} that
$M$ is a locally rectifiable stationary current
(relative to its boundary) with finite area.
Hence, under homothetic expansions of coordinates, $M$
has minimal limit tangent cones in $\R^3$
at $\vec{0}$. 

If there exists an $\ve > 0$ and a sequence
$\{ p_n \}_n \subset M$ converging to $p$ such that
$\ve \leq |\sigma _M|(p_n) d_N(p_n,p)$ for all $n$,
then a subsequence of the expanded surfaces
$\widetilde{M}_n = \frac
{1}{d_N(p_n,p)} M$ in $\frac {1}{d_N(p_n,p)} B_N(p,r')$
converges to a non-flat minimal lamination
${\cal L}_{\infty}$ of $\rth-\{ \vec{0}\} $. Since
${\cal L}_{\infty}$ is not flat at some point of
$\esf^2(1)$, then ${\cal L}_{\infty }$ has a leaf which is not a cone,
which contradicts the conclusion of the
previous paragraph.

Hence, any sequence of
homothetic blow-ups of $M$ has a subsequence which
converges (possibly with finite multiplicity) to a minimal lamination
of $\R^3-\{ \vec{0}\} $ by parallel planes. Also, the
arguments in the first paragraph of the proof of
Lemma~\ref{lemma2} imply that
$M$ has a finite number of annular ends.
Furthermore, the fact that under expansions of $M$
we obtain minimal cones in $\R^3-\{ \vec{0}\} $,
 implies that
$M$ intersects almost orthogonally the geodesic
spheres $\partial B_N(p,r'')$ for all
$r''>0$ sufficiently small. Hence, each of the annular
ends of $M$ has linear area growth with respect to
the complete metric $\frac{1}{d_N(p,\cdot )^2} \langle, \rangle$
(here $\langle ,\rangle $ denotes the original metric on $N$).
Therefore, the ends of $M$ are conformally
punctured disks. Standard regularity theory implies
 that the conformal harmonic map from each of these
 annular ends of $M$ into $N$
extends smoothly across the punctured disks to
a conformal harmonic map (see Gr\"{u}ter~\cite{gr1}),
hence to a branched minimal
immersion into $B_N(p,r'')$ as well. Such a branched minimal
immersion is free of branch points, since $M$ is
embedded. Finally, the maximum principle for minimal
surfaces implies that $M$ has only one annular end.
This finishes the proof of Case IV.

To conclude the
proof of the theorem, it remains to solve the following:
\par
\vspace{.2cm}
{\sc Case V:} {\bf Suppose $N$ is a Riemannian manifold,
$p\in N$ and for all $r'\in (0,r)$,
$ {\cal L} \cap (B_N(p,r') - \{ p \})$ is not a
properly embedded minimal surface in
$B_N(p,r') - \{ p \}$.
}
\par
\vspace{.2cm}
\noindent
In this case ${\cal L}$ contains a limit
leaf $L$ with $p \in \overline{L}$. Since the proof of
this case is very similar to the proof of Case III,
we will only comment on the differences. The same arguments
as in Case~III prove that every blow-up limit of ${\cal L}$
from $p$ converges outside $\vec{0}\in \R^3$ to a
lamination of $\R^3$ by planes (only exchange
$|K_{\cal L}|R^2$ by $|\sigma _{\cal L}|\, d_N(p,\cdot )$).
Then, for $\ve >0$ sufficiently small the normal vectors to
the leaves of ${\cal L}_{\ve }=\frac{1}{\ve }(
{\cal L}\cap \{ x\in B_N(p,r')\ | \ \frac{\ve }{2}\leq
\, d_N(p,x)\leq 2\ve \} )$ at any of their points
are almost parallel, where we
are using $\R^3$-coordinates in $B_N(p,r')$ via the
exponential map $\exp _p$ of $N$ for $r'\in (0,r)$
small enough. The components of ${\cal L}_{\ve }$ are
of one of the types 1--4 in Case III (we only exchange
$\esf^2(R)$ by $\frac{1}{\ve }\partial B_N(p,\ve R)$,
where $R=\frac{1}{2}$ or $R=2$). Type 4 components
cannot occur if $\ve >0$ is sufficiently small, by the
same arguments as in Case III (such arguments rest on
the validity of Case II, which we now substitute by
Case IV). By the same arguments as in Case~III, the
limit leaf $L$ is a proper annulus in $B_N(p,r)$ that extends
smoothly across $p$, and every blow-up limit of ${\cal L}$
from $p$ converges outside $\vec{0}\in \R^3$ to a
lamination of $\R^3$ by planes parallel to the tangent
plane $T_p\overline{L}$ to the  extended surface $\overline{L}=L\cup \{ p\} $.

Consider a small geodesic disk $D_{\overline{L}}(p,\de)$ centered at $p$ with radius $\de $
in $\overline{L}$, and let $\eta $ be the unit normal vector field
 to $D_{\overline{L}}(p,\de )$ in $N$.
 Pick coordinates $q=(x,y)$ in $D_{\overline{L}}(p,\de)$
and let $t\in [-\tau ,\tau ]\mapsto
\g _q(t)$ be the unit speed geodesic of $N$ with initial conditions $\g _q(0)=q$,
$\g_q'(0)=\eta (q)$
(here $\tau >0$ can be taken independent of $q\in D_{\overline{L}}(p,\de )$).
Then for some $\tau >0$ small,
$(x,y,t)$ produces ``cylindrical'' normal coordinates in a neighborhood $U$ of $p$ in $N$,
and we can consider the natural projection
\[
\Pi \colon U\to D_{\overline{L}}(p,\de ),\ \Pi (x,y,t)=(x,y).
\]
Since every blow-up limit of ${\cal L}$ from $p$ converges outside $\vec{0}\in \R^3$ to a
lamination of $\R^3$ by planes parallel to $T_{p}\overline{L}$,
we conclude that for $\de $ and $\tau $ sufficiently small,
the angle of the intersection of any leaf component $L_U$ of ${\cal L}\cap U$
with any geodesic $\g _q$ as above can be made arbitrarily close to $\pi /2$. Taking
$\de $ much smaller than $\tau $, a monodromy argument implies any leaf component $L_U$ of ${\cal L}
\cap U$ which contains a point at distance at most $\de /2$ from $p$ is a graph over $D_{\overline{L}}(p,\de )$
(in other words, $\Pi $ restricts to $L_U$ as a diffeomorphism onto $D_{\overline{L}}(p,\de )$).
We will finish this Case~V by proving that
this graphical property implies the desired uniform bound for the second fundamental form
$\sigma _{\cal L}$ of ${\cal L}$ around $p$: otherwise there exists a sequence of points $p_n$ in leaves
$L_n$ of ${\cal L}$ converging to $p$, such that $|\sigma _{L_n}|(p_n)$ diverges.
Without loss of generality, we can assume that $p_n\in U$ and $p_n$ is a point where
the following function attains its maximum:
\[
f_n=|\sigma _{L_n}|\, d_{\overline{L}}(\Pi (\cdot ),\partial D_{\overline{L}}(p,\de ))
\colon L_n\cap U\to [0,\infty ),
\]
where $d_{\overline{L}}$ denotes the intrinsic distance in $\overline{L}$ to the boundary
$\partial D_{\overline{L}}(p,\de )$.
 Now expand the above coordinates $(x,y,z)$ centered at $p_n$ (so that $p_n$
becomes the origin) by the factor $|\sigma _{L_n}|(p_n)\to \infty $. Under this expansion, $U$ converges
to $\R^3$ with its usual flat metric and the geodesics $\g _q$
converge to parallel lines (the canonical coordinates $(x,y,z)$ in $\R^3$ are not necessarily
those coming from the $(x,y,t)$-coordinates in $U$).
The graphical property that $\Pi $ restricts to $L_U$ as a diffeomorphism
onto $D_{\overline{L}}(p,\de )$ gives that after passing to a subsequence, the minimal graphs $L_n\cap U$
converge to a minimal surface in $\R^3$ which is a entire graph. By the Bernstein Theorem, such
a limit is a flat plane. This contradicts that the homothetic expansion factors coincide
with the norms of the second fundamental form of $L_n\cap U$ at $p_n$ for all $n$. This contradiction finishes the proof of Case V,
and thus Theorem~\ref{tt2} is proved.
%
%
%
%
{\hfill\penalty10000\raisebox{-.09em}{$\Box$}\par\medskip}

Theorem~\ref{tt2} supports the conjecture that
a properly embedded minimal surface in a punctured ball
extends smoothly through the puncture. An important
partial result for this conjecture was obtained by
Gulliver and Lawson~\cite{gl1}, who proved it in the special
case that the surface is stable (note that
 Theorem~\ref{tt2} generalizes the result by Gulliver
 and Lawson, by curvature estimates for stable minimal surfaces~\cite{sc3}).
  This isolated singularity conjecture is one of the fundamental open
problems in minimal surface theory, and it is a special case
of the following more general conjecture:

\begin{conjecture}[Fundamental Singularity
Conjecture] \label{conjlamination} Suppose ${\cal S} \subset \rth$
is a closed set whose one-dimensional Hausdorff measure is zero. If
$\lc$ is a minimal lamination of $\rth - {\cal S}$, then ${\cal L}$
extends across ${\cal S}$ to a minimal lamination of $\rth$.
\end{conjecture}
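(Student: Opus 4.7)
The natural strategy is a two-stage attack: reduce the ``countable part'' of $\mathcal{S}$ by transfinite induction on Cantor--Bendixson derived sets (leaning on Theorem~\ref{tt2}), and then handle any remaining perfect kernel by a capacity-type argument using $\mathcal{H}^1(\mathcal{S})=0$ in an essential way.

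Set $\mathcal{S}^{(0)}=\mathcal{S}$, let $\mathcal{S}^{(\alpha+1)}$ be the accumulation set of $\mathcal{S}^{(\alpha)}$, and take intersections at limit ordinals. The first step is to show by transfinite induction on $\alpha$ that $\mathcal{L}$ extends to a minimal lamination across $\R^3-\mathcal{S}^{(\alpha)}$. At a successor ordinal, the points of $\mathcal{S}^{(\alpha)}-\mathcal{S}^{(\alpha+1)}$ are isolated in the remaining singular set of the extended lamination, so one wishes to apply Theorem~\ref{tt2} at each. The required curvature bound $|\sigma _{\mathcal{L}}|\,d_N(p,\cdot )\leq C$ should follow from the blow-up machinery of Case~III of the proof of Theorem~\ref{tt2}: its failure would produce a non-flat minimal lamination of $\R^3-\{\vec{0}\}$ with quadratic decay of curvature, and then Proposition~\ref{propos1} together with the Stability Lemma (Lemma~\ref{lema1}) applied to the planar limit leaf arising from the Cantor--Bendixson structure would give a contradiction. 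At a limit ordinal one takes closures and uses the compactness statement after Definition~\ref{deflamination}. This treats countable $\mathcal{S}$ completely.

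If the transfinite process terminates at a nonempty perfect set $\mathcal{P}\subset \mathcal{S}$, then $\mathcal{P}$ is uncountable and totally disconnected and still satisfies $\mathcal{H}^1(\mathcal{P})=0$. The plan would then be: for each $p\in \mathcal{P}$ and each $\ve >0$, cover $\mathcal{P}\cap \overline{\B}(p,r)$ by countably many balls whose radii sum to less than $\ve $, and use the boundary spheres of this cover together with flux and barrier arguments modelled on Section~\ref{sec4} and on the Half-space Theorem~\cite{hm10} to control $\mathcal{L}$ from outside. The target is the global estimate
\[
|\sigma _{\mathcal{L}}|(x)\,\mathrm{dist}(x,\mathcal{S})\leq C\qquad \text{on }\mathcal{L}\cap \B(p,r/2),
\]
after which an exhaustion by open subsets whose boundaries are laminated by $\mathcal{L}$ allows Theorem~\ref{tt2} to be applied along a decreasing family of ``effectively isolated singularities'', extending $\mathcal{L}$ across $\mathcal{P}$ in the limit.

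The main obstacle is this second stage, and it is precisely why the statement remains a conjecture. The hypothesis $\mathcal{H}^1(\mathcal{S})=0$ is very delicate: it forbids $\mathcal{S}$ from containing a rectifiable arc but permits intricate Cantor-type configurations, and no point-by-point blow-up directly reduces such a $\mathcal{P}$ to the isolated-singularity hypothesis of Theorem~\ref{tt2}. What appears to be missing is a generalization of the Stability Lemma in which the conformal factor $1/R^2$ (which produced the complete cylindrical metric used to prove Lemma~\ref{lema1}) is replaced by a weight adapted to the perfect set $\mathcal{P}$, so as to force limit leaves accumulating on $\mathcal{P}$ to be planar. Constructing such a weight while preserving completeness of the induced metric on stable leaves, and then bootstrapping to the curvature estimate displayed above, appears to require genuinely new analytic input beyond what is developed here.
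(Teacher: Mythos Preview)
This statement is a \emph{conjecture}; the paper offers no proof, and you correctly present only a strategy. However, you misidentify where the real obstruction lies. You write that the transfinite induction ``treats countable $\mathcal{S}$ completely'' and that the perfect-kernel stage is ``precisely why the statement remains a conjecture.'' That is not so: the paper states explicitly (in the paragraph just before Conjecture~\ref{conjlamination}) that even the case of a \emph{single} isolated point---a properly embedded minimal surface in a punctured ball, with no curvature hypothesis---is one of the fundamental open problems in the subject. Your successor step already assumes this unsolved case.

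Concretely, your bootstrap for the curvature bound at an isolated $p$ is circular. Theorem~\ref{tt2} takes $|\sigma_{\mathcal{L}}|\,d_N(p,\cdot)\le C$ as a \emph{hypothesis}; its Case~III machinery does not manufacture it. If the bound fails along a sequence $q_n\to p$ with $|\sigma_{\mathcal{L}}|(q_n)\,d_N(p,q_n)\to\infty$, then blowing up at scale $d_N(p,q_n)$ does \emph{not} yield a lamination of $\R^3-\{\vec{0}\}$ with quadratic decay of curvature; the rescaled quantity $|\sigma|R$ is unbounded by construction, so Proposition~\ref{propos1} is inapplicable and no contradiction ensues. The only reason the paper can carry out an analogous induction in Corollary~\ref{corrs} is that it restricts to \emph{stable} leaves, for which Schoen's curvature estimates supply the needed bound a priori; absent stability, nothing in the paper gives you that bound. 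So the difficulty is not confined to the perfect kernel: already the very first isolated point is the problem.
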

Since the union of a catenoid with a plane passing through its waist
circle is a singular minimal lamination of $\rth$ whose singular set
is the intersecting circle, the above conjecture represents the
strongest possible conjecture.
We point out to the reader that Conjecture~\ref{conjlamination} has
a global nature, because there exist interesting minimal laminations
of the open unit ball in $\rth$ punctured at the origin which do not
extend across the origin, see Section~\ref{secex}.  In hyperbolic
three-space $\HH^3$, there are rotationally invariant global minimal
laminations which have a similar unique isolated singularity. The
existence of these global singular minimal laminations of $\HH^3$
demonstrate that the validity of Conjecture~\ref{conjlamination}
must depend on the metric properties of $\rth$.

\section{The characterization of minimal surfaces with quadratic decay of
curvature.}
\label{sec6}
In this section we will prove Theorem~\ref{thm1introd}
stated in the introduction.
\begin{proposition}
\label{proposlamin} Let ${\cal L}$ be a non-flat minimal lamination
of $\R^3-\{ \vec{0}\} $ with quadratic decay of curvature. Then,
${\cal L}$ consists of a single leaf, which extends to a connected,
properly embedded minimal surface in $\R^3$.
\end{proposition}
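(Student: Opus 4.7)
\textbf{Proof plan for Proposition~\ref{proposlamin}.}
The strategy is to first extend $\mathcal{L}$ across the puncture using Theorem~\ref{tt2}, and then rule out the possibility of having more than one leaf by combining stability of limit leaves with the Strong Halfspace Theorem.

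First, I would apply Proposition~\ref{propos1} to conclude that every leaf of $\mathcal{L}$ is properly embedded in $\R^3-\{\vec{0}\}$ and that $\mathcal{L}$ contains no flat leaves. The quadratic curvature decay condition $|K_{\mathcal{L}}|R^2\leq C$ translates immediately into $|\sigma_{\mathcal{L}}|\cdot R\leq \sqrt{2C}$ (using $|\sigma_L|^2 = -2K_L$ on minimal leaves), so the hypothesis of Theorem~\ref{tt2} is satisfied at the puncture $\vec{0}$. Consequently, $\mathcal{L}$ extends to a minimal lamination $\overline{\mathcal{L}}$ of $\R^3$, and the curvature of $\overline{\mathcal{L}}$ is bounded on a neighborhood of~$\vec{0}$.

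Next, I would show that $\overline{\mathcal{L}}$ consists of a single leaf. Suppose for contradiction that $\overline{\mathcal{L}}$ has a limit leaf $F$. By Meeks--P\'erez--Ros~\cite{mpr18}, the two-sided cover of $F$ is a stable minimal surface; since $F$ is complete (as a leaf of a lamination of $\R^3$), the classification of complete stable orientable minimal surfaces in $\R^3$~\cite{cp1,fs1,po1} forces $F$ to be a plane. But no leaf of $\mathcal{L}$ is flat, and the extension of $\mathcal{L}$ across the single point $\vec{0}$ cannot produce a new leaf that was not already present as a (punctured) leaf of $\mathcal{L}$; hence $F$ gives a flat leaf of $\mathcal{L}$, a contradiction. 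Therefore $\overline{\mathcal{L}}$ has no limit leaves, so every leaf of $\overline{\mathcal{L}}$ is properly embedded in $\R^3$.

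Finally, if $\overline{\mathcal{L}}$ had at least two distinct leaves, they would be pairwise disjoint properly embedded minimal surfaces in $\R^3$. The Strong Halfspace Theorem of Hoffman--Meeks~\cite{hm10} then forces every leaf to be a plane, again contradicting the fact that $\overline{\mathcal{L}}$ contains no flat leaf. Hence $\overline{\mathcal{L}}$ has exactly one leaf $M$, which is a connected properly embedded minimal surface in $\R^3$ whose restriction to $\R^3-\{\vec{0}\}$ is the original lamination $\mathcal{L}$.

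The only delicate step is verifying that the extension across the puncture does not manufacture any new flat leaf not already present in $\mathcal{L}$; this follows because Theorem~\ref{tt2} only fills in the point $\vec{0}$ in leaves whose closure contains $\vec{0}$ and preserves the leaf structure away from $\vec{0}$, so any flat leaf in $\overline{\mathcal{L}}$ would restrict to a (punctured) flat leaf in $\mathcal{L}$, contradicting the second conclusion of Proposition~\ref{propos1}.
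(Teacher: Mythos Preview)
Your proof is correct and follows essentially the same approach as the paper's. The only organizational difference is that the paper applies Theorem~\ref{tt2} to each leaf individually (using the proper embeddedness from Proposition~\ref{propos1} to invoke item~2 of Theorem~\ref{tt2}), and then combines the maximum principle at the origin with the Strong Half-space Theorem to conclude uniqueness; you instead apply Theorem~\ref{tt2} to the whole lamination at once, which automatically gives disjoint leaves in $\R^3$ and lets you finish with the Strong Half-space Theorem alone after ruling out limit leaves. Both routes are valid and use the same ingredients.
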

\begin{proof}
By Proposition~\ref{propos1}, each leaf $L$ of ${\cal L}$ is a
minimal surface which is properly embedded in $\R^3-\{ \vec{0}\} $.
Applying Theorem~\ref{tt2} to each $L\in {\cal L}$, we
deduce that $L$ extends to a properly embedded minimal surface in
$\R^3$. Finally, ${\cal L}$ consists of a single leaf by the maximum
principle applied at the origin and the Strong Half-space
Theorem~\cite{hm10}.
\end{proof}

\begin{theorem}
\label{thm2} Let $M\subset \R^3$ be a complete, embedded, non-flat
minimal surface with compact boundary (possibly empty). If $M$ has
quadratic decay of curvature, then $M$ is properly embedded in
$\R^3$ with finite total curvature.
\end{theorem}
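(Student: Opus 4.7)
The plan is to perform a blow-down analysis that reduces the problem to the asymptotic regime handled by Lemma~\ref{lemma2}. Quadratic curvature decay is scale-invariant, so for any sequence $\lambda_n\to 0^+$ the rescaled surfaces $\lambda_n M$ all satisfy $|K|R^2\le C$, giving uniformly bounded curvature on compact subsets of $\R^3-\{\vec 0\}$. The standard lamination compactness recalled at the end of Section~\ref{secex} extracts a subsequential limit $\mathcal{L}_\infty$, a minimal lamination of $\R^3-\{\vec 0\}$ with $|K_{\mathcal{L}_\infty}|R^2\le C$. Proposition~\ref{proposlamin} leaves exactly two possibilities: either $\mathcal{L}_\infty$ is flat, and its parallel-plane leaves, organized through the blow-up origin, collapse to a single plane through $\vec 0$; or $\mathcal{L}_\infty$ is a single non-flat connected leaf that extends across $\vec 0$ to a connected properly embedded minimal surface $\widetilde M\subset \R^3$.

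The crux is to rule out the non-flat alternative. Since $\widetilde M$ is smooth through $\vec 0$, its rescalings $\mu\widetilde M$ converge as $\mu\to\infty$ to the tangent plane $T_{\vec 0}\widetilde M$, and a diagonal argument on the double sequence $\mu_k\lambda_n\to 0$ shows that $T_{\vec 0}\widetilde M$ is itself a blow-down limit of $M$. At the same time, Corollary~\ref{sc2} forces the convergence $\lambda_n M\to \widetilde M$ to have multiplicity one, for otherwise $\widetilde M$ would be a plane. Comparing the multiplicity-one convergence to $\widetilde M$ with the convergence at finer scales to the planar limit $T_{\vec 0}\widetilde M$, one deduces that $\widetilde M$ itself has only one end, which is planar; then the maximum principle (or the Strong Half-space Theorem~\cite{hm10} applied to $\widetilde M$ and a parallel plane slightly above its planar end) forces $\widetilde M$ to be a plane, contradicting non-flatness.

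With non-flat blow-downs excluded, every blow-down limit is a single plane through $\vec 0$, and therefore $\sup_{M\cap(\R^3-\B(R))}|K_M|R^2\to 0$ as $R\to\infty$. Picking $R$ large enough that this supremum lies below the constant of Lemma~\ref{lemma2}, each component of $M\cap(\R^3-\B(R))$ is an annulus asymptotic to a plane or a catenoid. The number of such ends is finite, for instance because $M\cap \esf^2(R)$ is a disjoint union of transverse simple closed curves of controlled total length, and $M$ is connected. Finite total curvature follows since each planar or catenoidal end has finite total curvature and $M\cap \overline{\B}(R)$ is relatively compact (modulo $\partial M$). Properness in $\R^3$ is then immediate from the annular end structure. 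The sharp limit $\lim_{R\to\infty}\sup_{M-\B(R)}|K_M|R^4=C^2$ follows from the explicit asymptotic expansions of catenoidal and planar ends, with $C$ the maximum logarithmic growth. The main obstacle is the second paragraph: the diagonal rescaling, the multiplicity-one reduction via Corollary~\ref{sc2}, and the final planarity contradiction for $\widetilde M$ must be orchestrated carefully.
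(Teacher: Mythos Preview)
Your blow-down strategy matches the paper's, but the second paragraph contains a genuine gap. The diagonal argument with $\mu_k\to\infty$ shows that $T_{\vec 0}\widetilde M$ is a blow-down limit of $M$, but $\mu\widetilde M$ with $\mu\to\infty$ zooms \emph{in} on $\widetilde M$ at the origin; it says nothing about the ends of $\widetilde M$. To control those ends you would need $\nu\widetilde M$ with $\nu\to 0$, and while a diagonal argument shows such limits are again blow-downs of $M$, you have no way to conclude they are flat---that is precisely what you are trying to prove about $M$ in the first place. The sentence ``one deduces that $\widetilde M$ itself has only one end, which is planar'' is therefore unjustified, and without it the Half-space contradiction does not fire. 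A secondary issue: you defer properness of $M$ until after invoking Lemma~\ref{lemma2}, but that lemma requires properness as a hypothesis, so the logical order is circular.

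The paper closes the gap with an area argument rather than a topological one. It first establishes properness of $M$ directly from Proposition~\ref{proposlamin} applied to the closure of $M$ in $\R^3-\{\vec 0\}$ (and, when $\partial M\neq\varnothing$, via a separate limit-leaf argument). Then, assuming a non-flat blow-down limit $\widetilde M=L$ exists along $\frac{1}{|p_n|}M$ with multiplicity one, it invokes the monotonicity formula: $f(R)=R^{-2}\mathrm{Area}(M\cap[\B(R)-\B(R_1)])$ is non-decreasing in $R$. Multiplicity-one convergence gives $\lim_n f(\varepsilon|p_n|)=\varepsilon^{-2}\mathrm{Area}(L\cap\B(\varepsilon))=:l(\varepsilon)$ for each small $\varepsilon>0$; monotonicity then forces $\lim_{R\to\infty}f(R)$ to exist and equal $l(\varepsilon)$ for every such $\varepsilon$. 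But $l(\varepsilon)$ is strictly increasing in $\varepsilon$ because $L$ is not flat, a contradiction. This density comparison is the missing quantitative idea.
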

\begin{proof}
We first check that $M$ is proper when $\partial M$ is empty.
In this case where $\partial M=\mbox{\O }$, the closure of $M$ in $\R^3-\{ \vec{0}\} $ is
a minimal lamination of $\R^3-\{ \vec{0}\} $ satisfying the
conditions in Proposition~\ref{proposlamin}. It follows that $M$ is
a properly embedded minimal surface in $\R^3$ with bounded
curvature.

We now prove that $M$ is also proper when $\partial M \neq \mbox{
\O}$. Since $\partial M$ is compact, we may assume $\vec{0}\notin
\partial M$ by removing a compact subset from $M$. Therefore, there
exists an $\ve >0$ such that $\partial M\subset \R^3-\B (\ve )$.
Thus, Theorem~\ref{tt2} gives that $\overline{M}\cap (\B
(\ve )-\{ \vec{0}\} )$ has bounded curvature, and so, $M$ does as
well (in order to apply Theorem~\ref{tt2} we need $M \cap
(\B (\ve )-\{ \vec{0}\} )$ to be non-empty; but otherwise $M$ would
have bounded curvature so we would arrive to the same conclusion).
If $M$ were not proper in $\R^3$, then $\overline{M}-\partial M$ has
the structure of a minimal lamination of $\R^3-\partial M$ with a
limit leaf $L$ which can be assumed to be disjoint from $M$
(otherwise $M$ is stable with compact boundary, hence $M$ has
finite total curvature by Fischer-Colbrie~\cite{fi1} and thus, $M$ is proper).
Since we may also assume,
after possibly removing an intrinsic neighborhood of $\partial M$,
that $\overline{L}\cap\partial M=\mbox{\O }$, then  $L$ is complete
and stable, and hence, $L$ is a plane. Since $M$ limits to $L$ and
$M$ has bounded curvature, we obtain a contradiction by applying the arguments
in the proof of Lemma~1.3 in Meeks and Rosenberg~\cite{mr8}.
Hence, $M$ is proper regardless of whether or not $\partial M$ is empty.

From now on, we will assume that $M$ is non-compact and properly
embedded in $\R^3$. Since $\partial M$ is compact (possibly empty),
there exists an $R_1>0$ such that $\partial M\subset \B (R_1)$. It
remains to show that $M$ has finite total curvature.

Let $C_1\in (0,1)$ be the constant given by the
statement of Lemma~\ref{lemma2}.
Suppose first that there exists $R_2>R_1$ such that
    $|K_M|R^2\leq C_1$ in $M-\B (R_2)$.
Applying Lemma~\ref{lemma2} to each component of $M-\B (R_2)$, such
components are annular ends with finite total curvature. Since $M$
is proper, there are a finite number of such components as $M\cap
\esf^2(R_2)$ is compact. Thus, $M$  has finite total curvature, which
proves the theorem in this case.

Now assume that there exists a sequence $\{ p_n\} _n\subset M$
diverging to $\infty $ such that $C_1\leq |K_M|(p_n)|p_n|^2$ for all
$n$, and we will find a contradiction. The homothetically shrunk
surfaces $\widetilde{M}_n=\frac{1}{|p_n|}M$ also have quadratic curvature
decay and their boundaries collapse to $\vec{0}$.
Thus, after choosing a subsequence, we may assume that the $\widetilde{M}_n$
converge to a minimal lamination ${\cal L}$ of $\R^3- \{ \vec{0}\} $, and
$|K_{\cal L}|R^2$ also  decays quadratically. Since
$|K_{\widetilde{M}_n}|(\frac{1}{|p_n|} {p_n})\geq C_1$ and we can
assume $\frac{1}{|p_n|}{p_n}\to \widetilde{p}_{\infty }\in
\esf^2(1)$, there exists a non-flat leaf $L\in {\cal L}$ with
$\widetilde{p}_{\infty }\in L$. By
Proposition~\ref{proposlamin}, ${\cal L}=\{ L\} $ and $\overline{L}$
is properly embedded in $\R^3$.
If the convergence of the
$\widetilde{M}_n$ to ${\cal L}$ had multiplicity greater than one,
then $L$ would be flat (see Corollary~\ref{sc2}), but it
is not. Also note that $\overline{L}$ is connected, and so,
it must pass through the origin. Since $\overline{L}$ is properly embedded,
the multiplicity of the limit $\widetilde{M}_n
\to L$ is one and $\vec{0}\in \overline{L}$, then we have $\lim
_{r\to 0}r^{-2}\mbox{Area}(\overline{L}\cap \B (r))=\pi $
(i.e. the density of $\overline{L}$ at the origin is
1) and there exists $\ve >0$ such that for all $\ve '\in (0,\ve ]$,
$\overline{L}\cap \overline{\B }(\ve ')$ consists of a non-flat disk passing
through the origin and transverse to $\esf^2(\ve ')$ along its boundary.

We will study the function
\[
R\in [R_1,\infty )\mapsto f(R)=R^{-2}\mbox{Area}
(M\cap [\B (R)-\B (R_1)]),
\]
which is non-decreasing by the monotonicity formula.
By equation (\ref{Stokes}), we have
\begin{equation}
\label{Stokes1}
f(\ve |p_n|)=\frac{1}{2\ve ^2|p_n|^2}\int _{\partial M}\langle p,\nu \rangle
+ \frac{1}{2\ve ^2|p_n|^2}\int _{M\cap \esf^2(\ve |p_n|)}\langle p,\nu \rangle ,
\end{equation}
where $\nu $ is the unit exterior conormal vector to $M\cap \B (\ve |p_n|)$
along its boundary. Changing variables in the second integral in~(\ref{Stokes1}) we have
\[
f(\ve |p_n|)=\frac{1}{2\ve ^2|p_n|^2}\int _{\partial M}\langle p,\nu \rangle +
\frac{1}{2\ve ^2}\int _{(\frac{1}{|p_n|}M)\cap \esf^2(\ve )}\langle q,\nu \rangle .
\]
Since the sequence $\{ \frac{1}{|p_n|}M\} _n$ converges to $L$ with multiplicity one
on compact subsets of $\R^3-\{ \vec{0}\} $, then
\begin{equation}
\label{eq:limf}
\lim _{n\to \infty }f(\ve |p_n|)=\frac{1}{2\ve ^2}\int _{L\cap \esf^2(\ve )}\langle q,\nu \rangle
=\frac{1}{\ve ^2}\mbox{Area}[L\cap \B (\ve )].
\end{equation}
Since $f$ is monotonically non-decreasing, we conclude from (\ref{eq:limf}) that
$\lim _{R\to \infty }f(R)$ exists, and equals $l(\ve)=\frac{1}{\ve ^2}\mbox{Area}[L\cap \B (\ve )]$.
This is a contradiction, since $l(\ve )$ is strictly increasing as a function of $\ve $ small
as $L$ is not flat.
 This contradiction proves the theorem.
\end{proof}

\begin{corollary}
\label{corollamin} Let ${\cal L}$ be a non-flat minimal lamination
of $\R^3-\{ \vec{0}\} $. If ${\cal L}$ has quadratic decay of
curvature, then ${\cal L}$ consists of a single leaf, which extends
to a properly embedded minimal surface with finite total curvature
in $\R^3$.
\end{corollary}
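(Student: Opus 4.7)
The plan is very short, since the two main ingredients have already been established in the previous Proposition~\ref{proposlamin} and Theorem~\ref{thm2}; the corollary is essentially their concatenation.

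First I would invoke Proposition~\ref{proposlamin} directly with the given $\mathcal{L}$. The hypotheses of that proposition (non-flat minimal lamination of $\R^3 - \{\vec{0}\}$ with quadratic curvature decay) are exactly what we have. The conclusion gives that $\mathcal{L}$ consists of a single leaf $L$, and that $L$ extends to a connected, properly embedded minimal surface $M \subset \R^3$. This takes care of the first half of what we want to prove (single leaf, extension to a properly embedded surface).

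Next I would upgrade the extension from ``properly embedded'' to ``finite total curvature.'' The surface $M$ produced above is a complete (because properly embedded in $\R^3$), connected, embedded minimal surface with empty boundary. Since $M \setminus \{\vec{0}\} \subset L \cup \{\vec{0}\}$ inherits the quadratic decay of curvature from $\mathcal{L}$, and the curvature at the single added point $\vec{0}$ is finite (by smoothness of $M$ there), $M$ satisfies $|K_M| R^2 \leq C'$ on all of $M$ for some possibly larger constant $C'$. Also $M$ is non-flat since $\mathcal{L}$ is non-flat. Thus $M$ satisfies the hypotheses of Theorem~\ref{thm2} with $\partial M = \emptyset$, and the theorem yields that $M$ has finite total curvature.

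Combining these two steps gives both conclusions of the corollary: $\mathcal{L}$ is a single leaf, and it extends to a properly embedded minimal surface with finite total curvature in $\R^3$. The only nontrivial point to be careful about in writing this up is verifying that the quadratic decay of curvature persists after adding the origin back to $L$, which is an immediate consequence of the smooth extension. I do not anticipate any real obstacle here, since the heavy lifting was done in Propositions~\ref{propos1} and~\ref{proposlamin} and in Theorem~\ref{thm2}.
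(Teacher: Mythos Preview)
Your proposal is correct and matches the paper's own proof, which simply states that the corollary ``follows easily from Proposition~\ref{proposlamin} and Theorem~\ref{thm2}.'' The extra care you take in checking that the quadratic curvature decay persists after adding $\vec{0}$ is a reasonable detail to spell out, but the overall structure is identical.
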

\begin{proof}
This follows easily from Proposition~\ref{proposlamin} and Theorem~\ref{thm2}.
\end{proof}

Theorem~\ref{thm1introd} follows immediately from
Theorem~\ref{thm2}. We just remark that the last statement in
Theorem~\ref{thm1introd} follows from the finite total curvature
property, since a non-flat, complete, embedded, non-compact
minimal surface of finite total curvature has a positive number of
catenoidal ends and possibly finitely many planar ends. A simple
calculation shows that the growth constant $C^2$ in
Theorem~\ref{thm1introd} depends on the maximum logarithmic growth
$C$ of the catenoidal ends of $M$.

\begin{remark}
{\rm Given $C>0$, let ${\cal F}_C$ denote the
family of all complete, embedded, connected minimal surfaces
$M\subset \R^3$ with quadratic curvature decay constant $C$,
normalized so that the maximum of the function $|K_M|R^2$ occurs at
a point of $M\cap \esf^2(1)$. In \cite{mpr19} we  applied
Theorem~\ref{thm1introd} to prove that ${\cal F}_C$ is naturally a
compact metric space and that for $C$ fixed, there is a bound on the
genus and number of ends of all surfaces in ${\cal F}_C$ and that the subsets of ${\cal
F}_C$ with fixed topology are compact.}
\end{remark}


\section{Minimal surfaces and minimal laminations with countably many singularities.}
\label{sec10}

In Theorem 1 of~\cite{mpr18} (see also~\cite{mpr19})
we proved that the sublamination
$\mbox{\rm Lim}(\lc) $ of limit leaves of a minimal
 lamination $\lc$ of a three-manifold $N$ consists of stable minimal surfaces,
 and more strongly, their two-sided covers are stable.
 An immediate consequence of this result is that
the set $\mbox{\rm Stab}(\lc)$ of
stable leaves of $\lc$ is a sublamination of $\lc$ with $\mbox{\rm
Lim}(\lc) \subset \mbox{\rm Stab}(\lc)$.  Using
these observations
together with standard curvature estimates~\cite{mpr19,rst1,sc3} for stable
two-sided minimal surfaces away from their boundaries, we will demonstrate the
following consequence of Theorem~\ref{tt2}.
\begin{corollary}
\label{corrs}
Suppose that $N$ is a not necessarily complete Riemannian
three-manifold. If $W\subset N$ is a closed countable subset and
${\cal L}$ is a minimal lamination of $N-W$, then the closure of any
collection of its stable leaves extends across $W$ to a minimal
lamination of $N$ consisting of stable minimal surfaces. In
particular:
\begin{enumerate}
\item The closure $\overline{\mbox{\rm Stab}(\lc)}$ in $N$ of
the collection of stable leaves of $\lc$ is a minimal lamination
of $N$ whose leaves are stable minimal surfaces.
\item  The closure $\overline{\mbox{\rm Lim}(\lc)}$ in $N$ of the collection
of limit leaves of ${\cal L}$ is a minimal lamination
of $N$.
\item If ${\cal L}$ is a minimal foliation of $N-W$, then ${\cal L}$
extends across $W$ to a minimal foliation of~$N$.
\end{enumerate}
\end{corollary}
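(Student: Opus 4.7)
The plan is to reduce the statement to an iterated application of the Local Removable Singularity Theorem (Theorem~\ref{tt2}) via transfinite induction on the Cantor--Bendixson derived sequence of $W$. Let $\Delta \subset \mathcal{L}$ be a collection of stable leaves; since two-sided covers of limit leaves of $\mathcal{L}$ are stable by the main result of \cite{mpr18}, the closure $\overline{\Delta}$ in $N-W$ is itself a sublamination of $\mathcal{L}$ consisting of stable leaves (possibly one-sided, but always with stable two-sided cover). The goal is to extend $\overline{\Delta}$ to a minimal lamination of $N$ whose leaves are stable.

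Define $W^{(0)} = W$, let $W^{(\alpha+1)}$ be the set of accumulation points of $W^{(\alpha)}$, and $W^{(\alpha)} = \bigcap_{\beta<\alpha} W^{(\beta)}$ at limit ordinals. Since $W$ is a closed countable subset of $N$, the Cantor--Bendixson theorem produces a countable ordinal $\alpha_0$ with $W^{(\alpha_0)} = \emptyset$. I would construct, by transfinite induction on $\alpha \leq \alpha_0$, a minimal lamination $\overline{\Delta}_\alpha$ of $N - W^{(\alpha)}$ by stable leaves, with $\overline{\Delta}_0 = \overline{\Delta}$ and $\overline{\Delta}_\alpha$ extending $\overline{\Delta}_\beta$ for $\beta < \alpha$.

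For the successor step $\alpha \to \alpha+1$, any $p \in W^{(\alpha)} - W^{(\alpha+1)}$ is isolated in $W^{(\alpha)}$, so there is $r > 0$ with $B_N(p,r) \cap W^{(\alpha)} = \{p\}$. Then $\overline{\Delta}_\alpha$ restricts to a minimal lamination of $B_N(p,r) - \{p\}$ by stable leaves. Applying Schoen's curvature estimate~\cite{sc3} to the two-sided cover on the metric ball of radius $d_N(q,p)/2$ centered at $q \in B_N(p,r/2) - \{p\}$ (a ball disjoint from $p$ and from $\partial B_N(p,r)$) yields $|\sigma_{\overline{\Delta}_\alpha}|(q) \cdot d_N(q,p) \leq C$ for a universal constant $C$. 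Theorem~\ref{tt2} then provides an extension across $p$ whose leaves are smooth through $p$ with bounded curvature; stability of the extended leaves follows because compactly supported test functions on the extended surface can be approximated in the $W^{1,2}$-norm by test functions vanishing near $p$ via logarithmic cutoffs, so positivity of the Jacobi operator passes to the extension. Performing these extensions simultaneously at each isolated point of $W^{(\alpha)}$ (they are carried out in disjoint balls, hence commute) yields $\overline{\Delta}_{\alpha+1}$. At limit ordinals we set $\overline{\Delta}_\alpha = \bigcup_{\beta<\alpha} \overline{\Delta}_\beta$, which is a well-defined minimal lamination of $N - W^{(\alpha)} = \bigcup_{\beta<\alpha}(N - W^{(\beta)})$ because each point has a neighborhood inside some $N - W^{(\beta)}$. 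Since $W^{(\alpha_0)} = \emptyset$, the lamination $\overline{\Delta}_{\alpha_0}$ is the desired extension to $N$.

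Items 1 and 2 follow by applying the main assertion to the collections $\mathrm{Stab}(\mathcal{L})$ and $\mathrm{Lim}(\mathcal{L})$ respectively, using that $\mathrm{Lim}(\mathcal{L}) \subset \mathrm{Stab}(\mathcal{L})$. For item 3, the key observation is that in a foliation every leaf is a limit leaf, since the transversal set $C_\beta$ of Definition~\ref{deflamination} equals the full interval $(0,1)$, so $\mathrm{Lim}(\mathcal{L}) = \mathcal{L}$; item 2 then extends $\mathcal{L}$ itself to a lamination of $N$, and this extension remains a foliation because it already covers $N - W$ and the local product structure supplied by Theorem~\ref{tt2} covers each $p \in W$ by a leaf. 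The main obstacle will be the successor step: verifying that stability is genuinely preserved under the single-point extensions (the capacity-zero argument for Jacobi operators) and that the simultaneous local extensions glue into a bona fide lamination of $N - W^{(\alpha+1)}$ rather than a pointwise smoothing.
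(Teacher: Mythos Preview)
Your argument is correct and hits the same key ingredients as the paper: curvature estimates for stable surfaces give the quadratic bound $|\sigma|\,d_N\le C$ needed to invoke Theorem~\ref{tt2} at each isolated point, and one then iterates over the countable closed set $W$.

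The packaging differs. The paper first localizes to small compact balls $B_N$ with $\partial B_N\cap W=\mbox{\O}$, so that $W\cap B_N$ is a \emph{complete} countable metric space, and then argues by contradiction: it lets $W''$ be the set of points across which the lamination fails to extend, observes $W''$ is closed, and applies Baire's theorem to $W''$ to produce an isolated point of $W''$---which is then removable, a contradiction. You instead run an explicit transfinite induction along the Cantor--Bendixson derived sequence of $W$. These are two standard and essentially interchangeable ways to ``peel off isolated points'' from a countable closed set; your version is a bit more explicit, while the paper's is shorter. One small point: your claim that $W^{(\alpha_0)}=\mbox{\O}$ needs the fact that a nonempty perfect subset of a (locally compact) metric space is uncountable, which is immediate once one localizes to a compact ball as the paper does---you might want to say this. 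On the other hand, your logarithmic-cutoff remark for why stability survives the single-point extension is a detail the paper's proof leaves implicit, so you have actually filled in a step the paper skips.
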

\begin{proof}
We start by proving the first statement in Corollary~\ref{corrs}.
Since the extension of the closure $\lc _1$ of any given collection
of stable leaves in ${\cal L}$ is a local question,
it suffices to prove the corollary in small, open extrinsic balls in $N$. As
$W$ is countable, we can take these balls so that each of their boundaries
are disjoint from $W$, and their closures in $N$ are compact. It follows
that for every such ball $B_N$, the set $W\cap B_N$ is a complete
countable metric space. By Baire's theorem, the set of isolated
points in $W\cap B_N$ is dense in $W\cap B_N$. Curvature
estimates for stable minimal surfaces~\cite{ros9,rst1,sc3}
together with Theorem~\ref{tt2} imply that ${\cal L}_1 \cap B_N$ extends across every isolated point
of $W \cap B_N$ to a minimal lamination of $B_N-W'$, where $W'$ is the subset of
non-isolated points in $W\cap B_N$. Consider the minimal closed subset $W''$ of $W\cap B_N$
(under inclusion) such that ${\cal L}_1$ does not extend across any
point $p\in W''$ to a minimal lamination. We want to prove that $W''=\mbox{\O }$. Arguing by
contradiction, suppose $W''\neq \mbox{\O }$. As before, $W''$ is a
countable complete metric space and so, Baire's theorem insures that
the set of its isolated points is dense in $W''$. But the lamination
${\cal L}_2$ obtained by extension of ${\cal L}_1$ across $W-W''$,
extends through every isolated point of $W''$ by the above
arguments; hence we contradict the minimality of $W''$ and the first
statement in Corollary~\ref{corrs} is proved.

Note that items~{\it 1, 2, 3} of this corollary follow directly from the already proven first
statement of the corollary and from the following facts:
\begin{itemize}
\item Limit leaves of a minimal lamination are stable~\cite{mpr19}.
\item A smooth limit of limit leaves of a lamination is also a limit leaf.
\item Every leaf of a foliation is a limit leaf.
\end{itemize}
Hence, the proof is complete.
\end{proof}
\par
\vspace{.5cm}

As an application of Corollary~\ref{corrs}, we have the following
generalization of the stability lemma (Lemma~\ref{lema1}) in the minimal case. The proof
of the next result follows immediately from the fact that every
embedded, stable minimal surface in a Riemannian three-manifold has
local curvature estimates, and so, its closure has the structure of
a minimal lamination all whose leaves are stable.

\begin{corollary}
\label{corolcountsing}
Let $M$ be a connected, embedded, stable minimal surface
in a Riemannian three-manifold $N$. Suppose that
$M$ is complete outside a countable closed set
${\cal S}$ of $N$.
Then, the closure of $M$ has the structure of a
minimal lamination of $N$, and the intrinsic metric
completion of $M$ is a leaf of this lamination.
In particular, if $N=\R^3$, then the closure of $M$ is a plane.
\end{corollary}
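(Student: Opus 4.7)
My plan is to reduce the corollary to Corollary~\ref{corrs} together with classical curvature estimates for stable minimal surfaces and, in the Euclidean case, to the classical Fischer-Colbrie--Schoen/do Carmo--Peng theorem that a complete stable minimal surface in $\R^3$ is a plane.

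Step 1 (lamination structure on $N-\mathcal{S}$). Since $M$ is complete outside $\mathcal{S}$, the surface $M$ has no boundary inside $N-\mathcal{S}$. By curvature estimates for embedded stable minimal surfaces away from their boundaries (Schoen~\cite{sc3}, and~\cite{rst1,mpr19} in the one-sided case), $|\sigma_M|$ is uniformly bounded on every compact subset of $N-\mathcal{S}$. The standard compactness result for sequences of minimal surfaces with locally bounded second fundamental form (Proposition~B1 of~\cite{cm23}, recalled in Section~\ref{secex}) then shows that the closure of $M$ in $N-\mathcal{S}$ has the structure of a minimal lamination $\mathcal{L}$ of $N-\mathcal{S}$. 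Every leaf of $\mathcal{L}$ distinct from $M$ is a limit leaf of $\mathcal{L}$ (by construction, since $\mathcal{L}$ is the closure of the single leaf $M$), so its two-sided cover is stable by~\cite{mpr18}; together with the hypothesized stability of $M$, this says every leaf of $\mathcal{L}$ is stable.

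Step 2 (extension across $\mathcal{S}$ and identification of the completion). Applying Corollary~\ref{corrs} to the stable lamination $\mathcal{L}$ of $N-\mathcal{S}$, its closure extends to a minimal lamination $\widehat{\mathcal{L}}$ of $N$ consisting of stable leaves. Let $\widehat{M}$ denote the leaf of $\widehat{\mathcal{L}}$ containing $M$. I claim $\widehat{M}$ is exactly the intrinsic metric completion of $M$: every intrinsically divergent path in $M$ of finite length has, by hypothesis, a limit point $p\in\mathcal{S}$; near $p$, Corollary~\ref{corrs} provides a smooth lamination chart in which a neighborhood of $p$ in $\widehat{M}$ is an embedded smooth minimal surface whose complement of $\{p\}$ locally coincides with $M$. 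This matches the local completion of $M$ at $p$ to a smooth point on a unique smooth leaf through $p$, so $\widehat{M}$ and the intrinsic completion of $M$ agree as Riemannian surfaces.

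Step 3 (the case $N=\R^3$). In this case each leaf of $\widehat{\mathcal{L}}$ is a complete, embedded, stable minimal surface in $\R^3$, and hence a plane by the classical theorem of Fischer-Colbrie--Schoen~\cite{fs1} (alternatively, this is the $\mathcal{S}=\emptyset$ specialization of Lemma~\ref{lema1}). In particular, the leaf $\widehat{M}$ is a plane $\Pi\supset M$. Since $M$ is an open subset of $\Pi$ whose complement in $\Pi$ is contained in the countable set $\mathcal{S}$, it is dense in $\Pi$, and therefore the closure of $M$ in $\R^3$ is $\Pi$.

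The only nontrivial point is the identification in Step~2 of the intrinsic completion with the leaf $\widehat{M}$; this amounts to checking that the smooth extension guaranteed by Corollary~\ref{corrs} at each point of $\mathcal{S}\cap\overline{M}$ is unbranched and genuinely caps off an intrinsic end of $M$ of finite length, rather than representing, for example, a separate limit leaf tangent to $M$ at the singular point. This follows because $\widehat{\mathcal{L}}$ is an embedded lamination (so leaves do not intersect) and Theorem~\ref{tt2} excludes any branching in the smooth extension across an isolated point of the singular set.
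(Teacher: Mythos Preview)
Your proof is correct and follows the same approach as the paper, which gives only a one-sentence sketch (``follows immediately from the fact that every embedded, stable minimal surface in a Riemannian three-manifold has local curvature estimates, and so, its closure has the structure of a minimal lamination all whose leaves are stable''); you have simply made explicit the invocation of Corollary~\ref{corrs} and the identification of the intrinsic completion with the leaf $\widehat{M}$. One minor remark: in your final paragraph the appeal to Theorem~\ref{tt2} for ``an isolated point of the singular set'' is slightly misplaced, since the points of $\mathcal{S}$ need not be isolated; but this is harmless, because the smooth unbranched lamination structure across all of $\mathcal{S}$ is already delivered by Corollary~\ref{corrs} (whose proof handles the non-isolated points via Baire's theorem), so your Step~2 stands as written without that last paragraph.
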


\section{Appendix: An alternative proof of Proposition~\ref{propos1}.}
\label{appendix}

The main technical result of this paper, Proposition~\ref{propos1}, was proved
as a consequence of Lemmas~\ref{lema2new} and~\ref{lema1new}, together with an
analysis of type I and type II curves. In this analysis we used flux arguments,
one of which was based in Colding-Minicozzi theory. We next explain how to rule out
both cases with a different argument, based on conformal properties
for minimal surfaces. The arguments that follow will use the same notation and results
in Section~\ref{sec4}, that will be assumed to hold up to Section~\ref{subsect1}.

Consider the metric $\widehat{g}=\frac{1}{R^2}\langle ,\rangle $ on
$\R^3-\{ \vec{0}\} $. 
 As we noticed in the proof of Lemma~\ref{lema1}, $(\R^3-\{ \vec{0}\}
,\widehat{g})$ is isometric to $\esf^2(1)\times \R $ endowed with its
product metric $\wt{g}$. In fact, the map
\begin{equation}
\label{mapF}
F\colon (\R^3-\{ \vec{0}\} ,\widehat{g}) \to (\esf^2(1)\times \R ,\wt{g}), \quad
F(p)=\left( \frac{p}{R(p)},\log R(p)\right) ,
\end{equation}
is an isometry. If $C$ is any
non-negative vertical half-cone $C$ minus its vertex at $\vec{0}$,
then $A_C:=F(C)$ is the flat cylinder $(C\cap  \esf ^2(1))\times \R $.
In the particular case where $C$ is
the $(x_1,x_2)$-plane, then $A_C=\esf ^1(1)\times \R$
is totally geodesic. We endow $\esf
^2(1)\times \R $ with global coordinates $(\varphi ,\theta ,t)$ so
that $(\varphi ,\theta )$ are the natural spherical coordinates on
$\esf ^2(1)$ and $t$ denotes the vertical linear coordinate in $\esf
^2(1)\times \R $ (we will consider $t$ to be the vertical height in
$\esf^2(1)\times \R $); recall that $\varphi \in [0,\pi ]$ measures
the angle with the positive vertical axis in $\R^3$ and $\theta \in
[0,2\pi )$.
Let $W=H^+ -C_{\de} ^-\subset \R^3-\{ \vec{0}\} $ be the closure in $H^+$
of the convex region above $C_{\de }$, and let $\wt{W}=F(W)$ be the
closed solid cylinder bounded by $A_{C_{\de }}$;
 see Figure~\ref{figure2new}.
\begin{figure}
\begin{center}
\includegraphics[width=17.3cm,height=5cm]{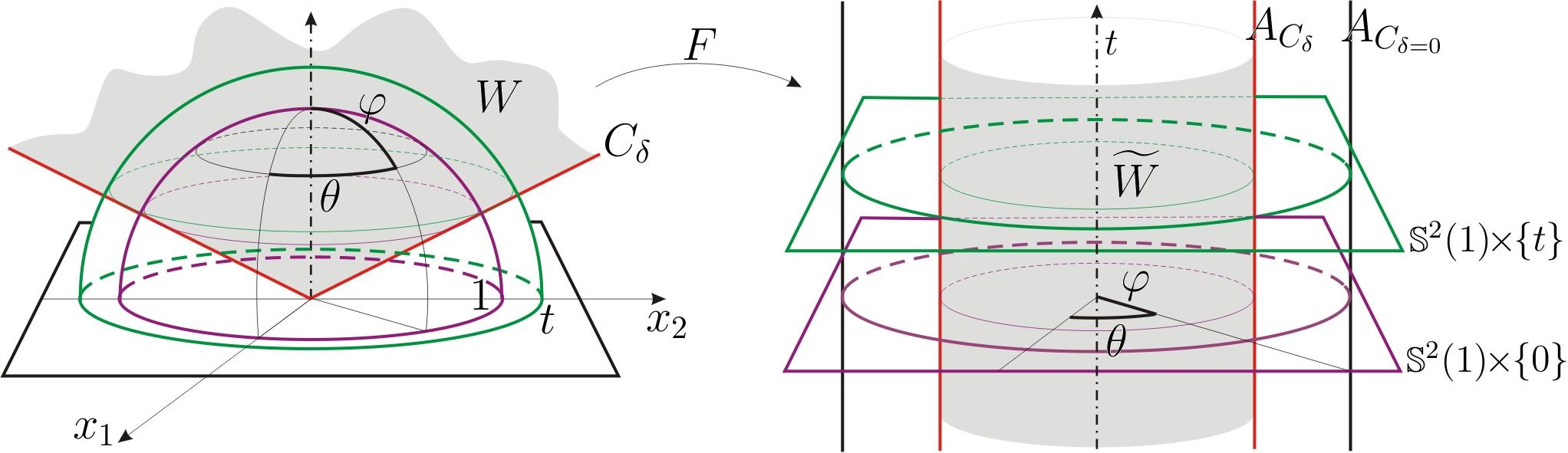}
\caption{$F(p)=(\frac{p}{R(p)}, \log R(p))$ is an isometry between
$(\R^3-\{ \vec{0}\} ,\widehat{g})$ and $(\esf^2(1)\times \R ,\wt{g})$.}
\label{figure2new}
\end{center}
\end{figure}

Consider a leaf $L$ of a minimal lamination $\mathcal{L}$ of $\R^3-\{ \vec{0}\} $
satisfying the
hypotheses and conclusions of Lemmas~\ref{lema2new} and~\ref{lema1new}.
Therefore, $\widehat{L}=(L,\widehat{g}|_L)\subset
(\R^3-\{ \vec{0}\} ,\widehat{g})$ is
complete.  Given a surface $\Sigma \subset \R^3-\{ \vec{0}\} $, we will
denote by $\widetilde{\Sigma }$ the image $F(\Sigma )$ under $F$.

\begin{lemma}
  \label{lema3new}
In the above situation, the $\widetilde{g}$-area of $(\widetilde{L}\cap \wt{W})\cap \{ T<t<T+1\} $ is
bounded independently of $T\in \R $. In particular, there exists $c>0$ such that
for any $a>1$, the $\widetilde{g}$-area of $(\widetilde{L}\cap \widetilde{W})\cap \{ T<t<T+a\} $
is not greater than $c\, a$.
\end{lemma}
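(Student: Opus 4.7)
The plan is to reduce the problem by scaling to bounding the $g$-area of a rescaling of $L$ in a fixed compact region, and then to establish this uniform bound via a blow-up and compactness argument.

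First I would exploit the isometry $F$ together with the scale invariance of $\widehat{g}$: vertical translation by $-T$ in $(\esf^2\times \R,\widetilde{g})$ corresponds under $F^{-1}$ to the homothety $p\mapsto e^{-T}p$ of $\R^3-\{\vec{0}\}$. Setting $s=e^T$ and $L_s=\frac{1}{s}L$, this yields
\[
\mathrm{Area}_{\widetilde{g}}\bigl((\widetilde{L}\cap \widetilde{W})\cap \{T<t<T+1\}\bigr)=\mathrm{Area}_{\widehat{g}}\bigl(L_s\cap W\cap \{1<R<e\}\bigr),
\]
and since $R\in[1,e]$ on this region, the right-hand side is comparable to $\mathrm{Area}_g(L_s\cap W\cap \{1<R<e\})$. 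It therefore suffices to bound $\mathrm{Area}_g(L_s\cap W\cap \{1<R<e\})$ uniformly in $s>0$.

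Each $L_s$ is a leaf of the minimal lamination $\frac{1}{s}\mathcal{L}$ which inherits the same quadratic-decay constant $C$, so $|\sigma_{L_s}|^2\le 2C$ on $\{R\ge 1\}$ uniformly in $s$. Moreover, each $L_s$ is contained in $H^+$, is proper there with $\lim(L_s)=\{x_3=0\}-\{\vec{0}\}$, and satisfies the slope estimate of Lemma~\ref{lema1new} in $C_\delta^-$ with the same constants. I would argue by contradiction: assume $\mathrm{Area}_g(L_{s_n}\cap W\cap \{1<R<e\})\to\infty$ along some sequence $s_n$. By the compactness theorem for minimal laminations with uniformly bounded second fundamental form (Proposition~B.1 of~\cite{cm23}), a subsequence of $\frac{1}{s_n}\mathcal{L}$ converges on compact subsets of $\R^3-\{\vec{0}\}$ to a minimal lamination $\mathcal{L}_\infty$ with quadratic curvature decay constant~$C$, all of whose leaves lie in $\{x_3\ge 0\}$. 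The area blow-up in the fixed compact region $W\cap\{1<R<e\}$, where the convergence is smooth, forces the multiplicity of the convergence to be unbounded at some leaf $L_\infty$ of $\mathcal{L}_\infty$ meeting this region; hence $L_\infty$ is a limit leaf of $\mathcal{L}_\infty$. By Corollary~\ref{sc1}, its closure $\overline{L_\infty}$ is a plane, necessarily horizontal of the form $\{x_3=h\}$ for some $h\ge 0$.

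The main obstacle will be to derive a contradiction from the existence of this horizontal-plane limit leaf. I expect to combine the Half-space Theorem~\cite{hm10} with the slope estimate of Lemma~\ref{lema1new}: the unbounded accumulation of sheets of $L_{s_n}$ on $\{x_3=h\}$ in $W\cap\{1<R<e\}$ propagates, via the slope control in $C_\delta^-$, to a many-sheeted graphical accumulation of $L_{s_n}$ on the same plane in the region $C_\delta^-$. Since each $L_{s_n}$ is a single connected, properly embedded surface in $H^+$, these disjoint graphical sheets must be connected by pieces of $L_{s_n}$ lying in $W$; a separation argument using the plane barrier $\{x_3=h\}$ together with the properness of $L_{s_n}$ in $H^+$ rules out such a configuration. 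Finally, the second statement of the lemma (the linear bound $c\,a$ for $a>1$) is immediate from the uniform bound on unit slabs by decomposing $(T,T+a)$ into $\lceil a\rceil$ sub-intervals of length at most one and applying the first statement to each.
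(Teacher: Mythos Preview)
Your reduction via the isometry $F$ and the rescaling $L_s=\frac{1}{s}L$ is exactly what the paper does (the paper carries out the computation at the end rather than the beginning, but it is the same identity). The contradiction setup---area blow-up in the fixed compact region $W\cap\{1<R<e\}$, compactness from the uniform quadratic curvature bound, and the resulting horizontal plane $L_\infty=\{x_3=h\}$ in the limit---also matches. Two small points: since the area blow-up occurs inside $W\cap\{1<R<e\}$, which lies at height at least $\delta/\sqrt{1+\delta^2}>0$, you automatically have $h>0$, not just $h\ge 0$; and unbounded multiplicity does not by itself make $L_\infty$ a \emph{limit leaf} of $\mathcal{L}_\infty$, so you should invoke Corollary~\ref{sc2} (multiplicity $>1$) alongside Corollary~\ref{sc1}, as the paper does.

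The real gap is your final step. The sentence ``a separation argument using the plane barrier $\{x_3=h\}$ together with the properness of $L_{s_n}$ in $H^+$ rules out such a configuration'' is not a proof, and in fact the configuration you describe---a single connected, properly embedded minimal surface with many nearly-planar graphical sheets accumulating on a plane, all joined elsewhere---is \emph{not} a priori impossible (compare Example~IV in Section~\ref{secex}). The Half-space Theorem does not apply directly either, since $L_{s_n}$ is not a complete surface contained in a slab around $\{x_3=h\}$. The paper's route to the contradiction is more structural: the horizontal plane $L_1$ at positive height forces the curves in $\Lambda$ to be of type~I (a spiral cannot limit to the closed circle $L_1\cap C_\delta$), so by Assertion~\ref{ass4.5} each component of $L\cap C_\delta^-$ is an annular graph; this in turn forces the lamination $\mathcal{L}$ to contain an entire horizontal graph at positive height, and one then applies the argument in the proof of Lemma~\ref{lema2new} (a plane at positive distance from $\vec{0}$ cannot lie in $\lim(L)$, by the bounded-curvature/Half-space reasoning of \cite{mr8}) to reach a contradiction. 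You should replace your vague separation argument with this structural one.
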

\begin{proof}
  Suppose first that there exists a sequence $r_n>0$ such that the
area of $L\cap W\cap \{ r_n<R<e\cdot r_n\} $ is greater than any
constant times $r_n^2$ for $n$ large enough. Then the areas of the
surfaces $\frac{1}{r_n}\left( L\cap W\cap \{ r_n<R<e\cdot r_n\}
\right)$ become unbounded in a compact region of $H^+$. As $L$ is assumed
to have quadratic curvature decay, then it
follows that a subsequence of $\frac{1}{r_n}L$ converges to a
lamination of $\R^3-\{ \vec{0}\} $, with a leaf $L_1$ which is
either a limit leaf or a leaf for which the multiplicity of the convergence
of the sequence $\{ \frac{1}{r_n}L\} _n$ to $L_1$ is infinite.
In either of these two cases, $L_1\subset H^+$ which must be a horizontal plane;
see Corollaries~\ref{sc1} and \ref{sc2}. The existence of such
a plane implies that the curves of the set $\Lambda $ (defined in the
paragraph just before Section~\ref{subsect1}) are of type I, and that
below $C_\de$, $L$ consists of annular
graphs. This property implies that the lamination $\mathcal{L}$
of which $L$ is a leaf contains
an entire graph, which must be a horizontal plane of positive height. Applying
the arguments in the proof of Lemma~\ref{lema2new} we can rule out the
existence of such a plane, which then proves that the area of
$L\cap W\cap \{ r<R<e\cdot r\} $ divided by $r^2$ is bounded
from above independently of $r>0$.

The last property implies that there exists some constant $c_1>0$ such that
for any $\l >0$,
\[
r^{-2}\mbox{Area}\left[ \l (L\cap W)\cap \{ r<R<e\cdot r\} \right]=
(r/\l )^{-2}\mbox{Area}\left[ (L\cap W)\cap \{ \frac{r}{\l }<R<e\cdot \frac{r}{\l }\} \right]
\leq c_1,
\]
Since vertical translations $\psi _T(q,t)=(q,t+T)$ in $\esf^2(1)\times \R $ are isometries of the
product metric $\wt{g}$ on $\esf ^2(1)\times \R $,
then homotheties centered at the origin are isometries of $(\R^3-\{ 0\} ,\widehat{g})$. Thus,
\[
\mbox{Area}_{\wt{g}}\left[ (\widetilde{L}\cap \wt{W})\cap \{ T<t<T+1\} \right] =
\mbox{Area}_{\wt{g}}\left[ \psi _{-T}(\widetilde{L}\cap \wt{W})\cap \{ 0<t<1\} \right]
\]
\[
=\mbox{Area}_{\widehat{g}}\left[ \left( (e^{-T}L)\cap W\right) \cap \{ 1<R<e\} \right]
\stackrel{(A)}{\leq }\mbox{Area}_{g}\left[ (e^{-T}L)\cap W \cap \{ 1<R<e\} \right] \leq
c_1,
\]
where in (A) we have used that $\widehat{g}=\frac{1}{R^2}g$ and $R\geq 1$ in the range where we are computing areas.
This finishes the proof of the lemma.
\end{proof}

\begin{proposition}
  \label{propos1new}
The complete surface $\left( \wt{L}=F(L),\widetilde{g}|_{\wt{L}}\right) $
has at most quadratic area growth.
\end{proposition}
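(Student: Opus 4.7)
My plan is to reduce the claim of at most quadratic area growth to a uniform bound on the $\tilde{g}$-area of horizontal slabs of $\tilde L$, and then to establish that slab bound by combining Lemma~\ref{lema3new} with a scale-invariant area estimate in the cone region $C_{\delta}^{-}$. First I would note that on $(\esf^{2}(1)\times\R, \tilde{g})$ the coordinate $t$ satisfies $|\nabla t|_{\tilde g} = 1$, so its restriction to $\tilde L$ is $1$-Lipschitz; therefore every intrinsic ball $B_{\tilde L}(p,r)$ is contained in the slab $\{\,|t - t(p)| \leq r\,\}$. If I can prove the slab bound
\[
\text{Area}_{\tilde g}\bigl(\tilde L \cap \{T < t < T+1\}\bigr) \leq c_{0}\qquad \text{for all }T\in\R,
\]
then $\text{Area}_{\tilde g}(B_{\tilde L}(p,r)) \leq c_{0}(2r+1)$, which is linear in $r$ for $r\geq 1$. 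For $r<1$, the uniform bound on the Gaussian curvature of $\tilde L$ in $\tilde g$ (a consequence of $|K_L| R^{2} \leq C$ via the formula $\tilde K_{\tilde L} = R^{2} K_L + 2(1-|\nabla R|^{2})$ from the proof of Lemma~\ref{lema1}) gives the $O(r^{2})$ estimate directly.

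Next I would split $\tilde L \cap \{T < t < T+1\}$ into $\tilde L \cap \tilde W \cap \{T < t < T+1\}$ and its complement. Lemma~\ref{lema3new} bounds the first piece; pulling the second piece back to the original scale via $F$ reduces the remaining task to the scale-invariant $g$-area estimate
\[
\text{Area}_{g}\bigl(L \cap C_{\delta}^{-}\cap\{r < R < er\}\bigr) \leq c_{1}\, r^{2}\qquad \text{for all }r>0.
\]
By Lemma~\ref{lema1new}, each component of $L \cap C_{\delta}^{-}$ is a nearly-horizontal graph over its projection to $\{x_{3}=0\}$. In the type~II case $L \cap C_{\delta}^{-}$ has only $k$ spiral components and the spiral pattern is scale invariant, so each component contributes a bounded amount of $g$-area to every dyadic extrinsic annulus, and the estimate holds with $c_{1}=k\cdot(\text{const})$. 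In the type~I case $L \cap C_{\delta}^{-}$ consists of infinitely many disjoint graphs $G_{\Gamma}$, each of which contributes $O(r^{2})$ to the $g$-area of $\{r<R<er\}$, so the estimate reduces to bounding the number of $\Gamma \in \Lambda$ with $|\Gamma|\in(r,er)$ uniformly in $r>0$.

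The main obstacle is this sheet-counting bound in the type~I case, because $\tilde L$ is not proper and one must rule out accumulation of infinitely many graphs into a single dyadic annulus as $r\to 0$. I would handle it by a rescaling and compactness argument modeled on Lemma~\ref{lema3new}: if the sheet count were unbounded along a sequence $r_{n}\to 0$, a subsequence of the dilations $\tfrac{1}{r_{n}}L$ would accumulate to the punctured plane $\{x_{3}=0\}-\{\vec 0\}$ with infinite multiplicity in $C_{\delta}^{-}$, and selecting a point of non-vanishing rescaled curvature in $\esf^{2}(1)$ as in Assertion~\ref{ass4.6} would produce an additional non-flat leaf $L_{1}$ of the limit lamination; by Assertion~\ref{ass4.7} the closure of $L_{1}$ would force the original $\mathcal L$ to contain a horizontal plane of positive height, contradicting the proof of Lemma~\ref{lema2new}. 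Once the scale-invariant estimate and hence the slab bound are in place, the reduction in the first paragraph delivers the at most quadratic area growth of $(\tilde L,\tilde g|_{\tilde L})$.
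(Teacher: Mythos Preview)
Your central slab estimate
\[
\text{Area}_{\tilde g}\bigl(\tilde L \cap \{T < t < T+1\}\bigr) \leq c_{0}
\]
is false, and with it the whole reduction collapses. The point is that $L$ is \emph{not} proper in $\R^3-\{\vec 0\}$: by Lemma~\ref{lema2new} its limit set is the punctured plane $\{x_3=0\}-\{\vec 0\}$, so $\tilde L$ accumulates on the totally geodesic cylinder $\esf^1(1)\times\R$. Every slab $\{T<t<T+1\}$ contains a piece of this cylinder, hence infinitely many nearly-horizontal sheets of $\tilde L$ outside $\tilde W$, and the $\tilde g$-area there is infinite. Concretely:
\begin{itemize}
\item In type~I, each annular end $E(\Gamma)$ is an unbounded graph going to $R=\infty$, so $E(\Gamma)$ meets the shell $\{r<R<er\}$ whenever $|\Gamma|\lesssim er$, not merely when $|\Gamma|\in(r,er)$. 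Since the $\Gamma$'s accumulate at $\vec 0$, infinitely many $E(\Gamma)$ cross every fixed shell, and your sheet count is unbounded.
\item In type~II, each $E(\Gamma_i)$ is an $\infty$-valued multigraph over the universal cover of the punctured plane; over any point of the base annulus there are infinitely many sheets stacked down toward $x_3=0$, all of which lie in the shell $\{r<R<er\}$. So ``$k$ components'' does not translate into ``$k$ sheets''.
\end{itemize}
Your argument would produce \emph{linear} area growth, strictly sharper than the paper's quadratic conclusion; this discrepancy is itself a warning sign.

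The paper does not attempt a slab bound. Instead it builds, for each $r$, an explicit region $\Delta(r)\subset\tilde L$ with $B_{\tilde L}(p_0,r)\subset\Delta(r)$ that is much smaller than the full slab $\tilde L\cap\{|t|<r\}$: it takes $\tilde L\cap\tilde W_r$ (linear area by Lemma~\ref{lema3new}) and adjoins, for each of the at most $O(r)$ curves $\tilde\Gamma$ meeting $\partial\tilde W_r$ (type~I) or the $k$ spiral arcs (type~II), only a \emph{truncated} piece of the corresponding graphical end, cut off at height $2r$. Each such piece has area $O(r)$, so $\text{Area}_{\tilde g}(\Delta(r))=O(r^2)$. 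The real work is the containment $B_{\tilde L}(p_0,r)\subset\Delta(r)$, which uses that an intrinsic path of length $<r$ cannot escape $\Delta(r)$ through its boundary; this is where the specific shape of $\Delta(r)$ and the almost-horizontal geometry of $\tilde E(\Gamma)$ enter. That geometric containment argument is the missing idea in your proposal.
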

\begin{proof} We will divide the proof in two cases, according to the type of curves in $\Lambda $ (with
the notation just after Section~\ref{subsect1}).
\par
\vspace{.2cm}
{\sc Suppose that the curves in $\Lambda $ are of type~I.}
\par
\vspace{.2cm}
Given $\G \in \Lambda $, $\wt{\G }:=F(\G )$ is
an almost horizontal circle in $A_{C_\de}=F(C_{\de })=\partial \wt{W}$.
Let $\widetilde{\Lambda}$ denote the collection of these
curves $\widetilde{\G }$. Let $E(\G )$ be the component of $L\cap C_{\de }^-$ with
$\partial E(\G ) = \G$. Since each of the components in $L\cap
C_{\de '}$ is of the same type as $\G $ for any $\de '\in (0,\de )$,
then $E(\G )$ is an annulus. Let $\widetilde{E}(\G )=F(E(\G ))\subset \esf
^2(1)\times \R $ be the related surface. Note that
$\widetilde{E}(\G )$ is asymptotic to the end of $\esf^1(1)\times
[0,\infty )$ and that $\widetilde{E}(\G )$ is a small $\varphi $-graph
over its projection to $\esf ^1(1)\times \R $;
see Figure~\ref{figure3new}.
In fact, this $\varphi $-graph
has small $\wt{g}$-gradient as such a gradient measures the $\wt{g}$-angle
between the tangent space to $\wt{E}(\G)$ with the vertical cylinders in
$\esf^2(1)\times \R $, but this $\wt{g}$-angle coincides with the angle
between the tangent space to $E(\G)$ with the cones $C_{\de '}$, $\de '\in
(0,\de )$ (since the metric $\wh{g}$ is conformal to usual inner product in
$\R^3$), which can be made arbitrarily small by Lemma~\ref{lema1new}.
\begin{figure}
\begin{center}
\includegraphics[width=14.6cm,height=10cm]{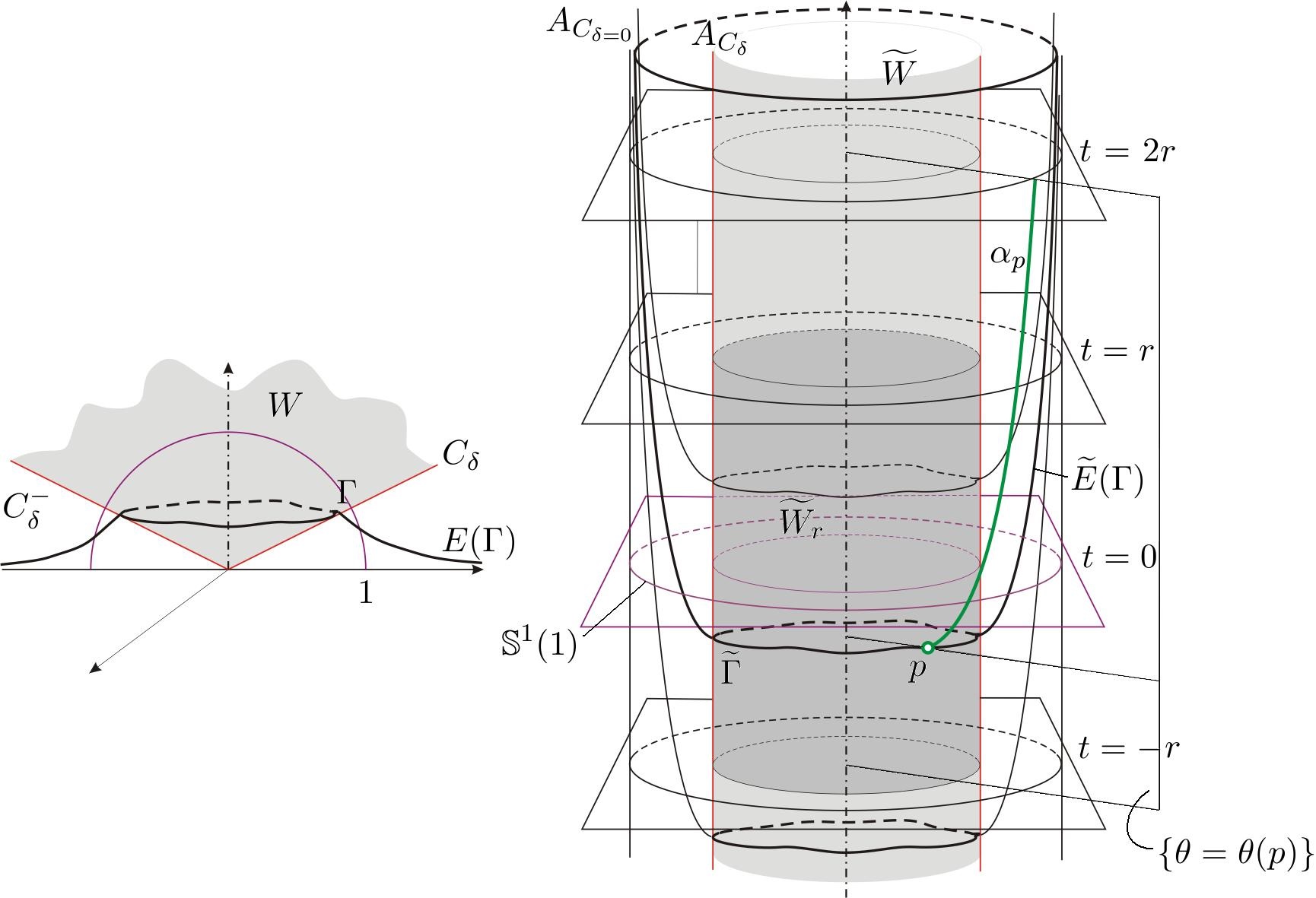}
\caption{Quadratic area growth in the case of curves in $\Lambda $ of type I.}
\label{figure3new}
\end{center}
\end{figure}

For $d>0$, let $[\widetilde{E}(\G )](d)$ be the subset of
$\widetilde{E}(\G )$ consisting of those points at intrinsic
distance at most $d$ from $\widetilde{\G} =
\partial \widetilde{E}(\G)$. Then for $\de
>0$ sufficiently small and $d\geq 1$, the $\widetilde{g}$-area of
$[\widetilde{E}(\G )](d)$ is less than $3\pi d$ and the limit as
$d \to \infty$ of such an area divided by $d$ is $2\pi $.

By Lemma~\ref{lema3new}, for $T_0>1$ fixed and for $T \in \R$, the
$\widetilde{g}$-area of $(\widetilde{L}\cap \wt{W})\cap \{ T-1<R<T+T_0+1\} $ is
bounded independently of $T$. Since the angle between $\partial
\wt{W}$ and $\widetilde{L} $ is small but bounded away from zero and the
components of $\widetilde{\Lambda}$
that intersect the region $\esf^2(1)\times [T, T+T_0] $ are contained in
$\esf^2(1)\times [T-\frac{1}{2}, T+T_0+\frac{1}{2}] $, then
the total $\widetilde{g}$-length of  components of $\widetilde{\Lambda}$
that intersect the region $\esf^2(1)\times [T, T+T_0] $ is bounded from above
independently of $T$ as is the number of  these components.

For $r>1$, let $\wt{W}_r=\wt{W}\cap (\esf  ^2(1)\times [-r,r])$ and let
$\widetilde{\Lambda}(r)$ denote the collection of those elements of
$\widetilde{\Lambda}$ which intersect $\partial \wt{W}_r$. For $r$
large, let $\Delta (r)\subset \widetilde{L}$ be union of
$\widetilde{L}\cap \wt{W}_r$ with the region
\[
V_r=\bigcup _{\widetilde{\G }\in \widetilde{\Lambda}(r)}\left(
\bigcup _{p\in  \widetilde{\G }} \a _p\right)
\]
where for each $p\in \widetilde{\G }$ with $\widetilde{\G } \in
\widetilde{\Lambda}(r) $,  (with related component $\G =F^{-1}(\wt{\G })\in \Lambda$),
$\a _p$ is the component of $\widetilde{E}(\G )\cap \{ \t =\t (p)\} $
whose end points are $p$ and a point at $\{ t=2r\} $, see Figure~\ref{figure3new} right.
Given $\widetilde{\G }\in \widetilde{\Lambda}(r)$,
the geometric meaning of $\bigcup _{p\in  \widetilde{\G }} \a _p$
is the compact annular piece of the
end $\widetilde{E}(\G )$ of $\widetilde{L}$ 
bounded by the almost horizontal circle $\widetilde{\G }$ 
and by a horizontal Jordan curve at constant height $2r$ in $\esf ^2(1)\times \R $
which is arbitrarily close to the circle $\esf ^1(1)\times \{ 2r\} $ if $r$ is taken large
enough; 
see Figure~\ref{figure3new} right. Note that $V_r$ is just
a finite union of these compact annuli, indexed by the curves
$\widetilde{\G }\in \widetilde{\Lambda}(r)$.
By the last paragraph,
there exists a universal constant $c>0$ such that for $r>1$, the
number of $\widetilde{\G} $-curves such that $\partial
\widetilde{E}(\G )$ intersects $\esf ^2(1)\times [-r,r]$, divided by
$r$, is less than $c$. Also, note that for $r$ large, the $\widetilde{g}$-length of any of these
$\widetilde{\G }$-curves is not greater than $3\pi $ and the $\widetilde{g}$-length of any of
the considered $\a _p$-curves is less than or equal to $4r$.
Therefore, the $\widetilde{g}$-area of $V_r$ is certainly less than
$3\pi \cdot 4r\cdot cr=12\pi cr^2$ for $r$
large. Since the $\widetilde{g}$-area of $\widetilde{L}\cap W_r$
grows linearly in $r$, then the $\widetilde{g}$-area of $\Delta (r)$ is at
most $13\pi cr^2$ for $r$ large.

Fix $p_0\in \widetilde{L}\cap W \cap (\esf ^2(1)\times \{ 0\} )$ and
let $B_{\widetilde{L}}(p_0,r)$ be the intrinsic open ball of radius
$r>0$ centered at $p_0$. We will show that
$B_{\widetilde{L}}(p_0,r)\subset \Delta (r)$. First note that
$B_{\widetilde{L}}(p_0,r)$ is contained in the region
$\esf^2(1)\times (-r,r)$. Let $p$ be a point in
$B_{\widetilde{L}}(p_0,r)$. If $p\in \wt{W}_r$, then by definition $p\in
\Delta (r)$. Suppose $p\notin \wt{W}_r$ and let $\g \subset \widetilde{L}$ be a
curve of $\widetilde{g}$-length less than $r$ joining $p$ with
$p_0$. Since $\partial \Delta (r)$ does not intersect the region
$\esf^2(1)\times (-r,r)$, then $\g$ does not intersect the boundary
of $\Delta (r)$.  As $p_0\in \Delta (r)$, we conclude $\g \subset \Delta (r)$. Hence,
$p\in \Delta (r)$. Finally, since $B_{\widetilde{L}}(p_0,r)\subset \Delta (r)$,
we deduce from the last paragraph that the intrinsic area growth of $\widetilde{L}$ is at
most quadratic. This completes the proof of Proposition~\ref{propos1new} provided that
the curves in $\Lambda $ are of type I.
 \par
\vspace{.2cm}
{\sc Next assume that the curves in $\Lambda $ are of type II.}
\par
\vspace{.2cm}
As in the previous case of type I curves, we consider the similar objects:
\begin{itemize}
  \item $\G \in \Lambda $, which is now a spiraling curve in $C_{\de }$ limiting down to
  $\vec{0}$ and rotating infinitely many times around $C_{\de }$. We have $k$ of these
curve components in $\Lambda =\{ \G _1,\ldots ,\G _k\} $.

\item $E(\G )$, the component of $L\cap C_{\de }^-$ with $\partial E(\G )=\G $. Note that
$E(\G )$ is topologically a half-plane.

\item $\widetilde{E}(\G )=F(E(\G ))$, which  is also topologically
a half-plane, and it is a small $\varphi $-multigraph with small gradient over the cylinder
$\esf ^1(1)\times \R $ (see the previous type I curve case for this argument).
Away from its boundary, $\widetilde{E}(\G )$
is asymptotic as a set to $\esf ^1(1)\times \R $.

\item $\wt{W}=F(W)$, an infinite open solid vertical cylinder in $\esf ^2(1)\times \R $
bounded by $A_{C_{\de }}=F(C_{\de })$. Given $r>0$, $\wt{W}_r=\wt{W}\cap \left( \esf^2(1)
\times [-r,r]\right) $ is the portion of $\wt{W}$ between heights $-r$ and $r$.

\item $\widetilde{\G }=\partial \widetilde{E}(\G )=F(\G )$, an infinite spiral in $A_{C_{\de }}$
whose height function is proper. We have $k$ of these helix-type curves, and $\widetilde{\Lambda }=
F(\Lambda )=\{ \widetilde{\G }_1,\ldots ,\widetilde{\G }_k\} $.
\end{itemize}

For $r>1$, define
$\widetilde{\Lambda}(r)$  to be the set of components in
$\bigcup _{i=1}^k\left[ \widetilde{\G}_i\cap W_r\right] $
which contain an end point at height
$r$ and the other end point at height $-r$. Since the curves in
$\widetilde{\Lambda}$ are very horizontal, for any $r>1$ the set
$\widetilde{\Lambda}(r)$ consists of $k$ spiraling arcs
$\widetilde{\G}_1 (r), \ldots, \widetilde{\G}_k (r)$.
Given $i\in \{ 1,\ldots ,k\} $ and a point $p$ in a spiraling arc
$\widetilde{\G }_i(2r)\in \widetilde{\Lambda }(2r)$, we call
$\a _p$ to the component of $\widetilde{E}(\G )\cap \{ \t =\t (p)\} $
 whose end points are $p$ and a point at $\{ t=2r\} $. Thus, $\a _p$ is
a planar Jordan arc and as we move the point $p$ along $\widetilde{\G }_i(2r)$
(with $i$ fixed), the arcs $\{ \a _p\ | \ p\in \widetilde{\G }_i(2r)\} $
define a foliation of the compact disk $\bigcup _{p\in \widetilde{\G }_i(2r)}\a _p
\subset \widetilde{E}(\G _i)$. Let
\begin{equation}
  \label{eq:Vr}
V_r=\bigcup _{i=1}^k\left( \bigcup _{p\in \widetilde{\G }_i(2r)}\a _p\right)
\end{equation}
be the union of these $k$ disks, and let $\Delta (r)\subset \widetilde{L}$ be union of
$\widetilde{L}\cap \wt{W}_{2r}$ with $V_r$; see Figure~\ref{Vr}.
\begin{figure}
\begin{center}
\includegraphics[width=15cm]{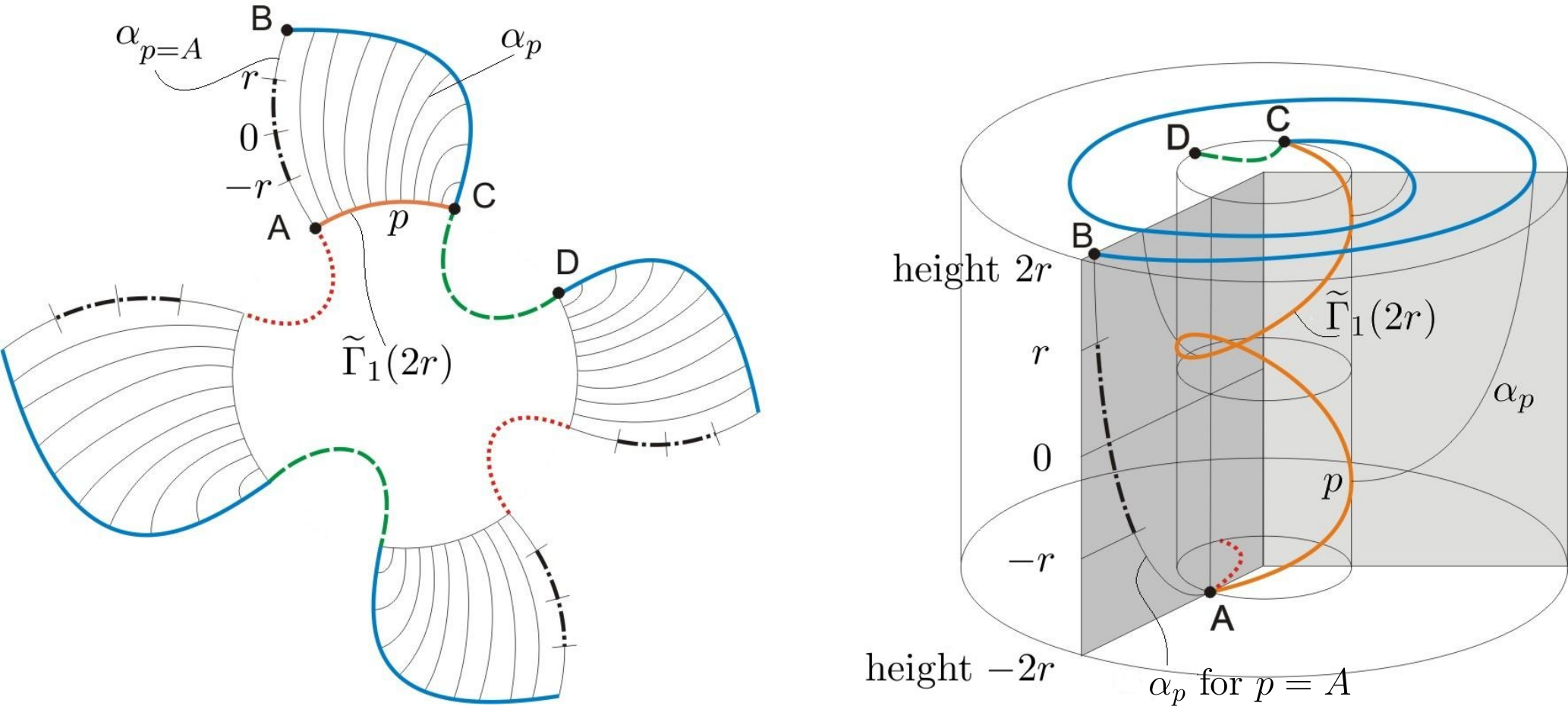}
\caption{Left: The cross-shaped region is an intrinsic representation of
$\Delta (r)$ if the curves in $\Lambda $ are of type II, here $k=4$. The
$k$ disks in the ``wings'' of $\Delta (r)$ form the set $V_r$, which is foliated by the Jordan arcs
$\a _p$. The remaining ``central'' portion $\widetilde{L}\cap \wt{W}_{2r}$ of $\Delta (r)$
might have positive genus. The boundary $\partial (\widetilde{L}\cap \wt{W}_{2r})$
of this central region is a cyclic union of spiraling arcs $\widetilde{\G }_i(2r)$
joined by arcs at constant
heights $2r$ (dashed in the figure) and $-2r$ (dotted in the figure).
Right: a schematic representation
of $\bigcup _{p\in  \widetilde{\G }_1(2r)} \a _p\subset V_r\subset \Delta (r)$
in $\esf^2(1)\times \R $ (the spiraling arc $\widetilde{\G }_1(2r)$
should be almost horizontal,
although we have represented
it reasonably steep in order to clarify the picture). The (blue) curve
joining $B$ to $C$ lies entirely at height $2r$ in $\esf ^2(1)\times \R $,
and spirals an arbitrarily large number of times (taking $r>0$ sufficiently large)
towards $\esf ^1(1)\times \{ r\} $ from its convex side.}
\label{Vr}
\end{center}
\end{figure}

The bound of $\wt{g}$-area density given by Lemma~\ref{lema3new} implies that
there exists a universal constant  $c>0$ such that for $r>1$ large,
the sum of the lengths of the curves in $\widetilde{\Lambda}(2r) $,
divided by $r$, is less than $c$. This last property together with the fact that
the $\widetilde{g}$-length of the $\a _p$-curves is not greater than $5r$ for
$\de >0$ sufficiently small, implies
that the $\widetilde{g}$-area of $V_r$ is less than or equal to $cr\cdot 5r=5cr^2$.
Since the $\widetilde{g}$-area of
$\widetilde{L}\cap \wt{W}_{2r}$ grows linearly in $r$, then we conclude that the
$\widetilde{g}$-area of $\Delta (r)$ is at most $6cr^2$ for $r$ large.

Let $p_0 \in \widetilde{L} \cap \wt{W}$ be a point at height $0$. It
remains to check that $B_{\widetilde{L}}(p_0,r)\subset \Delta (r)$ in
order to conclude that the intrinsic $\wt{g}$-area growth of $\widetilde{L}$
is at most quadratic in case of type II curves in $\Lambda $.
Observe that the boundary of $\Delta (r)$
intersects the region $\mbox{Ext}(\wt{W}_r)$ defined as the intersection of
the exterior of $\wt{W}$ with $\esf^2(1)\times [-r,r]$
along a finite collection $\Sigma=
\{\a_1, \ldots, \a_k \}$ of almost vertical arcs $\a_i\subset
 \a_{p_i}$, where $p_i$ is the extremum of $\widetilde{ \G} _i(2r)$
at height $t=-2r$ (these $\a _i$-curves are represented by dotted-dashed lines
$- \cdot - \cdot  - $ in Figure~\ref{Vr}).

By the previous argument for type~I curves,
in order to prove $B_{\widetilde{L}}(p_0,r)\subset \Delta (r)$ it
suffices to show that if $\g \subset \wt{L}$ is an arc
of $\widetilde{g}$-length less than $r$ starting at $p_0$, then
$\g \cap \partial \Delta (r)=\mbox{\O }$.
Observe that $\g $ cannot intersect the portion of
$\partial \Delta (r)$ at height $2r$ or $-2r$ (since
$p_0$ is at height zero and the $\wt{g}$-length of $\g $ is
less than $r$), so it suffices to discard any intersection of
$\g $ with a portion of $\partial \Delta (r)$ in $V_r$. So assume that the arc
$\g $ contains a point $q\in V_r\cap \partial \Delta (r)$. As
$\g $ has $\wt{g}$-length less than $r$ and starts at height zero,
then $\g $ lies entirely in $\esf^2(1)\times [-r,r]$. This implies
that $q$ must lie in one of the almost vertical arcs $\a _i$, $i=1,\ldots ,k$,
and by continuity $\g $ must intersect $\wt{\G }_i\cap (\esf^2(1)\times
[-r,r])$ at some point $q'$
so that the subarc $\g _{q,q'}$ of $\g $ between $q$ and $q'$ lies entirely in
the topological half plane $\widetilde{E}(\G _i)$.
Note that we can assign to the spiraling arc $\wt{\G }_i$ a well-defined,
real angle valued function $\widehat{\theta }$
which coincides with $\t $ mod $2\pi$. Furthermore, $\wt{\theta }$
extends to a continuous function (denoted in the same way) on $\widetilde{E}(\G)$.
Since the curve $\wt{\G }_i$ is almost horizontal and
$\widehat{\theta }\circ \g _{q,q'}$ is continuous, the absolute
difference between the $\widehat{\theta}$-values of the end points
of $\gamma _{q,q'}$ is much larger than $r$. This implies that the
$\wt{g}$-length of $\g _{q,q'}$ is also larger than $r$, which is
a contradiction. This contradiction demonstrates that
$B_{\widetilde{L}}(p_0,r)\subset \Delta (r)$, and thus,
the intrinsic area growth of $\widetilde{L}$ is at
most quadratic provided that the curves in $\Lambda $ are of type II.
Now the proof of Proposition~\ref{propos1new} is complete.
\end{proof}

We are now ready to reprove Proposition~\ref{propos1}.
Arguing by contradiction, suppose $L$ is a leaf of a minimal lamination
${\cal L}$ of $\R^3-\{ \vec{0}\} $ with quadratic decay of curvature,
and suppose that $L$ is not proper in $\R^3-\{ \vec{0}\} $.
By Lemmas~\ref{lema2new} and~\ref{lema1new},
after a rotation we can assume $L\subset H^+$, lim$(L)=\{ x_3=0\} -\{ \vec{0}\} $.
Consider the conformal change of metric
$\widehat{g}=\frac{1}{R^2}\langle ,\rangle $ on $\R^3-\{ \vec{0}\} $. Since the map $F$
defined in (\ref{mapF}) is an isometry, Proposition~\ref{propos1new} ensures
that the complete surface $\left( L,\widehat{g}|_{L}\right) $
has at most quadratic area growth. This property implies
recurrence for Brownian motion, see for instance
Grigor'yan~\cite{gri1}. Recurrence is a property that only depends on the conformal
class of $\widehat{g}|_L$, hence $L$ (with its original metric induced by the usual inner product
of $\R^3$) is also recurrent. The classical Liouville theorem for recurrent manifolds applies to
the positive harmonic coordinate function $x_3$ on $L$ and gives that $x_3$ is constant on $L$,
which is clearly a contradiction. This contradiction shows
that every leaf $L$ of $\mathcal{L}$ is properly embedded in $\R^3-
\{ \vec{0}\} $. The rest of the proof of the proposition is the same as the one
given at the end of Section~\ref{sec4}.

\center{William H. Meeks, III at profmeeks@gmail.com\\
Mathematics Department, University of Massachusetts, Amherst, MA 01003}
\center{Joaqu\'\i n P\'{e}rez at jperez@ugr.es \qquad\qquad Antonio Ros at aros@ugr.es\\
Department of Geometry and Topology, University of Granada, Granada, Spain}

\bibliographystyle{plain}
\bibliography{bill}

\begin{thebibliography}{10}

\bibitem{an4}
M.~Anderson.
\newblock Complete minimal hypersurfaces in hyperbolic n-manifolds.
\newblock {\em Comment. Math. Helv.}, 58:264--290, 1983.
\newblock MR0705537, Zbl 0549.53058.

\bibitem{chya1}
S.~Y. Cheng and S.~T. Yau.
\newblock Differential equations on riemanninan manifolds and their geometric
  applications.
\newblock {\em Comm. Pure Appl. Math.}, 28:333--354, 1975.
\newblock MR0385749 (52 \# 6608), Zbl 0312.53031.

\bibitem{cs1}
H.~I. Choi and R.~Schoen.
\newblock The space of minimal embeddings of a surface into a three-dimensional
  manifold of positive {R}icci curvature.
\newblock {\em Invent. Math.}, 81:387--394, 1985.
\newblock MR0807063, Zbl 0577.53044.

\bibitem{cm25}
T.~H. Colding and W.~P. Minicozzi~II.
\newblock The space of embedded minimal surfaces of fixed genus in a
  $3$-manifold {V}; {F}ixed genus.
\newblock Preprint math.DG/0509647v2 (2012).

\bibitem{cm26}
T.~H. Colding and W.~P. Minicozzi~II.
\newblock Multivalued minimal graphs and properness of disks.
\newblock {\em International Mathematical Research Notices}, 21:1111--1127,
  2002.
\newblock MR1904463, Zbl 1008.58012.

\bibitem{cm28}
T.~H. Colding and W.~P. Minicozzi~II.
\newblock Embedded minimal disks: proper versus nonproper - global versus
  local.
\newblock {\em Transactions of the AMS}, 356(1):283--289, 2003.
\newblock MR2020033, Zbl 1046.53001.

\bibitem{cm24}
T.~H. Colding and W.~P. Minicozzi~II.
\newblock The space of embedded minimal surfaces of fixed genus in a
  $3$-manifold {I}{I}{I}; {P}lanar domains.
\newblock {\em Ann. of Math.}, 160:523--572, 2004.
\newblock MR2123932, Zbl 1076.53068.

\bibitem{cm23}
T.~H. Colding and W.~P. Minicozzi~II.
\newblock The space of embedded minimal surfaces of fixed genus in a
  $3$-manifold {I}{V}; {L}ocally simply-connected.
\newblock {\em Ann. of Math.}, 160:573--615, 2004.
\newblock MR2123933, Zbl 1076.53069.

\bibitem{cm35}
T.~H. Colding and W.~P. Minicozzi~II.
\newblock The {C}alabi-{Y}au conjectures for embedded surfaces.
\newblock {\em Ann. of Math.}, 167:211--243, 2008.
\newblock MR2373154, Zbl 1142.53012.

\bibitem{cosk1}
B.~Coskunuzer.
\newblock Non-properly embedded minimal planes in hyperbolic $3$-space.
\newblock {\em Comm. Contemp. Math.}, 13:727--739, 2011.
\newblock MR2847226, Zbl 1232.53010.

\bibitem{cp1}
M.~{do Carmo} and C.~K. Peng.
\newblock Stable complete minimal surfaces in $\rth$ are planes.
\newblock {\em Bulletin of the AMS}, 1:903--906, 1979.
\newblock MR0546314, Zbl 442.53013.

\bibitem{fi1}
D.~Fischer-{C}olbrie.
\newblock On complete minimal surfaces with finite {M}orse index in
  $3$-manifolds.
\newblock {\em Invent. Math.}, 82:121--132, 1985.
\newblock MR0808112, Zbl 0573.53038.

\bibitem{fs1}
D.~Fischer-{C}olbrie and R.~Schoen.
\newblock The structure of complete stable minimal surfaces in $3$-manifolds of
  nonnegative scalar curvature.
\newblock {\em Comm. on Pure and Appl. Math.}, 33:199--211, 1980.
\newblock MR0562550, Zbl 439.53060.

\bibitem{gom1}
M.~J. Gomes.
\newblock Spherical surfaces with constant mean curvature in hyperbolic space.
\newblock {\em Bol. Soc. Brasil. Mat.}, 18:49--73, 1987.
\newblock MR1018445, Zbl 0752.53034.

\bibitem{gri1}
A.~Grigor{'}yan.
\newblock Analytic and geometric background of recurrence and non-explosion of
  {B}rownian motion on {R}iemannian manifolds.
\newblock {\em Bull. of {A}.{M}.{S}}, 36(2):135--249, 1999.
\newblock MR1659871, Zbl 0927.58019.

\bibitem{gr1}
M.~Gr\"{u}ter.
\newblock Regularity of weak $h$-surfaces.
\newblock {\em J. Reine Angew. Math.}, 329:1--15, 1981.
\newblock MR0636440, Zbl 0461.53029.

\bibitem{gl1}
R.~Gulliver and H.~B. Lawson.
\newblock The structure of minimal hypersurfaces near a singularity.
\newblock {\em Proc. Symp. Pure Math.}, 44:213--237, 1986.
\newblock MR0840275, Zbl 0592.53005.

\bibitem{hl1}
R.~Harvey and B.~Lawson.
\newblock Extending minimal varieties.
\newblock {\em Invent. Math.}, 28:209--226, 1975.
\newblock MR0370319, Zbl 0316.49032.

\bibitem{hm10}
D.~Hoffman and W.~H. Meeks~III.
\newblock The strong halfspace theorem for minimal surfaces.
\newblock {\em Invent. Math.}, 101:373--377, 1990.
\newblock MR1062966, Zbl 722.53054.

\bibitem{mpr8}
W.~H. Meeks~III, J.~P\'{e}rez, and A.~Ros.
\newblock Bounds on the topology and index of classical minimal surfaces.
\newblock Preprint available at {\tt
  http://www.ugr.es/local/jperez/papers/papers.htm}.

\bibitem{mpr21}
W.~H. Meeks~III, J.~P\'{e}rez, and A.~Ros.
\newblock The classification of {C}{M}{C} foliations of $\mathbb{R}^3$ and
  $\mathbb{S}^3$ with countably many singularities.
\newblock Preprint.

\bibitem{mpr20}
W.~H. Meeks~III, J.~P\'{e}rez, and A.~Ros.
\newblock The {D}ynamics {T}heorem for properly embedded minimal surfaces.
\newblock Preprint.

\bibitem{mpr9}
W.~H. Meeks~III, J.~P\'{e}rez, and A.~Ros.
\newblock The embedded {C}alabi-{Y}au conjectures for finite genus.
\newblock Preprint available at {\tt
  http://www.ugr.es/local/jperez/papers/papers.htm}.

\bibitem{mpr14}
W.~H. Meeks~III, J.~P\'{e}rez, and A.~Ros.
\newblock The local picture theorem on the scale of topology.
\newblock Preprint available at {\tt
  http://www.ugr.es/local/jperez/papers/papers.htm}.

\bibitem{mpr11}
W.~H. Meeks~III, J.~P\'{e}rez, and A.~Ros.
\newblock Structure theorems for singular minimal laminations.
\newblock Preprint available at {\tt
  http://www.ugr.es/local/jperez/papers/papers.htm}.

\bibitem{mpr4}
W.~H. Meeks~III, J.~P\'{e}rez, and A.~Ros.
\newblock The geometry of minimal surfaces of finite genus {I}{I}; nonexistence
  of one limit end examples.
\newblock {\em Invent. Math.}, 158:323--341, 2004.
\newblock MR2096796, Zbl 1070.53003.

\bibitem{mpr19}
W.~H. Meeks~III, J.~P\'{e}rez, and A.~Ros.
\newblock Stable constant mean curvature surfaces.
\newblock In {\em Handbook of Geometrical Analysis}, volume~1, pages 301--380.
  International Press, edited by Lizhen Ji, Peter Li, Richard Schoen and Leon
  Simon, ISBN: 978-1-57146-130-8, 2008.
\newblock MR2483369, Zbl 1154.53009.

\bibitem{mpr18}
W.~H. Meeks~III, J.~P\'{e}rez, and A.~Ros.
\newblock Limit leaves of an {H} lamination are stable.
\newblock {\em J. Differential Geometry}, 84(1):179--189, 2010.
\newblock MR2629513, Zbl 1197.53037.

\bibitem{mr8}
W.~H. Meeks~III and H.~Rosenberg.
\newblock The uniqueness of the helicoid.
\newblock {\em Ann. of Math.}, 161:723--754, 2005.
\newblock MR2153399, Zbl 1102.53005.

\bibitem{mr13}
W.~H. Meeks~III and H.~Rosenberg.
\newblock The minimal lamination closure theorem.
\newblock {\em Duke Math. Journal}, 133(3):467--497, 2006.
\newblock MR2228460, Zbl 1098.53007.

\bibitem{my2}
W.~H. Meeks~III and S.~T. Yau.
\newblock The existence of embedded minimal surfaces and the problem of
  uniqueness.
\newblock {\em Math. Z.}, 179:151--168, 1982.
\newblock MR0645492, Zbl 0479.49026.

\bibitem{ner2}
B.~Nelli and H.~Rosenberg.
\newblock Minimal surfaces in {$\Bbb H\sp 2\times\Bbb R$}.
\newblock {\em Bull. Braz. Math. Soc. (N.S.)}, 33(2):263--292, 2002.
\newblock MR1940353, Zbl 1038.53011.

\bibitem{po1}
A.~V. Pogorelov.
\newblock On the stability of minimal surfaces.
\newblock {\em Soviet Math. Dokl.}, 24:274--276, 1981.
\newblock MR0630142, Zbl 0495.53005.

\bibitem{ros9}
A.~Ros.
\newblock One-sided complete stable minimal surfaces.
\newblock {\em Journal Differential Geometry}, 74:69--92, 2006.
\newblock MR2260928, Zbl 1110.53009.

\bibitem{rst1}
H.~Rosenberg, R.~Souam, and E.~Toubiana.
\newblock General curvature estimates for stable {$H$}-surfaces in
  $3$-manifolds and applications.
\newblock {\em J. Differential Geom.}, 84(3):623--648, 2010.
\newblock MR2669367, Zbl 1198.53062.

\bibitem{sc3}
R.~Schoen.
\newblock {\em Estimates for Stable Minimal Surfaces in Three Dimensional
  Manifolds}, volume 103 of {\em Ann. of Math. Studies}.
\newblock Princeton University Press, 1983.
\newblock MR0795231, Zbl 532.53042.

\bibitem{wh11}
B.~White.
\newblock The bridge principle for stable minimal surfaces.
\newblock {\em Calc. Var.}, 2:405--425, 1994.
\newblock MR1383916, Zbl 0830.49025.

\bibitem{wh12}
B.~White.
\newblock The bridge principle for unstable and for singular minimal surfaces.
\newblock {\em Comm. Anal. and Geom.}, 2(4):513--532, 1994.
\newblock MR1336893, Zbl 0860.58009.

\bibitem{xa5}
F.~Xavier.
\newblock Convex hulls of complete minimal surfaces.
\newblock {\em Math. Ann.}, 269:179--182, 1984.
\newblock MR0759107, Zbl 0528.53009.

\end{thebibliography}
\end{document}